      \theoremstyle{plain}
      \newtheorem*{theoremA}{Theorem A}
      \newtheorem*{theoremB}{Theorem B}
      \newtheorem*{theoremC}{Theorem C}
      \newtheorem*{corollaryD}{Corollory D}
      \newtheorem{theorem}{Theorem}[section]
      \newtheorem{lemma}[theorem]{Lemma}
      \newtheorem{proposition}[theorem]{Proposition}
      \theoremstyle{definition}
      \newtheorem{definition}[theorem]{Definition}
      \theoremstyle{remark}
      \newtheorem{remark}[theorem]{Remark}
\newcommand\norm[1]{\left\lVert#1\right\rVert}
\newcommand\sqnorm[1]{\left\lVert#1\right\rVert^2}
\newcommand\innerprod[2]{\ensuremath{\left\langle#1, #2\right\rangle}}
\newcommand\crosssection[1]{\ensuremath{\mathbf{\Gamma}\left(#1\right)}}
\newcommand\hess[1]{\operatorname{H}^{#1}}
\newcommand\ball[3][]{\ensuremath{B^{#1}_{#2}\left(#3\right)}}
\newcommand\pderivr{{\ensuremath{\partial_r}}}
\newcommand\pderivt{{\ensuremath{\partial_t}}}
\newcommand\algcurvop[1]{\ensuremath{\mathcal{C}_B(#1)}}
\newcommand\selfadjoint[1]{\ensuremath{S^2(#1)}}
\newcommand\intr[1]{#1^{\circ}}
\newcommand\ghost[1]{\phantom{\mathrel{#1}}}
\newcommand\rline[2]{\left.{#1}\right|_{#2}} 
\newcommand\set[2]{\ensuremath{\left\{ #1 \, | \, #2 \right\}}}
\newcommand\reals{\mathbb{R}}
\newcommand\complex{\mathbb{C}}
\newcommand\isocone{\ensuremath{C_{\text{iso} > 0}}}
\newcommand\piccone{\isocone}
\newcommand\vertproj[1]{#1^{\mathcal{V}}}
\newcommand\horzproj[1]{#1^{\mathcal{H}}}
\newcommand\vertdistr{\mathcal{V}}
\newcommand\horzdistr{\mathcal{H}}
\newcommand\nocompile[1]{}
\DeclareMathOperator\Hess{H}
\DeclareMathOperator\Scal{Scal}
\DeclareMathOperator\tr{tr}
\DeclareMathOperator\Ad{Ad}
\DeclareMathOperator\myspan{span}
\DeclareMathOperator\Or{O}
\DeclareMathOperator\sff{II}
\DeclareMathOperator\BigO{O}
      \def\@setcopyright{}
      \def\serieslogo@{}
\begin{document}

   \author{Sebastian Hoelzel}
   \address{Mathematisches Institut, WWU M\"unster, Germany}
   \email{sebastian.hoelzel@uni-muenster.de}
 
   \title[Surgery]{Surgery stable curvature conditions}

   \begin{abstract}
   	 We give a simple criterion for whether a pointwise curvature condition is stable under surgery. Namely, a curvature condition $C$, which is understood to be an 
   	 open, convex, $\Or(n)$-invariant cone in the space of algebraic curvature operators, is stable under surgeries of codimension at least $c$ provided it contains 
   	 the curvature operator corresponding to $S^{c-1} \times \reals^{n-c+1}$, $c \geq 3$. 
   	 
   	 This is used to generalize the well-known classification result of positive scalar curvature in the simply-connected case in the following way: Any 
   	 simply-connected manifold $M^n$, $n \geq 5$, which is either spin with vanishing $\alpha$-invariant or else is non-spin admits for any $\epsilon > 0$ a metric
   	 such that the curvature operator satisfies $R > - \epsilon \norm{R}$. 
   \end{abstract}


   \keywords{surgery, curvature}

   \thanks{The author was supported by SFB 878.}

   \date{\today}

\maketitle


\section{Introduction}
%
%

The aim of this work is to prove a sufficient criterium for a curvature condition to be stable under surgery and to exploit some of its consequences.

Given a 
smooth manifold $M^n$ with an $(n-k)$-dimensional sphere $S^{n-k}$ embedded with trivial normal bundle such that a tubular neighborhood of $S^{n-k}$ is diffeomorphic to $S^{n-k} \times D^k$, a \emph{surgery of codimension $k$} produces a new manifold via the following prescription:
\[
	\chi(M^n, S^{n-k}) := \Bigl[M^n \backslash (S^{n-k} \times D^k)\Bigr] \cup_{S^{n-k} \times S^{k-1}} \Bigl[ \overline{D}^{n-k+1} \times S^{k-1}\Bigr].
\]
\par
Consider the vector space $\algcurvop{\reals^n}$ of algebraic curvature operators satisfying the Bianchi identity. A subset $C \subset \algcurvop{\reals^n}$ will be called a \emph{curvature condition} if it is invariant under the natural $\Or(n)$-representation on $\algcurvop{\reals^n}$. We say that a Riemannian manifold $(M^n,g)$ \emph{satisfies $C$} provided for any linear isometry $\iota : \reals^n \to T_pM$ the pullback $\iota^*R(p) \in \algcurvop{\reals^n}$ of the curvature operator $R(p) \in \algcurvop{T_pM}$ of $(M,g)$ belongs to $C$. 
\par
This notion allows us to formulate 
\begin{theoremA} 
	Let $C \subset \algcurvop{\reals^n}$ be an open, convex $\Or(n)$-invariant cone with 
	\[
		R_{S^{c-1} \times \reals^{n-c+1}} \in C,
	\]
	for some $c \in \{3,\ldots, n \}$. Suppose $(M^n, g)$ is a Riemannian manifold satisfying $C$. Then a manifold 
	obtained from $M^n$ by performing surgery of codimension at least $c$ also admits a metric satisfying $C$.
\end{theoremA}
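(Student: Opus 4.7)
The plan is to adapt the classical Gromov--Lawson bent-tube surgery construction, tracking the full curvature operator rather than only scalar curvature and exploiting the convex-cone structure of $C$ throughout. A first step is to reduce to the case $k=c$, where $k$ denotes the codimension of the surgery. Given $k\geq c$, embed $\reals^{c-1}\subset\reals^{k-1}\subset\reals^n$ and let $\Or(k-1)\subset\Or(n)$ act on $\reals^{k-1}$ while fixing $\reals^{n-k+1}$. The average of the $\Or(k-1)$-orbit of $R_{S^{c-1}\times\reals^{n-c+1}}$ is supported on $\Lambda^2\reals^{k-1}$ and $\Or(k-1)$-invariant there; by Schur's lemma applied to the irreducible $\Or(k-1)$-representation on $\Lambda^2\reals^{k-1}$, this average is a positive multiple of $R_{S^{k-1}\times\reals^{n-k+1}}$. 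Convexity and $\Or(n)$-invariance of $C$ place the average in $C$, and the cone property then yields $R_{S^{k-1}\times\reals^{n-k+1}}\in C$. So we may assume $k=c$.

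Next, construct a bent-tube hypersurface. Work in $M\times\reals$ with the product metric. In Fermi coordinates $(x,v)$ on a tubular neighborhood of $S^{n-k}$ the metric is $g=g_{S^{n-k}}+g_{\text{eucl}}+O(|v|^2)$. Writing $v=r\theta$ in polar form, for a smooth unit-speed curve $\gamma(s)=(r(s),t(s))$ in the first quadrant of the $(r,t)$-plane consider
\[
H_\gamma := \{(x,r(s)\theta,t(s)) : x\in S^{n-k},\ s\in[0,L],\ \theta\in S^{k-1}\}\subset M\times\reals,
\]
with its induced metric. By the Gauss equation, $R^{H_\gamma}=R^{M\times\reals}|_{TH_\gamma}+\sff\wedge\sff$. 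In the orthogonal splitting $TH_\gamma=TS^{n-k}\oplus\reals\partial_s\oplus TS^{k-1}$, the second fundamental form is diagonal: zero on $TS^{n-k}$, the geodesic curvature $\kappa(s)$ of $\gamma$ on $\partial_s$, and a scalar proportional to $t'(s)/r(s)$ on $TS^{k-1}$. The resulting $(t'/r)^2$ term of $\sff\wedge\sff$ on $\Lambda^2 TS^{k-1}$ is thus a positive multiple of a pullback of $R_{S^{k-1}\times\reals^{n-k+1}}$.

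Design $\gamma$ in three stages: (a) horizontal at $t\equiv 0$ with $r$ large, so that $H_\gamma$ agrees with $(M,g)$ outside the tube; (b) a slow quarter-turn from horizontal to vertical while $r$ decreases to a small $r_0$; (c) vertical at $r=r_0$, giving a cylindrical end $S^{n-k}\times S^{k-1}_{r_0}\times\reals$. Along (b), the leading part of $R^{H_\gamma}$ is a positive linear combination of a pullback of $R^M\in C$ and a pullback of $R_{S^{k-1}\times\reals^{n-k+1}}\in C$, hence in $C$ by convexity; the subleading contributions are controlled by $\kappa$ and by the $O(r^2)$ deviation of $g$ from the product. Since the bending occurs in the $\reals$-direction of $M\times\reals$, the arc length $L$ is unconstrained and $\kappa$ may be chosen pointwise arbitrarily small. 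Finally cap the cylindrical end by gluing in an explicit warped-product metric on $D^{n-k+1}\times S^{k-1}_{r_0}$ whose curvature is dominated for small $r_0$ by a large multiple of $R_{S^{k-1}\times\reals^{n-k+1}}\in C$.

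The principal difficulty I expect is quantifying stage (b): one must relate the pointwise smallness of $\kappa(s)$ required to keep $R^{H_\gamma}$ inside the open cone $C$ to the distance of $R^M(p)$ from $\partial C$, handle the sign-indefinite cross terms such as $\kappa\cdot t'(s)/r(s)$ appearing in $\sff\wedge\sff$, and ensure the bent tube matches smoothly with the cap-off. This should reduce to a uniform-continuity argument for $R\mapsto\text{dist}(R,\partial C)$ on a compact family of operators, combined with a careful decomposition of $\sff\wedge\sff$ so that every term is either a positive multiple of an $\Or(n)$-translate of $R_{S^{k-1}\times\reals^{n-k+1}}$ or a perturbation absorbed by the openness of $C$.
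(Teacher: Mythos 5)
Your strategy is essentially the one the paper follows: reduce to minimal codimension by averaging and convexity (the paper's Proposition 2.2), build the bent hypersurface in $M\times\reals$, decompose the curvature via the Gauss equation into a convex combination of $R^M$ and the tube curvature plus an error, and cap the cylindrical end. The only structural deviation is that the paper realizes the cap by applying the same deformation to $S^n$ and joining along a common cylindrical end (Theorem B) rather than inserting an explicit warped-product plug.

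Two quantitative expectations in your sketch need revision. First, the metric in Fermi coordinates about $S^{n-k}\subset M$ deviates from the product by $O(|v|)$, not $O(|v|^2)$, unless $S^{n-k}$ is totally geodesic; correspondingly, the second fundamental form of the bent tube is \emph{not} zero on $TS^{n-k}$ but carries an $O(\sin\theta)\cdot\sff_N^\nu$ contribution (the paper's Lemma \ref{lem:2}), which feeds $O(\sin^2\theta/r)$ cross terms into $\sff\wedge\sff$. These are subleading but not negligible without explicit estimation against the leading $\sin^2\theta/r^2$ piece (Proposition \ref{prop:1}). Second, the uniform-continuity-on-a-compact-family idea cannot close the bending step on its own: the family $\tilde{R}_D(\nu,s)$ is only compact once one knows the final tube radius is bounded away from zero, which is exactly what must be proved. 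What actually works is scale-invariance of the open cone: $\operatorname{dist}\bigl(\lambda R_{S^{k-1}\times\reals^{n-k+1}},\partial C\bigr)$ grows linearly in $\lambda$, so the $\sin^2\theta\cdot r^{-2} R_{S^{k-1}\times\reals^{n-k+1}}$ term dominates the $\kappa\sin\theta/r$ error precisely when $\kappa\lesssim\sin\theta/r$. Showing that this first-order differential inequality for $\theta$ admits a solution reaching $\pi/2$ in finite arclength while $r$ stays bounded below is the crux (the paper's Step 2, with geometrically scaled bending increments), and a compactness argument does not supply it. Finally, your plug requires the $S^{n-k}$-factor of the cylindrical end to carry a prescribed (say round) metric, which the induced tube metric does not give; a separate end-smoothing deformation is needed (the paper's Step 3, Lemma \ref{lem:step3}).
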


Here, $R_{S^{c-1} \times \reals^{n-c+1}} = \pi_{\bigwedge^2 \reals^{c-1}}: \bigwedge^2 \reals^n \to \bigwedge^2 \reals^n$ corresponds to the curvature operator of $S^{c-1} \times \reals^{n-c+1}$ equipped with its canoncial product metric. 
\par
We proceed with a couple of \emph{examples}, where the above theorem can be applied, which at the same time serve to illustrate the \emph{history} of surgery theorems in Riemannian geometry.
\begin{enumerate}
	\item \emph{Positive scalar curvature}. The set 
		\[
			C_{\Scal > 0} := \set{R \in \algcurvop{\reals^n}}{\tr(R) > 0}
		\]
		corresponds to the condition of positive scalar curvature in the usual sense. As it evidently contains $R_{S^d \times 
		\reals^{n-d}}$, if $d \geq 2$, Theorem A 
		yields stability of positive scalar curvature under surgery of codimension $\geq 3$, which 
		was first obtained by Gromov and Lawson and, independently, Schoen and Yau (see \cite{GromovLawson1} and \cite{SchoenYau1}, respectively). More 
		precisely, the former proved the surgery stability for $\Scal > 0$ as stated in Theorem A, 
		whereas the latter deduced a slightly more general result as 
		formulated in Theorem B 
		below concerning this specific condition by the use of some singular partial differential equations.
		
	\item \emph{Positive isotropic curvature}.
		Let $\pi \subset \reals^n \otimes \complex = \complex^n$  be a complex plane. The complex sectional curvature of $\pi$ of an 
		algebraic curvature tensor $R \in \algcurvop{\reals^n}$ is given by
		\[
			\sec(R)(\pi) := R_{\complex}(b_1, b_2, \overline{b_2}, \overline{b_1}),
		\]
		where $\{ b_1, b_2 \} \subset \pi$ is a unitary basis of $\pi$ and $R_{\complex}$ denotes the complex quadrilinear extension 
		of $R$. $\pi$ is called \textsl{isotropic}, if for all $v \in \pi$ 
		we have $\left(g_{\reals^n}\right)_{\complex}(v, v) = 0$, where $\left(g_{\reals^n}\right)_{\complex}$ denotes the complex 
		bilinear extension of $g_{\reals^n}$. \\
		Given a complex isotropic plane $\pi \subset \complex^n$, it is not hard to see that there exist orthonormal vectors
		$e_1, \ldots, e_4 \in \reals^{2n}$ such that $\pi = \myspan_{\complex} \{ e_1 + i e_2, e_3 + i e_4 \}$. Then, using
		the shortcut notation $R_{ijkl} := R(e_i, e_j, e_k, e_l)$, we get
		\[
			\sec(R)(\pi) = R_{1331} + R_{1441} + R_{2332} + R_{2442} - 2 R_{1234}.
		\]
		It readily follows that $R_{S^{n-1} \times \reals}$ is contained in the set
		\[
			\ghost{.........}\quad\piccone := \set{R \in \algcurvop{\reals^n}}{ \sec(R)(\pi) > 0 \text{ for all isotropic planes } \pi \subset \complex^n}
		\]
		of operators with positive isotropic curvature. Thus Theorem A 
		states the stability of the class of Riemannian manifolds with positive
		isotropic curvature under connected sum constructions, which recovers a theorem proved by Micallef and Wang (\cite{MicallefWang}) in 1993.

	\item \emph{Positive $p$-curvature}.	Consider
		\[
			C_{p > 0} := \set{R \in \algcurvop{\reals^n}}{ s_p(R)(P) > 0 \text{ for any $p$-plane } P \subset \reals^n },
		\]
		where $s_p(R)(P) := \sum_{j,k = p+1}^n R(e_j, e_k, e_k, e_j)$, with $e_{p+1}, \ldots, e_n$ being an orthonormal basis of $P^{\bot}$, is called the 
		\emph{$p$-curvature} of the plane $P$ with respect to the operator $R$. This is an open, convex, $\Or(n)$-invariant cone, and $R_{S^{d} \times \reals^{n-d}} 
		\in C_{p > 0}$, if and only if $d \geq p +2$, for this implies $\dim \left(\reals^{d} \cap P^{\bot}\right) \geq 2$. Thus Theorem A 
		gives 
		stability under surgery of codimension $\geq p+3$  for $C_{p \geq 0}$, which was proved by Labbi in 1997 (see \cite{Labbi1}) using the 
		construction method employed in \cite{GromovLawson1}.
		
	\item \emph{Pointwise almost nonnegative sectional curvature}.
		In a quite similar fashion, Sung proved in 2004 (see \cite{Sung}) that, for $\epsilon > 0$, the condition given by
		\[
			\tilde{C}_{\epsilon} := \set{ R \in \algcurvop{\reals^n}}{ \sec(R) > - \epsilon \Scal(R) }
		\]
		enjoys stability under surgery of codimension $\geq 3$, which is covered by Theorem A, 
		as obviously $R_{S^2\times\reals^{n-2}} \in 
		\tilde{C}_{\epsilon}$.
		\par $\tilde{C}_{\epsilon}$ contains the cone of curvature operators with nonnegative sectional curvature and converges (in the pointed Gromov-Hausdorff sense) to 
		this cone for $\epsilon \to 0$. 
		Therefore the family $\tilde{C}_{\epsilon}$ might reasonably be considered a condition of \emph{pointwise almost nonnegative sectional 
		curvature}.

	\item \emph{Positive $S$-curvature}.
		By identifying $\bigwedge^2 \reals^n$ with $\mathfrak{so}(n)$ and complexifying the latter to get $\mathfrak{so}(n, \complex)$, we can regard an operator  $R \in 
		\algcurvop{\reals^n}$ as an operator $R_{\complex} : \mathfrak{so}(n, \complex) \to \mathfrak{so}(n, \complex)$ through complex linear extension. For an 
		$\Ad_{\operatorname{SO}(n, \complex)}$-invariant subset $S \subset \mathfrak{so}(n, \complex)$ the definition 
		\[
			C(S) := \set{R \in \algcurvop{\reals^n}}{ \innerprod{R_{\complex}(X)}{\overline{X}}_{\complex} > 0 \text{ for all } X \in S}
		\]
		yields an open, convex $\Or(n)$-invariant cone (which moreover turns out to be Ricci flow invariant; see \cite{Wilking}). Recently, it was proved in 
		\cite{GurMaiSeshadri} that $C(S)$ is stable under connected sum constructions, if $S$ does not contain any elements of the form $v \wedge w$, with $v \in 
		\reals^n$, $w \in \complex^n$ and $\left(g_{\reals^n}\right)_{\complex}(v,w) = 0$. Since this latter condition is seen to be equivalent to the requirement 
		$R_{S^{n-1} \times \reals} \in C(S)$, this case also is covered by Theorem A.\\
		Furthermore, it was shown in \cite{GurMaiSeshadri} that $C(S) \subset \piccone$ and that $C(S_0) = \piccone$ can be achieved by taking 
		\[
			S_0 = \set{ X \in \mathfrak{so}(n, \complex)}{ \operatorname{rk}(X) = 2 \text{ and } X^2 = 0},
		\]
		thus it is not surprising that the proof in \cite{GurMaiSeshadri} consists in a generalization of the proof given 
		in \cite{MicallefWang}.
\end{enumerate}
\vspace{3mm}
\par
Theorem A 
follows from a more general result that we explain next. To fix notation, let $C \subset \algcurvop{\reals^n}$ be an open (and non-empty) curvature condition. We say \emph{$C$ satisfies an inner cone condition} with respect to an operator $S \in \algcurvop{\reals^n} \backslash \{ 0 \}$ if the following holds: For every $R \in C$ there is a $\rho = \rho(R) > 0$, depending continuously on $R$, such that 
\[
	R + C_{\rho} := \set{R+T}{T \in C_{\rho}} \subset C,
\]
where $C_{\rho}$ is an open, convex, $\Or(n)$-invariant cone containing $\ball{\rho}{S}$.\\
Note that if $C \subset \algcurvop{\reals^n}$ is an open, $\Or(n)$-invariant convex cone and $S \in C$, then $C$ automatically satisfies an inner cone condition with respect to $S$.

\begin{theoremB}
	Let $C \subset \algcurvop{\reals^n}$ be a curvature condition satisfying an inner cone condition with respect to 
	$R_{S^{n-k-1} \times \reals^{k+1}}$, for some $k \in \{0,\ldots, n-3\}$. Suppose $(M_i^n, g_{M_i})$, $i =1,2$, are two $n$-dimensional Riemannian manifolds 
	satisfying $C$ and let $N_i^l \subset M_i^n$ be closed $l$-dimensional submanifolds of $M_i$, with $0 \leq l \leq k$. 
	\par
	If there is an isomorphism  $\phi : \nu N_1 \rightarrow \nu N_2$ of the normal bundles of $N_i$ in $M_i$, then the \emph{joining of $M_1$ and $M_2$ along $\phi$} 
	defined by
	\[
		M_1 \#_{\phi} M_2 := \left(M_1 \backslash \ball{\epsilon}{N_1}\right) \cup_{\overline{\phi}} \left(M_2 \backslash \ball{\epsilon}{N_2}\right)
	\]
	also carries a metric satisfying $C$.  
\end{theoremB}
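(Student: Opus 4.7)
\emph{Overall strategy.}
The plan is to carry out a parametrized Gromov--Lawson surgery construction in the normal bundle of each $N_i$, adapted to the general cone condition: one deforms the metric on a tubular neighborhood of $N_i$ so that near the boundary it becomes a Riemannian product with a small round $(n-l-1)$-sphere factor, and then identifies the two matching cylindrical ends via $\phi$. A preliminary step is to reduce to the effective case $l = k$. For $l < k$, the operator $R_{S^{n-l-1} \times \reals^{l+1}} = \pi_{\bigwedge^2 \reals^{n-l-1}}$ equals, up to a positive scalar, the average of the $\Or(n)$-rotates $\pi_{\bigwedge^2 W}$ of $R_{S^{n-k-1} \times \reals^{k+1}}$ as $W$ ranges over the $(n-k-1)$-dimensional subspaces of $\reals^{n-l-1}$: each $\pi_{\bigwedge^2 W}$ annihilates the orthogonal complement of $\bigwedge^2 \reals^{n-l-1}$, and by $\Or(n-l-1)$-invariance the average acts as a scalar on $\bigwedge^2 \reals^{n-l-1}$. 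Openness, convexity and $\Or(n)$-invariance of the inner cone $C_\rho$ then yield an inner cone condition (with a possibly smaller radius) with respect to $R_{S^{n-l-1} \times \reals^{l+1}}$, which becomes the effective model operator.

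\emph{Deformation and curvature.}
Identify a neighborhood of $N_i$ with the $\epsilon$-disk bundle of $\nu N_i$ via the normal exponential map. In Fermi coordinates the metric reads
\[
    g_{M_i} = g_{N_i} + dr^2 + r^2\, g_{S^{n-l-1}} + \mathrm{O}(r^2),
\]
the error absorbing Jacobi-field corrections from the ambient curvature and the second fundamental form of $N_i$. Replace $g_{M_i}$ by the warped submersion metric
\[
    \tilde g = g_{N_i} + dr^2 + f(r)^2\, g_{S^{n-l-1}},
\]
where $f \colon [0, \epsilon] \to \reals_+$ is a smooth concave Gromov--Lawson profile with $f(r) = r$ near $r = 0$ and $f(r) \equiv r_0$ for $r \geq r_1$, producing a cylindrical neck. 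A warped-product calculation decomposes the curvature operator of $\tilde g$ as $\tilde R = R_{M_i} + R_{\mathrm{bend}} + \mathrm{err}$, where $R_{\mathrm{bend}}$ is a nonnegative operator concentrated in the fiber and radial--fiber directions, driven by $-f''/f$ and $(1 - (f')^2)/f^2$. By a suitable choice of $f$ with $r_0,r_1$ small, $R_{\mathrm{bend}}$ is a large positive multiple of an operator arbitrarily close to $R_{S^{n-l-1} \times \reals^{l+1}}$ and therefore lies in $C_\rho$; the inner cone condition then gives $\tilde R \in R_{M_i} + C_\rho \subset C$ pointwise.

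\emph{Gluing and main obstacle.}
Performing the symmetric construction on both $M_1$ and $M_2$ yields isometric cylindrical necks $N_i \times S^{n-l-1}(r_0) \times [0,L]$, which $\phi$ identifies into a smooth metric on $M_1 \#_\phi M_2$ satisfying $C$. The main technical obstacle is curvature control on the transition region when $l>0$: the warped-product computation on a nontrivial disk bundle generates additional O'Neill-type contributions involving the second fundamental form $\sff_{N_i}$ and the curvature $R^{\nu N_i}$ of the normal bundle, pushing $R_{\mathrm{bend}}$ off its ideal model. These perturbations are bounded uniformly on the compact $N_i$, while the bending contribution scales like $r_0^{-2}$; choosing $r_0$ and the transition width small enough to dominate the corrections, and exploiting the continuity of $\rho(\cdot)$ from the inner cone condition over the compact set of curvatures arising along $N_i$, keeps the perturbed bending in $C_\rho$ and completes the proof.
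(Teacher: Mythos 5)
Your strategy is the same Gromov--Lawson bending that the paper uses, and the averaging reduction from $l<k$ to the model $R_{S^{n-l-1}\times\reals^{l+1}}$ is precisely the content of the paper's Proposition \ref{prop:11}. But as written, the deformation step has several concrete gaps.

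First, the ansatz $\tilde g = g_{N_i}+dr^2+f(r)^2 g_{S^{n-l-1}}$ is a genuine global product and so only makes sense when $\nu N_i$ is trivial. For general $\nu N_i$ the correct model is a \emph{connection metric} $h$ on the sphere bundle $\nu^{r}N_i$ (determined by $g_{N_i}$, a fibre metric $g_{\nu N_i}$, and a metric connection $\nabla$), and the cylindrical ends must then be $(\nu^{r}N_i\times(0,\infty),\rline{h}{\nu^{r}N_i}+dt^2)$, which are matched by $\phi$ precisely because $\phi$ is a bundle isomorphism and the data $(g_{N_i},g_{\nu N_i},\nabla)$ can be pulled back along it. Your ``$N_i\times S^{n-l-1}(r_0)\times[0,L]$'' is only the special case of a trivial normal bundle, and the argument for why $\phi$ glues the two metrics is missing.

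Second, the profile conditions are reversed: $f(r)=r$ near $r=0$ with $f\equiv r_0$ for $r\geq r_1$ keeps $N_i$ in the manifold and makes the metric fail to match $g_{M_i}$ at $r=\epsilon$. The cylinder has to emerge near the deleted $N_i$, while the metric must coincide with $g_{M_i}$ on the outer rim.

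Third, and most substantively, the claim that $R_{\mathrm{bend}}$ ``is a large positive multiple of an operator arbitrarily close to $R_{S^{n-l-1}\times\reals^{l+1}}$'' is not automatic: the fibre--fibre terms scale like $(1-(f')^2)/f^2$ whereas the radial--fibre terms scale like $-f''/f$, and the latter can dominate if the bending is fast. The key quantitative point the paper establishes (inequality \eqref{eq_1}, equivalently a bound on $\theta'$ in terms of $\rho$, $\sin\theta$ and $r$) is a slow-bending constraint guaranteeing that the direction of the bending operator stays inside the inner cone $C_\rho$; your proposal asserts the conclusion without deriving this constraint. Related to this, the paper's step 3 is a separate smoothing that interpolates from the induced tube metric $g_{T(r)}$ to the connection metric $h$ (via Proposition \ref{prop:2}) before the gluing; this is needed because the two manifolds a priori produce non-isometric necks, and it is absent from your outline.

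In short, the route is the right one and essentially identical to the paper's (a Gromov--Lawson graph deformation plus the averaging reduction), but as stated the proposal lacks the connection-metric formulation of the neck, has the profile reversed, and omits the slow-bending estimate and the final smoothing step that make the quantitative curvature control and the actual gluing go through.
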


Here, $\overline{\phi}$ is given by $\exp \circ \phi \circ \left(\rline{\exp}{\nu^{\epsilon}N_1} \right)^{-1} :  
\partial\ball{\epsilon}{N_1} \stackrel{\approx}{\rightarrow} \partial\ball{\epsilon}{N_2}$ and $\epsilon > 0$ is meant to be chosen small enough such that the normal 
exponential mappings $\exp : \nu^{<2\epsilon} N_i \to \ball{2\epsilon}{N_i}$ are diffeomorphisms, $i = 1,2$, where $\nu^{<r} N_i := \set{\nu \in \nu N_i}{\norm{\nu} < r}$.
\par
This indeed implies Theorem A
, for $\chi(M^n, S^{n-k}) = M^n \sharp_{\phi} S^n$, with $\phi$ being the obvious isomorphism of the normal bundle of $S^{n-k} \subset M^n$ to the normal bundle of $S^{n-k} = S^n \cap \left(\reals^{n-k+1}\times\{0\}^{k}\right)\subset S^{n}$.
\par
Moreover, there are analogous results in the equivariant and conformally flat cases. These are outlined in sections \ref{sec:equivariant} and \ref{sec:conformallyflat}, respectively.
\vspace{3mm}
\par
Using the Ricci flow, B\"ohm and Wilking \cite{BoehmWilking2} proved that any Riemannian manifold with positive curvature operator is diffeomorphic to a spherical space form. Together with the work of Gallot and Meyer (cf. \cite{Petersen}) this yields a complete understanding of manifolds with nonnegative curvature operator. More precisely, a closed, simply connected Riemannian manifold with nonnegative curvature operator consists of a Riemannian product of manifolds which either are diffeomorphic to spheres, isometric to compact symmetric spaces or are K\"ahler manifolds biholomorphic to complex projective spaces. In particular, the class of closed, simply connected manifolds of a given dimension admitting a metric with nonnegative curvature operator consists of finitely many diffeomorphism types.
\par
In contrast to this, this rigidity result breaks down completely as soon as one tries to relax this curvature condition in the sense of

\begin{theoremC}
	Let $C \subset \algcurvop{\reals^n}$ be a curvature condition such that $C$ satisfies an inner cone condition with 	
	respect to any nonzero curvature operator with nonnegative eigenvalues (for instance, this holds, if $C$ is an open convex $\Or(n)$-invariant cone with $\{R \geq 0 
	\} \backslash \{ 0 \} \subset C$).\\ Suppose $M^n$, $n \geq 5$, is a closed, simply connected manifold. Then $M$ can be endowed 
	with a metric satisfying $C$, if either $M$ is non-spin, or $M$ is spin and $\alpha(M) = 0$.
\end{theoremC}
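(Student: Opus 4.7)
My strategy is to adapt the Gromov--Lawson--Stolz surgery-theoretic scheme for positive scalar curvature, with Theorem~B in the role of the Gromov--Lawson surgery theorem. Each sphere operator $R_{S^{c-1}\times\reals^{n-c+1}}$, $c\geq 3$, has nonnegative eigenvalues, so the hypothesis of Theorem~C supplies an inner cone condition of $C$ with respect to each of them, and Theorem~B yields the stability of $C$ under codimension-$\geq 3$ surgery. The standard surgery-theoretic consequence of the $h$-cobordism theorem then reduces the problem to exhibiting, in each oriented (resp.\ spin) bordism class, a single closed simply connected representative of dimension $n\geq 5$ admitting a metric in $C$: any two such representatives are related by codimension-$\geq 3$ surgeries, since surgeries in this codimension range preserve simple connectivity.

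In the non-spin case, each class in $\Omega^{\operatorname{SO}}_n$ has a representative among compact symmetric spaces of compact type---for example, products of $\mathbb{CP}^{2k}$ for the free part and Wu-type manifolds $\operatorname{SU}(3)/\operatorname{SO}(3)$ (and their products) for the 2-torsion. Symmetric spaces of compact type have nonnegative curvature operator, and the representatives chosen have it nowhere zero, so in the main case $\{R\geq 0\}\setminus\{0\}\subset C$ they satisfy $C$ pointwise; in the general setting the same conclusion follows by applying the inner cone condition pointwise to the (nonzero, nonnegative) values of the curvature operator. For the spin case with $\alpha(M)=0$, Stolz's theorem replaces $M$, up to spin bordism, by a disjoint union of total spaces $E\to B$ of fiber bundles with fiber $\operatorname{HP}^2$ and structure group $\operatorname{PSp}(3)$ over spin bases. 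On each such $E$ I would place a Riemannian submersion metric $g_s = s^2\pi^*g_B + g_F$ with totally geodesic fibers isometric to the canonical symmetric metric $g_F$ on $\operatorname{HP}^2$, the base rescaled by a large parameter $s>0$.

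The main obstacle is verifying that $g_s$ actually satisfies $C$. Since $R_{g_F}$ is nonnegative ($\operatorname{HP}^2$ being of compact symmetric type), the O'Neill formulas give that $R_{g_s}$ converges, uniformly over $E$ as $s\to\infty$, to the algebraic curvature operator $R_V^F$ supported on vertical 2-forms $\bigwedge^2\mathcal{V}\subset\bigwedge^2 T_pE$ and equal there to $R_{g_F}$; the horizontal and mixed residual components are of order $s^{-2}$. As $R_V^F$ is nonzero with nonnegative eigenvalues, it lies in $C$ in the main case, and then openness of $C$ places $R_{g_s}$ in $C$ for all sufficiently large $s$; in the general setting one instead applies the inner cone condition at $R_V^F$ to absorb the bounded residual terms of $R_{g_s}-R_V^F$ into the inner cone $C_\rho$. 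Compactness of $E$ together with the $\operatorname{PSp}(3)$-invariance of the construction make all estimates uniform over $E$, completing the proof.
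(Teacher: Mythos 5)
Your overall architecture is the one the paper follows: reduce via the Gromov--Lawson surgery scheme (Theorem~B applied to the operators $R_{S^{c-1}\times\reals^{n-c+1}}$, $c\geq 3$, which indeed have nonnegative eigenvalues) to producing representatives in each bordism class, use Stolz's $\mathbb{HP}^2$-bundle theorem in the spin case, and make the $\mathbb{HP}^2$-bundle total spaces satisfy $C$ by shrinking the fibers of a Riemannian submersion. Your submersion estimate is conceptually the same as the paper's Theorem~\ref{thm:6}; you apply the inner cone condition at $R_{F\times\reals^{n-k}}$ to absorb the $A$- and $T$-tensor residual terms, exactly as in the paper. (Incidentally, the mixed $v$--$h$ residuals decay at rate $s^{-1}$, not $s^{-2}$, after normalizing; this is harmless for the argument.)

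The gap is in the non-spin case. You assert that every class in $\Omega^{\operatorname{SO}}_n$ is represented by a compact symmetric space of compact type, giving products of $\mathbb{CP}^{2k}$ for the free part and products of Wu manifolds $\operatorname{SU}(3)/\operatorname{SO}(3)$ for the torsion. This list is not correct, and neither the stronger symmetric-space claim nor the proposed generators are substantiated. Integrally, $\Omega^{\operatorname{SO}}_*/\mathrm{Tor}$ is a polynomial ring whose generator $y_{4i}$ must have $s$-number $\pm p$ when $2i+1 = p^k$ with $k>1$ (Milnor); since $s_i[\mathbb{CP}^{2i}] = 2i+1 = p^k$, the class $[\mathbb{CP}^{2i}]$ fails to be a polynomial generator already for $\mathbb{CP}^8$ in dimension $16$, and Milnor hypersurfaces $H_{k,m}$ (degree-$(1,1)$ hypersurfaces in $\mathbb{CP}^k\times\mathbb{CP}^m$) must enter. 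These are not symmetric spaces; the paper treats them instead as $\mathbb{CP}^{m-1}$-bundles over $\mathbb{CP}^k$ and invokes the same vertical-rescaling lemma you use in the spin case. Likewise the $2$-torsion of $\Omega^{\operatorname{SO}}_*$ is not exhausted by products of Wu manifolds; the paper uses Dold manifolds $D_{k,m} = (S^k\times\mathbb{CP}^m)/\mathbb{Z}_2$ and the associated $V$-manifolds of Gromov--Lawson, which satisfy $C$ because their local curvature operator is a product operator $\bigl(\bigoplus_j R_{D_{k_j,m_j}}\bigr)\oplus 0$, hence nonzero with nonnegative eigenvalues at every point. So the non-spin argument needs to be rebuilt on the Gromov--Lawson generating set, with the Milnor manifolds handled through your (correct) submersion lemma and the Dold-type generators through the pointwise inner cone hypothesis; as stated, the representatives you propose do not exist in the claimed generality. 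A secondary omission: the inner cone hypothesis does not by itself place the unrescaled round sphere or $\mathbb{CP}^k$ inside $C$; one has to note, as the paper does, that the hypothesis gives $\lambda R \in C$ for all large $\lambda$, so a sufficiently small rescaling of each model manifold lies in $C$.
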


Here, in the spin-case, the mapping $M \mapsto \alpha(M)$ is a homomorphism $\Omega^{\text{Spin}}_* \to KO^{-*}(\text{pt})$, which coincides with a multiple of the $\hat{A}$-genus in dimensions $4k$.\\
This result was known in the case of positive scalar curvature by the combined work of Gromov-Lawson and Stolz (see \cite{GromovLawson1}, \cite{Stolz}). By using the same methods, Sung proved Theorem C for the special case of the conditions $\tilde{C}_{\epsilon}$ mentioned above (see \cite{Sung}). In fact, we show that these methods apply to the more general situation of Theorem C, where we employ a slightly generalized version regarding the vertical rescaling of a Riemannian submersion.
\par A particular case of Theorem C 
may be noted explicitly as

\begin{corollaryD}
	Let $M^n$ be as in Theorem C. 
	Then for any $\epsilon > 0$ there exists a metric $g_{\epsilon}$ on $M$ such that the curvature operator $R = 
	R_{(M^n, g_{\epsilon})}$ fulfills
	\[
		R > - \epsilon \norm{R}.
	\] 
	Here, $\norm{R}$ denotes the operator norm of $R$.
\end{corollaryD}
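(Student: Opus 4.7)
The plan is to exhibit a curvature condition satisfying the hypotheses of Theorem~C whose defining inequality automatically implies the pinching $R > -\epsilon\norm{R}$, and then to invoke Theorem~C directly. Set $N := \dim \bigwedge^2 \reals^n = \tfrac{n(n-1)}{2}$ and let $\id$ denote the identity operator on $\bigwedge^2 \reals^n$. Modelled on the positive scalar curvature cone, the natural candidate is
\[
	C_\epsilon := \set{R \in \algcurvop{\reals^n}}{R + \tfrac{\epsilon}{N}\tr(R) \cdot \id > 0}.
\]

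First I would verify the structural properties of $C_\epsilon$. Openness follows from the continuity of the map $R \mapsto R + \tfrac{\epsilon}{N}\tr(R)\id$ together with the openness of the positive-definite cone; convexity is immediate since the defining condition is the preimage of the convex set of positive operators under a linear map; homogeneity under positive scaling makes $C_\epsilon$ a cone; and $\Or(n)$-invariance follows because $\tr$ and $\id$ are fixed by the $\Or(n)$-action, which acts by conjugation on $\bigwedge^2 \reals^n$ and hence preserves positive-definiteness. Moreover, for any nonzero $R \geq 0$ one has $\tr(R) > 0$, whence $R + \tfrac{\epsilon}{N}\tr(R)\id > 0$, so $\{R \geq 0\} \backslash \{0\} \subset C_\epsilon$. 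This is exactly the sufficient condition stated parenthetically in Theorem~C, so Theorem~C furnishes a metric $g_\epsilon$ on $M$ satisfying $C_\epsilon$.

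It then remains to read off the operator-norm bound from $R \in C_\epsilon$. Taking traces in $R + \tfrac{\epsilon}{N}\tr(R)\id > 0$ yields $(1+\epsilon)\tr(R) > 0$, so $\tr(R) > 0$; consequently $\tr(R) = \sum_i \lambda_i(R) \leq N\lambda_{\max}(R) \leq N\norm{R}$. The defining inequality of $C_\epsilon$ then reads $\lambda_{\min}(R) > -\tfrac{\epsilon}{N}\tr(R) \geq -\epsilon\norm{R}$, which is precisely $R > -\epsilon\norm{R}$.

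The only mildly delicate point in this approach is calibrating the scaling $\epsilon/N$ in the definition of $C_\epsilon$, which is chosen so that the gap between $\tr(R)$ and the operator norm $\norm{R}$ is absorbed exactly; once that factor is in place, the remaining verifications are routine, and the genuine difficulty of the corollary has been compressed entirely into Theorem~C.
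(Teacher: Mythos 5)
Your proof is correct, and it takes a genuinely different and arguably more streamlined route than the paper. The paper works directly with the nonconvex condition $C_\epsilon = \{ R : R > -\epsilon\norm{R}\}$ and proves, in a separate proposition, that this condition satisfies an inner cone condition with respect to every nonzero $S \geq 0$; this requires a somewhat delicate two-regime estimate on a perturbed eigenvalue function. You sidestep that entirely by introducing the auxiliary \emph{convex} cone
\[
  C'_\epsilon := \set{R \in \algcurvop{\reals^n}}{R + \tfrac{\epsilon}{N}\tr(R)\,\id > 0}, \qquad N := \dim\bigwedge^2\reals^n,
\]
which visibly contains $\{R \geq 0\}\setminus\{0\}$ and so satisfies the hypotheses of Theorem~C via the parenthetical remark there; the elementary chain $\tr(R) > 0$, $\tr(R) \leq N\norm{R}$, $\lambda_{\min}(R) > -\tfrac{\epsilon}{N}\tr(R) \geq -\epsilon\norm{R}$ then shows $C'_\epsilon \subset C_\epsilon$, so the metric furnished by Theorem~C satisfies the desired pinching. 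The trade-off: the paper's argument establishes that the scale-invariant, natural condition $R > -\epsilon\norm{R}$ is \emph{itself} surgery-stable (which has independent interest and is stated as such), whereas your argument only concludes the existence statement of Corollary~D, by producing a metric in the strictly smaller convex cone $C'_\epsilon$. For the stated corollary, your approach is simpler and equally conclusive.
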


The paper is organized as follows. Theorems A and B are proved in section \ref{sec:mainproof}. Section \ref{sec:subm} is devoted to a simple submersion lemma which is used in section \ref{sec:class} to prove Theorem C. The last two sections deal with extensions of the surgery theorem to the equivariant and conformally flat cases, respectively.

\vspace{3mm}
\emph{Acknowledgements:} The author would like to thank Burkhard Wilking for his support during the preparation of this work, which contains the results of the author's Ph.D. thesis.


\section{Proofs of Theorem A and Theorem B}
\label{sec:mainproof}

The following theorem captures the main deformation procedure behind Theorem A and Theorem B. 

\begin{theorem}
	\label{thm:12}
	Let $C \subset \algcurvop{\reals^n}$ be a curvature condition satisfying an inner cone condition with respect to 
	$R_{S^{n-k-1} \times \reals^{k+1}}$. Let $(M^n, g_M)$ be a Riemannian manifold satisfying $C$ and suppose $N^k \subset M^n$ is a closed submanifold.
	Let $g_N$ be an arbitrary metric on $N^k$, $g_{\nu N}$ a vector bundle metric on $\nu N$ and $\nabla$ a connection on
	$\nu N$ being metric with respect to $g_{\nu N}$.
	\par
	Then for $\overline{r} > 0$ there is $\underline{r} \in (0, \overline{r})$ such that for every $r \in (0, \underline{r})$ there exists a complete metric $g_D$ on the 
	open manifold $D := M \backslash N$ with the following properties:
	\begin{enumerate}
		\item $g_D$ satisfies $C$.
		\item $g_D$ coincides with $g_M$ on $M \backslash D(\overline{r})$, where $D(\overline{r}) = \set{x \in M}{d_{g_M}(x, N) < \overline{r}}$
		\item In a neighborhood $U \subset M$ of $N$ the region $(U \backslash N, g_D)$ is isometric to 
			\[
				\left(\nu^rN \times (0, \infty), \rline{h}{\nu^rN} + dt^2 \right),
			\]
			where $\nu^rN := \set{v \in \nu N}{g_{\nu N}(v,v) = r^2}$ and $h$ is the connection metric determined by $g_N$, $g_{\nu N}$ and $\nabla$.
	\end{enumerate}
\end{theorem}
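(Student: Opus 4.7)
The plan is to adapt the Gromov--Lawson rotational surgery construction to the normal bundle of an arbitrary closed submanifold. First I would use the normal exponential map to identify a tubular neighborhood $U$ of $N$ in $M$ with the disk subbundle $\nu^{<2\bar{r}} N$ (shrinking $\bar r$ if necessary so that $\exp$ is a diffeomorphism). Writing $h$ for the connection metric on $\nu N$ determined by $(g_N, g_{\nu N}, \nabla)$, one has $g_M = h + O(|v|)$ on $U$, and in particular $R_{g_M}$ stays in a compact subset of $C$ as a function on $U$.

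Next I would view $(M, g_M)$ isometrically as $M \times \{0\} \subset (M \times \reals, g_M \oplus dt^2)$ and construct an arc-length-parametrised smooth profile curve $\gamma(s) = (\tilde r(s), t(s))$ in the quarter-plane $\{\tilde r > 0,\, t \geq 0\}$ such that: (i) $\gamma(s) = (\bar r - s, 0)$ for $s$ near $0$, so that the construction glues smoothly onto $g_M$; (ii) the angle $\theta(s)$ between $\gamma'(s)$ and $-\partial_{\tilde r}$ increases monotonically from $0$ to $\pi/2$; (iii) $\gamma(s) = (r, s - s_1)$ for some $s_1 > 0$ and all large $s$, where $r$ is the prescribed cylinder radius. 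I would then rotate $\gamma$ fibrewise: inside each normal slice $\nu_x N \times \reals$, rotate around the $t$-axis to obtain a hypersurface $\tilde M \subset U \times \reals$, and glue $\tilde M$ onto $(M \setminus U) \times \{0\}$ along the boundary $\{s = 0\}$. Pulling back the induced metric from $M \times \reals$ yields $g_D$ on $D = M \setminus N$; properties (2) and (3) are then immediate, with the cylindrical region $\theta \equiv \pi/2$ providing the semi-infinite product end as $s \to \infty$.

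To verify (1) I would compute the curvature of $g_D$ via the Gauss equation applied to $\tilde M \hookrightarrow M \times \reals$. At a point over $x \in N$ at fibre radius $\tilde r(s)$, the second fundamental form of $\tilde M$ vanishes on $\gamma'$ (which lies in a totally geodesic $2$-plane), equals $\pm(\sin\theta(s)/\tilde r(s))\,\mathrm{id}$ on the spherical fibre directions in $T\nu^{\tilde r(s)}_x N$, and is of magnitude $\sin\theta(s)\cdot O(\tilde r(s))$ on the $N$-horizontal directions. The Gauss equation then yields the schematic decomposition
\[
    R_{g_D} = R_0 + \frac{\sin^2\theta(s)}{\tilde r(s)^2}\, R_{S^{n-k-1}\times \reals^{k+1}} + E,
\]
where $R_0$ differs from $R_{g_M}(x)$ by a term of order $O(\tilde r(s))$, while $\|E\| \leq C\bigl(1 + \theta'(s)/\tilde r(s)\bigr)$ with $C$ depending only on $(M, g_M)$, $N$ and the chosen connection data.

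Finally I would apply the inner cone condition at $S := R_{S^{n-k-1}\times \reals^{k+1}}$. By continuity of $\rho$ and compactness of the admissible range of $R_0$, there is a uniform $\rho_0 > 0$ and a single $\Or(n)$-invariant convex cone $C_{\rho_0} \supset \ball{\rho_0}{S}$ such that $R_0 + C_{\rho_0} \subset C$ for all relevant $R_0$. Using the cone property of $C_{\rho_0}$, the operator $(\sin^2\theta/\tilde r^2)\bigl(S + \tilde r^2 E/\sin^2\theta\bigr)$ lies in $C_{\rho_0}$ as soon as $\|\tilde r(s)^2 E/\sin^2\theta(s)\| < \rho_0$, so (1) reduces to keeping $\tilde r(s)^2/\sin^2\theta(s)$ and $\tilde r(s)\,\theta'(s)/\sin^2\theta(s)$ uniformly small on the transition region. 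The main obstacle is therefore the classical Gromov--Lawson bending lemma: one must choose $\theta(s)$ to grow very slowly over a long $s$-interval while $\tilde r(s)$ decreases from $\bar r$ to $r$, which is possible precisely when $r$ is sufficiently small, and this determines the threshold $\underline{r}$. Completeness of $g_D$ is automatic from the semi-infinite cylindrical end.
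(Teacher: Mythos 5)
Your overall strategy matches the paper's: bend $M$ inside $M\times\reals$ along a Gromov--Lawson profile curve, compute the curvature of the resulting hypersurface via the Gauss equation, and invoke the inner cone condition at $R_{S^{n-k-1}\times\reals^{k+1}}$. There are, however, two genuine gaps.

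First, the reduction of (1) to ``keeping $\tilde r^2/\sin^2\theta$ and $\tilde r\theta'/\sin^2\theta$ uniformly small'' cannot work near the start of the bend. At $\theta=0$ the leading term $(\sin^2\theta/\tilde r^2)\,R_{S^{n-k-1}\times\reals^{k+1}}$ vanishes identically, so the error $E$ has nothing to be dominated by, and any uniform bound of the form $\|\tilde r^2 E/\sin^2\theta\|<\rho_0$ is vacuously impossible there. This is not an artefact of your bookkeeping; it reflects the fact that the cone-shaped inner condition gives no slack in the direction of a vanishing term. The paper therefore splits the bending into two regimes: an initial step (from $\theta=0$ to a small $\theta_0>0$, keeping $r$ bounded away from zero) that relies only on the \emph{openness} of $C$ and the fact that the perturbation is small in absolute terms, and a second step (for $\theta\geq\theta_0$) where the cone argument and a differential inequality of the type $\theta'\lesssim\sin\theta/r$ take over. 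Relatedly, your bound $\|E\|\leq C(1+\theta'/\tilde r)$ loses the crucial $\sin\theta$ factor: the actual estimate coming from the Gauss equation is of the form $\cos\theta(1-\cos\theta)C_1+\theta'\sin\theta\,C_2/r$, which tends to $0$ as $\theta\to 0$. Without that factor, even the first step of the construction cannot be closed, since the error does not shrink as the bending angle does.

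Second, you assert that property (3) is immediate once the cylindrical region $\theta\equiv\pi/2$ is reached, but it is not. At that stage the induced metric on the end is $g_M|_{T(r)}+dt^2$, the tube metric from $g_M$, which is in general \emph{not} equal to the prescribed connection metric $h|_{\nu^rN}+dt^2$ determined by $(g_N,g_{\nu N},\nabla)$; they only agree to leading order as $r\to 0$. A third step is required: one must interpolate between $g_M|_{D(\bar r)}$ and $h$ along the $t$-direction of the cylinder and then verify, using a variant of the tube estimate (the analogue of Proposition~\ref{prop:2} for a manifold with totally geodesic boundary), that the interpolated cylinder still satisfies $C$ for $r$ small enough. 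This is where the second constraint $r^{**}$ on $\underline r$ comes from, and it is not free.

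Your description of the rotational construction, the identification of $T^v/T^h$ pieces of $\sff_D$, and the use of the cone property are all in line with the paper, so the proposal is on the right track; the two points above are what need to be supplied to make it a proof.
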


Theorems A and B 
follow immediately from Theorem \ref{thm:12} in conjunction with the following elementary property.

\begin{proposition}
	\label{prop:11}
	Suppose $C \subset \algcurvop{\reals^n}$ satisfies an inner cone condition with respect to $R_{S^{d} \times 
	\reals^{n-d}}$, $2 \leq d \leq n-1$. Then the same is true for $R_{S^{d+1} \times \reals^{n-d-1}}$.
\end{proposition}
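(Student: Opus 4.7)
The plan is to exhibit $R_{S^{d+1} \times \reals^{n-d-1}}$ as a positive multiple of a convex combination of $\Or(n)$-translates of $R_{S^d \times \reals^{n-d}}$, and then to use the $\Or(n)$-invariance, convexity, and scaling properties of the cone $C_\rho$ from the inner cone condition to transport a ball around the old center to a ball around the new one.

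Write $\pi_d := R_{S^d \times \reals^{n-d}}$, which is the orthogonal projection $\bigwedge^2 \reals^n \to \bigwedge^2 V$ with $V = \myspan(e_1, \ldots, e_d)$, and likewise $\pi_{d+1} := R_{S^{d+1} \times \reals^{n-d-1}}$. For $i = 1, \ldots, d+1$, let $V_i \subset \myspan(e_1, \ldots, e_{d+1})$ be the coordinate hyperplane orthogonal to $e_i$ in $\reals^{d+1}$, and write $\pi_{d,i}$ for the orthogonal projection onto $\bigwedge^2 V_i$. Since each $V_i$ is a $d$-dimensional subspace of $\reals^n$, there is $g_i \in \Or(n)$ with $\pi_{d,i} = g_i\,\pi_d\,g_i^{-1}$. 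Evaluating on basis bivectors $e_j \wedge e_k$ shows
\[
	\sum_{i=1}^{d+1} \pi_{d,i} \;=\; (d-1)\,\pi_{d+1},
\]
since each two-element subset of $\{1, \ldots, d+1\}$ avoids exactly $d-1$ indices (and both sides vanish as soon as $\max\{j,k\} > d+1$). Consequently
\[
	\pi_{d+1} \;=\; \tfrac{d+1}{d-1}\cdot\tfrac{1}{d+1}\sum_{i=1}^{d+1} g_i\,\pi_d\,g_i^{-1}
\]
is a positive scalar times a convex combination of $\Or(n)$-translates of $\pi_d$.

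Now let $R \in C$ and let the inner cone condition supply a continuously-varying $\rho(R) > 0$ and an open, convex, $\Or(n)$-invariant cone $C_{\rho(R)} \supset \ball{\rho(R)}{\pi_d}$ with $R + C_{\rho(R)} \subset C$. By $\Or(n)$-invariance, $\ball{\rho(R)}{\pi_{d,i}} \subset C_{\rho(R)}$ for every $i$. A direct Minkowski-sum computation in $\algcurvop{\reals^n}$ gives
\[
	\frac{1}{d+1}\sum_{i=1}^{d+1}\ball{\rho(R)}{\pi_{d,i}} \;=\; \ball{\rho(R)}{\tfrac{d-1}{d+1}\,\pi_{d+1}},
\]
which by convexity lies in $C_{\rho(R)}$. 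Since $C_{\rho(R)}$ is a cone, rescaling by the positive factor $\tfrac{d+1}{d-1}$ yields $\ball{\rho'(R)}{\pi_{d+1}} \subset C_{\rho(R)}$, where $\rho'(R) := \tfrac{d+1}{d-1}\rho(R)$ depends continuously on $R$. Setting $C'_{\rho'(R)} := C_{\rho(R)}$ then witnesses the inner cone condition for $C$ with respect to $\pi_{d+1}$.

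Essentially all the content of the proposition sits in the combinatorial identity $\sum_i \pi_{d,i} = (d-1)\,\pi_{d+1}$; once this decomposition is in hand, the rest is a formal unwinding of the definition of an inner cone condition. The hypothesis $d \geq 2$ is used only to make the scaling factor $(d+1)/(d-1)$ finite and positive, and $d+1 \leq n$ ensures that $\pi_{d+1}$ is an operator on $\bigwedge^2\reals^n$; so I expect no further obstacles.
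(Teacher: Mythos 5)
Your proof is correct, and it takes a genuinely different route from the paper's. The paper averages $R_{S^d\times\reals^{n-d}}$ over all of $\Or(d+1)$ with respect to Haar measure, identifies the resulting fixed point as a positive multiple of $R_{S^{d+1}\times\reals^{n-d-1}}$ via the irreducible decomposition of the $\Or(d+1)$-representation, then uses a separating-hyperplane argument to place that average in the convex hull of the orbit; finally, to pass from the convex-cone case to a general inner cone condition, it runs a compact exhaustion of $C$ and an auxiliary construction of a continuous radius function $\delta$. Your argument replaces the continuous average by the finite sum $\sum_{i=1}^{d+1}\pi_{d,i}=(d-1)\pi_{d+1}$ over coordinate hyperplanes of $\reals^{d+1}$, verified directly on basis bivectors, and the Minkowski-sum identity $\tfrac{1}{d+1}\sum_i\ball{\rho}{\pi_{d,i}}=\ball{\rho}{\tfrac{d-1}{d+1}\pi_{d+1}}$ then hands you the \emph{explicit} scaling constant $\rho'(R)=\tfrac{d+1}{d-1}\rho(R)$ and the same cone $C_{\rho(R)}$ as a witness. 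The explicitness is what buys you the most: continuity of $\rho'$ in $R$ is automatic, so the compact-exhaustion step in the paper becomes unnecessary, and no representation theory is invoked. The trade-off is that the paper's Haar-averaging argument is more robust in spirit (it would work for any invariant averaging situation without needing to find a specific combinatorial identity), whereas yours leans on the concrete form of the projections $\pi_d$, $\pi_{d+1}$; for this particular statement, that concreteness pays off cleanly.
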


\begin{proof}
	Suppose first that $C$ is an open, convex, $\Or(n)$-invariant cone with $R_d := R_{S^{d} \times \reals^{n-d}} \in C$. The orthogonal projection $\pi : 
	\reals^n \to \reals^{d+1}$ induces an inclusion $\algcurvop{\reals^{d + 1}} \subset \algcurvop{\reals^n}$. Now, $R_d = 
	\pi_{\bigwedge^2\reals^{d}}$ is contained in $\algcurvop{\reals^{d+1}}$, and so is $A \ast R_d$, if $A \in \Or(d+1) \subset \Or(n)$. This implies
	\[
		S := \int_{\Or(d+1)} A \ast R_d \, dm(A) \in \algcurvop{\reals^{d+1}}.
	\]
	Here, the standard Haar measure $m$ of $\Or(d+1)$ is used with the normalization $\int_{\Or(d+1)}dm = 1$. Because $S \in \algcurvop{\reals^{d+1}}$ is a fixed point 
	of the representation of $\Or(d+1)$, it follows easily that $S = \lambda R_{d+1}$ for some $\lambda > 0$, using the irreducible decomposition of this 
	representation. Furthermore, $S$ is contained in the convex hull $H$ of the orbit $\Or(d+1) \ast R_d$, because if $S \notin H$, we could find 
	an $A_0 \in H$ such that $d(A_0, S) = \inf_{A \in H} d(A, S) =: d(H, S) > 0$ due to the compactness of $H$. Convexity of $H$ implies then $\innerprod{S - 
	A_0}{A - A_0} \leq  0$ for each $A \in H$, which yields
	\[
		0 < \innerprod{S - A_0}{S - A_0} = \int_{\Or(d+1)} \innerprod{S - A_0}{A - A_0} \, dm(A) \leq 0,
	\]
	i.e. a contradiction. Hence convexity of $C$ gives us $\lambda R_{d+1} \in H \subset C$.
	\par
	Now, suppose $C$ merely satisfies an inner cone condition as in the statement. We can find a sequence of compact subsets $K_j$, with $K_j \subset \intr{K_{j+1}}$ and
	$C = \bigcup K_j$. By intersecting cones we get cones $C_j$ with $\ball{\rho_j}{R_d} \subset C_j$, where $\rho_j := \min_{R \in K_j} \rho(R)$, such that $R + 
	C_j \subset C$ for all $R \in K_j$. The above argument gives us numbers $\delta_j > 0$ with $\ball{\delta_j}{R_{d+1}} \subset C_j$ for each $j$. The function 
	\[
		\tilde{\delta}(R) := \max_{R \in K_j} \delta_j
	\]
	is positive on $C$ and it is easy to see that we can construct a continuous function
	\[
		\delta : C \to \reals^{>0}
	\]
	 with $\delta \leq \tilde{\delta}$ and $\ball{\delta(R)}{R + R_{d+1}} \subset R + C_j \subset C$ for all $R \in K_j \subset C$. This shows that $C$ satisfies an inner cone condition with 
	respect to $R_{d+1}$.
\end{proof}

We now turn to the proof of Theorem \ref{thm:12}.


\subsection{Setup and curvature formulas}

We are going to make use of a graph-like deformation procedure, which owes much to \cite{GromovLawson1}. The general setup is explained next.
\par
Without loss of generality $\overline{r} > 0$ can be assumed to be small enough that $\exp : \nu^{< 2\overline{r}} N \to M$ is an embedding. For some function $\theta : [0, \infty) \to \left[0, \frac{\pi}{2}\right]$, the mappings
\begin{equation}
	\label{eq:5}
		r(s) := \overline{r} - \int_0^s \cos \theta(u) \, du, \quad
		t(s) := \int_0^s \sin \theta(u) \, du,
\end{equation}
describe a curve $\gamma(s) := \left(r(s), t(s)\right)$ in the $(r,t)$-space, parametrized by arc length. The angle between the tangent $\gamma'(s)$ and $-\pderivr$ is then given by $\theta(s)$. The function $\theta(s)$ will be chosen in such a way that 
\begin{equation}
	\label{eq:10}
		\rline{t}{[0, \underline{s}]} \equiv 0 \text{ for some } \underline{s} > 0, \quad
		\rline{r'}{[\overline{s}, \infty)} \equiv 0 \text{ for some } \overline{s} > \underline{s} \text{ and } \quad r > 0
\end{equation}
hold.
\par
With the help of this 'model curve' we construct 
\[
	\gamma : \nu^{1}N \times \reals^{\geq 0} \to M \times \reals, \quad (\nu, s) \mapsto \left( \exp ( r(s) \nu ), t(s) \right),
\]
which we continue to call $\gamma$. Using $\gamma$ we deform the manifold $(M, g_M)$ to a new manifold $(D, g_D)$, defined by
\begin{equation}
	\label{eq:38}
	D := \set{\gamma(\nu, s)}{ \nu \in \nu^{1}N, s \geq 0} \cup \set{(p,0)\in M \times \reals}{d(p, N) \geq \overline{r}}
\end{equation}
and $g_D$ being the induced metric. Because of \eqref{eq:10}, $D$ will be indeed a smooth manifold diffeomorphic to $M \backslash N$.
\par
Our first task is to derive a useful formula for the curvature tensor of $D$. In order to do so, we aim at reexpressing the second fundamental form of $D$ in terms of known components.
\par
The derivative of $\gamma$ with respect to $s$ will be denoted by $\gamma'(\nu, s) = \frac{\partial \gamma}{\partial s}(\nu, s)$. Observe that
\begin{equation}
	\label{eq:6}
	\begin{split}
		\gamma'(\nu, s) &= d\exp (r(s)\nu)(r'(s)\nu) + t'(s) \rline{\pderivt}{\gamma(\nu, s)} \\
										&= r'(s)\rline{\pderivr}{\gamma(\nu, s)} + t'(s) \rline{\pderivt}{\gamma(\nu, s)} \\
										&= - \cos \theta(s) \rline{\pderivr}{\gamma(\nu, s)} + \sin \theta(s)\rline{\pderivt}{\gamma(\nu, s)}. 
	\end{split}
\end{equation}
We choose a local normal vector field $\mu(\nu, s)$ to $D$ by rotating $\gamma'(\nu, s)$ counterclockwise by $\frac{\pi}{2}$, thus getting
\begin{equation}
	\label{eq_3}
	\begin{split}
		\mu(\nu, s) &:= - r'(s) \rline{\pderivt}{\gamma(\nu, s)} + t'(s) \rline{\pderivr}{\gamma(\nu, s)}\\
				&\ghost{:}= \cos \theta(s) \rline{\pderivt}{\gamma(\nu, s)} + \sin \theta(s) \rline{\pderivr}{\gamma(\nu, s)}
	\end{split}
\end{equation}
In the following, we make use of the splitting of the tangential space of D given by
\[
	T_{\gamma(\nu, s)}D = \myspan \left\{ \gamma'(\nu, s) \right\} \oplus T_{\exp(r(s)\nu)}T\left(r(s)\right),
\]
where $T(r) := \set{ x \in M}{d(x, N) = r}$ denotes the distance tube of radius $r$ around $N$.

\begin{lemma}
	\label{lem:3}
	The second fundamental form of $D \subset M \times \reals$ with respect to the normal $\mu(\nu, s)$ at a point $\gamma(\nu, s)$, which is defined by 
	$\sff_D(v,w) = \innerprod{\nabla^D_v \mu}{w}$, is given by
	\begin{align*}
			\sff_D \left(\gamma', \gamma' \right) &= - \theta'(s), \\
			\sff_D \left(\gamma', v \right) &= 0, \\
			\sff_D \left(v, w \right) &= \sin \theta (s) \sff_{T(r(s))} \left(v, w \right) .
	\end{align*}
	Here, $v, w \in T_{\exp(r(s) \nu)} T(r(s))$ and $\sff_{T(r)}$ is the second fundamental form given by $\sff_{T(r)}(X,Y) = \sff_{T(r)}^{\nabla r}(X,Y) = 	
	- \innerprod{\nabla_XY}{\nabla r}$.
\end{lemma}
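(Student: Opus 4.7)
The plan is to compute directly using the pointwise formula $\sff_D(X,Y) = \innerprod{\nabla^{M\times\reals}_X \mu}{Y}$ (the normal component of $\nabla_X \mu$ drops out once we pair with $Y \in TD$). Three ingredients do all the work: (i) $M \times \reals$ is a Riemannian product, so $\partial_t$ is parallel and $\nabla \pderivt = 0$; (ii) $\pderivr = \nabla r$ is the gradient of the (smoothly extended) distance function from $N$, so $\nabla_{\pderivr}\pderivr = 0$ along the radial geodesics, $\nabla_{\pderivt}\pderivr = 0$, and $\innerprod{\nabla_X \nabla r}{\nabla r} = \tfrac12 X\innerprod{\nabla r}{\nabla r} = 0$ for every $X$; (iii) if one extends $\theta$ off of $\gamma$ as a function of $r$ only (which is legitimate on the support where $r'\neq 0$; elsewhere $\theta$ is locally constant), then $v(\theta) = 0$ for every $v$ tangent to a tube $T(r(s))$.

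Using (i), expanding $\mu = \cos\theta\,\pderivt + \sin\theta\,\pderivr$ gives
\begin{equation*}
  \nabla_X \mu = -X(\theta)\sin\theta\,\pderivt + X(\theta)\cos\theta\,\pderivr + \sin\theta\,\nabla_X \pderivr.
\end{equation*}
For the first entry, I would take $X = \gamma'$. Since $\gamma'$ has no component in $TT(r(s))$ and $\nabla_{\pderivr}\pderivr = \nabla_{\pderivt}\pderivr = 0$, the last term vanishes; and $\gamma'(\theta) = \theta'(s)$. Pairing with $\gamma' = -\cos\theta\,\pderivr + \sin\theta\,\pderivt$ and using orthonormality of $\pderivr, \pderivt$ gives $-\sin^2\theta\,\theta' - \cos^2\theta\,\theta' = -\theta'(s)$, as claimed. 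For the mixed entry, I take $X = v \in TT(r(s))$; by (iii) the first two terms disappear, leaving $\nabla_v \mu = \sin\theta\,\nabla_v \nabla r$. Pairing with $\gamma'$ gives $-\sin\theta\cos\theta\,\innerprod{\nabla_v \nabla r}{\pderivr}$, which vanishes by the second clause of (ii). For the tangential part, the same expression $\nabla_v\mu = \sin\theta\,\nabla_v \nabla r$ paired with $w \in TT(r(s))$ yields $\sin\theta\,\innerprod{\nabla_v\nabla r}{w} = \sin\theta\,\sff_{T(r(s))}(v,w)$ directly from the definition of $\sff_{T(r)}$ recalled in the statement.

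The only real point of care is the extension issue addressed in (iii): $\mu$ and $\theta$ are \emph{a priori} defined only on $D$, but the computation requires $\nabla^{M\times\reals}_X\mu$ for $X$ tangent to $D$, which depends only on values of $\mu$ along curves in $M\times\reals$ tangent to $X$. Extending $\theta$ as a function of $r$ in a tubular neighborhood of $D$ and $\mu$ by the same formula makes the computation unambiguous and does not affect $\sff_D$. Beyond this bookkeeping, no new ideas are required—the lemma is really the statement that in the $(r,t)$-plane $\mu$ rotates with $\gamma'$ (giving the $-\theta'$ in the $\gamma'\gamma'$ slot), while in the tube directions $\mu$ contributes only through its $\sin\theta$-multiple of $\nabla r$ (giving the rescaled tube second fundamental form).
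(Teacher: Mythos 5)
Your proof is correct and takes essentially the same approach as the paper: both rest on the product structure of $M\times\reals$, on $\pderivr = \nabla r$ being a unit gradient field (so $\nabla_{\pderivr}\pderivr = 0$, $\nabla_{\pderivt}\pderivr = 0$, and $\innerprod{\nabla_X\nabla r}{\nabla r}=0$), and on expanding $\mu$ in the $\{\pderivr,\pderivt\}$ frame. The only cosmetic difference is that you uniformly compute $\nabla_X\mu$ and pair with $Y$, whereas the paper alternates between $\sff_D(X,Y) = -\innerprod{\nabla_X Y}{\mu}$ and $\innerprod{\nabla_X\mu}{Y}$ and phrases the key vanishing in terms of the $(r,t)$-slice being totally geodesic rather than the product/gradient facts directly.
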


\begin{proof}
	From \eqref{eq:6} it follows
	\[
		\frac{\nabla^D \gamma'}{ds}(\nu, s) = \theta'(s) \left( \sin \theta(s) \rline{\pderivr}{\gamma(\nu, s)} + \cos 
				\theta(s) \rline{\pderivt}{\gamma(\nu, s)} \right) = \theta'(s) \mu(\nu, s),
	\]
	because for a fixed $\nu \in \nu^1N$ we have
	\[
		\frac{\nabla^D}{ds}\rline{\pderivt}{\gamma(\nu, s)} = 0 = \frac{\nabla^D}{ds}\rline{\pderivr}{\gamma(\nu, s)},
	\]
	since the two-dimensional submanifold
	$\set{ \left(\exp(r \nu), t \right) \in M \times \reals}{ r \in (0, \overline{r}), \, t \in \reals}$ is totally geodesic. Thus we obtain
	\[
		\sff_D \left(\gamma'(\nu, s), \gamma'(\nu, s) \right) = - g_D\left(\frac{\nabla^D \gamma'}{ds}(\nu, s), \mu(\nu, s)\right) = - \theta'(s).
	\]
	\par
	For the next equation we compute
	\[
		\frac{\nabla^D \mu}{ds}(\nu, s) = \theta'(s) \left( - \sin \theta(s) \rline{\pderivt}{\gamma(\nu, s)} + \cos \theta(s)
			\rline{\pderivr}{\gamma(\nu, s)} \right) = - \theta'(s) \gamma'(\nu, s)
	\]
	and deduce
	\[
		\sff_D \left(\gamma'(\nu, s), v \right) = g_D \left( \frac{\nabla^D \mu}{ds}(\nu, s), v \right) = 0.
	\]
	\par
	Finally, if we denote by $\tilde{v}$ and $\tilde{w}$ extensions of $v$ and $w$, respectively, to vector fields tangent to $M 
	\subset M \times \reals$, we get
	\begin{align*}
		\sff_D \left(v, w \right) &= - g_D \left( \nabla^M_{\tilde{v}}\tilde{w}, \mu(\nu, s) \right) \\
				&= - g_M \left( \nabla^M_{\tilde{v}}\tilde{w}, \sin \theta(s) \rline{\pderivr}{\gamma(\nu, s)} \right) \\
				&= \sin \theta (s) \sff_{T(r(s))} \left(v, w \right).
	\end{align*}
\end{proof}

We are going to compare the curvature tensor of $D$ at a point $\gamma(\nu, s)$ with the corresponding one of $M$ at $\exp(r(s)\nu)$. In order to do so,
we introduce the following notation for pulling back a given curvature operator to $\algcurvop{\reals^n}$.

\begin{definition}
	\label{def:007}
 	Fix once and for all an ordered $n$-tuple $E = (e_1, \ldots, e_n)$ of vectors $e_i \in \reals^n$ that form an orthonormal basis of $\reals^n$ and suppose an ordered 
 	$n$-tuple $B = (b_1, \ldots, b_n)$ of orthonormal vectors $b_j \in T_pM$ is given. Then the \emph{B-pullback} of $R_{(M, g)}(p) \in \algcurvop{T_pM}$ is given 
 	by $\iota^*R_{(M,g)}(p) \in \algcurvop{\reals^n}$, where $\iota : \reals^n \to T_pM$ is the linear map defined by $e_i \mapsto b_i$ for $i = 1,\ldots, n$.
\end{definition}
Given a $k$-tuple $B = (b_1, \ldots, b_k)$ and an $l$-tuple $C = (c_1, \ldots, c_l)$, $B + C$ will denote the $(k + l)$-tuple
$(b_1, \ldots, b_k, c_1, \ldots, c_l)$. \par
Now, the following choice of bases will prove useful: 
For $\nu \in \nu^1_qN \subset \nu^1N$ and $r \in (0, \overline{r})$ we get a canonically defined subspace $\horzdistr_{(\nu, r)} \subset T_{\exp(r\nu)}M \cap \left\{\pderivr\right\}^{\bot}$ by parallel translation of $T_qN$ along $t \mapsto \exp(t \nu)$, and an orthogonal complementary subspace $\vertdistr_{(\nu, r)}$ such that
\begin{equation}
	\label{eq:34}
	T_{\exp(r\nu)}T(r) =  \vertdistr_{(\nu, r)} \oplus \horzdistr_{(\nu ,r)}.
\end{equation}
$\vertdistr := \bigcup_{\nu, r > 0} \vertdistr_{(\nu ,r)}$ and $\horzdistr := \bigcup_{\nu, r > 0} \horzdistr_{(\nu ,r)}$ define smooth distributions on $D(\overline{r}) \backslash N$. For each $(\nu, r)$ we choose an orthonormal basis $V(\nu ,r) = (v_1, \ldots, v_{n-k-1})$ of $\vertdistr_{(\nu, r)}$ and an orthonormal basis $H(\nu, r) = (h_1, \ldots, h_k)$ of $\horzdistr_{(\nu, r)}$ and consider the pullback of the following curvature operators to $\algcurvop{\reals^n}$. Set $B(\nu ,r) := V + H$. Then denote
\begin{gather*}
	\text{the } \left(B(\nu ,r(s)) + \left(-\gamma'(\nu, s)\right)\right)\text{-pullback of } R_{(D, g_D)} \text{ by } \tilde{R}_D(\nu, s), \\
	\text{the } \left(B(\nu ,r) + \left(\rline{\pderivr}{\exp_q (r\nu)}\right)\right)\text{-pullback of }R_{(M,g_M)}\text{ by }\tilde{R}_M(\nu, r)\text{ and}\\ 
	\text{the } \left(B(\nu ,r) + \left(\pderivt\right)\right)\text{-pullback of } R_{(T(r) \times \reals, \rline{g_M}{T(r)} +dt^2)}\text{ by }\tilde{R}_T(\nu, r).
\end{gather*}
Note that the dependences of the pullback operators on the chosen bases are suppressed by the notation and that, in fact, the mappings $(\nu, r) \mapsto \tilde{R}_*(\nu, r)$ are not even continuous, as in general there do not exist global sections of $\horzdistr$ and $\vertdistr$. However, any two choices of bases merely result in a change caused by a transformation by an element of $\Or(n)$. This does not matter as long as all tensors are being pulled back by a consistent choice of bases as just described since we are dealing with $\Or(n)$-invariant subsets of $\algcurvop{\reals^n}$ and are only interested in whether or not those pullbacks are contained in these sets.

\begin{proposition} 
	\label{prop:1}
	With the above notation, the equation
	\[
		\tilde{R}_D \left(\nu, s \right) = \cos^2 \theta(s) \, \tilde{R}_M \left(\nu, r(s) \right) + \sin^2 \theta(s) \, 
				\tilde{R}_T \left(\nu, r(s) \right)+ E \left(\nu, s \right)
	\]
	holds on $\nu^1N \times (0, \infty)$. $E : \nu^1N \times (0, \infty) \rightarrow \algcurvop{\reals^n}$ satisfies the 
	estimate
	\[
		\norm{E(\nu, s)} \leq \cos \theta(s) \left( 1 - \cos \theta(s) \right) C_1 + \frac{\theta'(s) \sin \theta(s) }{r(s)} C_2,
	\]
	where the constants $C_1$, $C_2$ depend only on $(D(\overline{r}), g_M)$ and $N$. 
\end{proposition}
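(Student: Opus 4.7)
My plan is to derive the formula from the Gauss equation applied to the hypersurface $D \subset M \times \reals$ with the unit normal $\mu(\nu, s)$ whose shape operator was computed in Lemma \ref{lem:3}. The key structural simplification is that the second factor of $M \times \reals$ is flat, so $R_{M \times \reals}$ vanishes whenever one of its entries is $\pderivt$; substituting $\gamma'(\nu, s) = -\cos\theta(s)\,\pderivr + \sin\theta(s)\,\pderivt$ therefore effectively replaces each $\gamma'$-entry inside $R_{M \times \reals}$ by $-\cos\theta(s)\,\pderivr$. On the other hand, the Gauss equation for $T(r) \subset M$ (with normal $\pderivr$) lets me recognise $\sin^2\theta$-multiples of products of $\sff_{T(r)}$-terms as exactly the difference $R_{T(r) \times \reals} - R_M$, which is precisely what produces the convex combination in the proposition.

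I would organise the verification by counting the number $j \in \{0,1,2\}$ of slots of $\tilde{R}_D(\nu, s)(e_a, e_b, e_c, e_d)$ that are filled by the last basis vector $e_n$ (which pulls back to $-\gamma'$); higher $j$ is irrelevant by antisymmetry. For $j = 0$, combining Gauss for $D$ with Gauss for $T(r)$ as indicated above collapses these entries to exactly $\cos^2\theta\,\tilde{R}_M + \sin^2\theta\,\tilde{R}_T$, producing no error. For $j = 1$, the mixed terms vanish by $\sff_D(\gamma', v) = 0$ from Lemma \ref{lem:3}, so Gauss reduces these entries to a single $\cos\theta\,R_M(\pderivr, \cdot, \cdot, \cdot)$, whereas the desired convex combination predicts $\cos^2\theta\,R_M(\pderivr, \cdot, \cdot, \cdot)$ (the $\tilde{R}_T$-term vanishes because $\pderivt$ sits in the $e_n$-slot); the discrepancy is $\cos\theta(1-\cos\theta)$ times a slice of $\tilde{R}_M$. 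For $j = 2$ with the two copies of $e_n$ in opposite index pairs, the curvature pieces again match and the sole surviving contribution is
\[
\sff_D(\gamma', \gamma')\,\sff_D(v, w) = -\theta'(s)\sin\theta(s)\,\sff_{T(r(s))}(v, w),
\]
the complementary second fundamental form product vanishing once more by $\sff_D(\gamma', v) = 0$.

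Assembling these, the error operator $E(\nu, s)$ has only two kinds of nonzero entries. The first is bounded by $\cos\theta(s)(1-\cos\theta(s))$ times $C_1 := \sup_{D(\overline{r})}\norm{R_M}$, finite because $D(\overline{r})$ is relatively compact. For the second I would invoke the standard tubular estimate $\norm{\sff_{T(r)}} \leq C/r$ valid for small $r$ around any closed embedded submanifold $N \subset M$; the $1/r$ blow-up comes from the spherical directions of the unit normal bundle of $N$, while the directions tangent to $N$ contribute only a bounded term. This yields the desired bound $\theta'(s)\sin\theta(s)/r(s) \cdot C_2$, and both constants $C_1, C_2$ depend only on $(D(\overline{r}), g_M)$ and $N$. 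The main technical nuisance is the $j = 1$ case, where the interaction between the pullback sign convention $e_n \leftrightarrow -\gamma'$ and the single $\pderivt$-contribution from $\gamma'$ produces a bare $\cos\theta$ rather than $\cos^2\theta$, and hence an unavoidable first order error; the remaining steps are routine Gauss-equation bookkeeping combined with the classical $1/r$-control of the second fundamental form of a tubular hypersurface.
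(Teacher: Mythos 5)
Your proposal reproduces the paper's argument essentially verbatim: the same Gauss-equation computation for $D \subset M \times \reals$ with the shape operator from Lemma \ref{lem:3}, the same case split by the number of $e_n$-slots, the same observation that the $\reals$-factor kills the $\pderivt$-contributions so the $j=0$ entries give the exact convex combination, the $j=1$ entries contribute the $\cos\theta(1-\cos\theta)\tilde{R}_M$ error, and the $j=2$ entries contribute $\sff_D(\gamma',\gamma')\sff_D(v,w) = -\theta'\sin\theta\,\sff_{T(r)}(v,w)$, controlled by the $1/r$-bound on the tube's second fundamental form (Lemma \ref{lem:2}). Correct and in line with the paper.
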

In order to prove this, we will need the following fact about the second fundamental form of $T(r)$ (cf. chapter 5 of \cite{Eschenburg}).
\begin{lemma}
	\label{lem:2}
	Let $T(r) \subset M$ be a distance tube around a closed submanifold $N^k \subset M^n$. Then there exists a mapping $A : \nu^1 N \times [0, \overline{r}) \to 
	\selfadjoint{T^*M}$, $A(\nu, r) \in \selfadjoint{T^*_{\exp(r\nu)}M}$, such that
	\[
		\sff_{T(r)} \left(v, w \right) = \frac{1}{r} \pi_{\vertdistr}^{\flat}(v, w) + A(\nu, r) (v, w),
	\]
	where $r \in (0, \overline{r})$, $\nu \in \nu^1N$, and $v,w \in T_{\exp(r\nu)}M \cap \{\nabla r \}^{\bot}$ ($\pi_{\vertdistr_{(\nu, r)}}$ denotes orthogonal 
	projection onto $\vertdistr_{(\nu, r)} \subset T_{\exp(r\nu)}M$.). Moreover, we have $\norm{A(\nu, r)} \leq C$ for some constant and
	\[
		\rline{A(\nu, 0)}{T_xN \times T_xN} = \sff_N^{\nu}, \quad \rline{A(\nu, 0)}{\nu_xN \times T_xN} = 0, \quad \rline{A(\nu, 0)}{\nu_xN \times \nu_xN} = 0,
	\]
	if $\nu \in \nu_xN$.
\end{lemma}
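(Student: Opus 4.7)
The plan is to derive the formula from a Fermi-coordinate expansion around $N$, in which $\sff_{T(r)}$ inherits a $\tfrac{1}{r}$-pole only in the vertical block. After shrinking if necessary, choose local coordinates $x = (x^1, \dots, x^k)$ on a patch of $N$, a local orthonormal frame of $\nu N$ over this patch, and spherical coordinates $\theta \in S^{n-k-1}$ in the fibers, so that the Fermi chart $(r, x, \theta) \mapsto \exp_x(r\,\nu(\theta))$ puts the metric in the block form
\[
g = dr^2 + g_{ij}(r, x, \theta)\, dx^i dx^j + 2 g_{i\alpha}(r, x, \theta)\, dx^i d\theta^\alpha + g_{\alpha\beta}(r, x, \theta)\, d\theta^\alpha d\theta^\beta,
\]
and $T(r)$ becomes the $r$-level set of $g$. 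Since $\pderivr = \nabla r$ is a unit geodesic field, one has $\sff_{T(r)}(X, Y) = \tfrac{1}{2}\rline{(\mathcal{L}_{\pderivr} g)}{T(r)}(X, Y)$, which reduces the problem to controlling $\partial_r g$.

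Next I would invoke the Jacobi field asymptotics along the normal geodesics $t \mapsto \exp_x(t\,\nu(\theta))$ to obtain the expansions
\[
g_{ij} = g^N_{ij}(x) + O(r), \qquad g_{i\alpha} = O(r^2), \qquad g_{\alpha\beta} = r^2\, g^{S^{n-k-1}}_{\alpha\beta}(\theta) + O(r^4).
\]
Differentiating in $r$ and re-expressing $\sff_{T(r)}$ with respect to the orthonormal bases $H(\nu, r)$ of $\horzdistr$ and $V(\nu, r)$ of $\vertdistr$ (noting that the coordinate frame $\partial_\theta^\alpha$ has length $\sim r$, which introduces one factor of $1/r$ when normalizing), one finds that the horizontal-horizontal block of $\sff_{T(r)}$ converges to $\sff_N^\nu$ as $r \to 0$, the vertical-vertical block equals $\tfrac{1}{r}\id_\vertdistr + O(r)$, and the mixed block is $O(r)$. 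Defining $A(\nu, r) := \sff_{T(r)} - \tfrac{1}{r}\, \pi_\vertdistr^\flat$ therefore yields a bilinear form that extends smoothly across $r = 0$ with the three prescribed boundary values, and compactness of $\nu^1 N$ (after covering $N$ by finitely many such charts) gives the uniform bound $\|A(\nu, r)\| \leq C$.

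The main obstacle is to verify that the mixed horizontal-vertical block of $\sff_{T(r)}$ is genuinely $O(r)$, rather than $O(1)$. The naive worry is that, after normalizing the coordinate frame $\partial_\theta^\alpha$ by a factor of $1/r$, an $O(1)$ term in $\partial_r g_{i\alpha}$ would produce an $O(1/r)$ cross term. This is ruled out precisely because the Jacobi field expansion gives $g_{i\alpha} = O(r^2)$ (and not $O(r)$), which after differentiation and rescaling leaves a bounded, in fact vanishing, mixed block at $r = 0$. The underlying geometric reason is that the horizontal distribution $\horzdistr$ is defined by parallel transport along the radial geodesics, so that horizontal and vertical Jacobi fields decouple to zeroth order at $r = 0$; this is precisely the content of the assertions $\rline{A(\nu, 0)}{\nu_x N \times T_x N} = 0$ and $\rline{A(\nu, 0)}{\nu_x N \times \nu_x N} = 0$.
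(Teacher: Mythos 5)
Your overall strategy (Fermi coordinates, Jacobi field expansions of the metric coefficients, then normalize) is reasonable and ends up closely related to the paper's computation of the boundary values of $A(\nu,0)$, but your treatment of the mixed block has a genuine gap. The paper avoids the issue entirely by studying the Hessian of the \emph{smooth} function $h = \tfrac{1}{2}r^2$: since $\hess{h}$ extends smoothly to $N$ with value $\pi_{\nu_xN}^{\flat}$, the decomposition $\hess{h}\circ\exp(r\nu) = \pi_{\tilde{\vertdistr}}^{\flat} + r\tilde{A}(\nu,r)$ is just a first-order Taylor expansion with a bounded remainder, and the identity $\hess{\tfrac{1}{2}r^2} = dr^2 + r\hess{r}$ then yields the lemma's formula with no delicate cancellation. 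The singular behavior of $\sff_{T(r)} = \hess{r}$ is entirely absorbed into dividing by $r$ at the last step.

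In your approach, the assertion that the mixed block of $\sff_{T(r)}$ is $O(r)$ is where things go wrong. From $g_{i\alpha} = O(r^2)$ you get $\partial_r g_{i\alpha} = O(r)$, hence $\sff_{T(r)}(\partial_{x^i},\partial_{\theta^\alpha}) = O(r)$; but after normalizing $\partial_{\theta^\alpha}$ by $1/r$ this is only $O(1)$, not $O(r)$. Concretely, with $J_i = \partial_{x^i}$ and $J_\alpha = \partial_{\theta^\alpha}$ viewed as $N$-Jacobi fields, one has $J_i'(0) = \sff_N^{\nu}(b_i,\cdot)^{\sharp} + \nabla^{\perp}_{b_i}\nu$ (the normal connection term does \emph{not} vanish for an arbitrary normal frame $\nu(\theta)$), and $\langle J_i'(0), w_\alpha\rangle = \langle\nabla^{\perp}_{b_i}\nu, w_\alpha\rangle$ is generically nonzero. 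Thus the mixed block of $\sff_{T(r)}$ in the normalized Fermi frame converges to $\langle\nabla^{\perp}_{b_i}\nu, w_\alpha\rangle$, not to $0$. What saves the lemma is that, in this same frame, the subtracted term $\tfrac{1}{r}\pi_{\vertdistr}^{\flat}$ also has a nonzero $O(1)$ mixed block which exactly cancels this contribution — the frame $\hat\partial_{x^i}$ differs from the parallel horizontal frame $H(\nu,r)$ by $O(r)$, and the $1/r$ pole promotes that difference to $O(1)$. Your writeup treats $\tfrac{1}{r}\pi_{\vertdistr}^{\flat}$ as having no mixed part, which is true in the parallel frame $H+V$ but false in the normalized coordinate frame your computation produces. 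To repair the argument you would need either to explicitly pass to the parallel frame (which requires relating $\partial_{x^i}$ to $H(\nu,r)$ and tracking the $O(r)$ discrepancy through the $1/r$ pole), or to choose the normal frame $\nu(\theta)$ to be $\nabla^{\perp}$-parallel at the base point $q$ so that the offending term $\nabla^{\perp}_{b_i}\nu$ vanishes there — neither of which appears in the proposal. The paper's own handling of the boundary values likewise hinges on choosing the specific $N$-Jacobi field $J_h$ with $J_h'(0) = \sff_N^{\nu}(h,\cdot)^{\sharp}\in T_xN$ (i.e.\ no normal component), which is precisely the choice you are implicitly and unjustifiably assuming for your coordinate fields.
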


\begin{proof}
	Consider $h := \frac{1}{2}r^2$, where $r(p) = d(p, N)$. Since we have $h = \frac{1}{2}\sum_{j=k+1}^n (x^j)^2$ in Fermi coordinates, where $\{x^1=\ldots =x^k = 0\} 
	\subset N$, this is a smooth function on $D(\overline{r})$ and has vanishing differential on $N$. Thus, the hessian of $h$ is readily computed to be $\hess{h}(x) = 
	\pi_{\nu_xN}^{\flat}$ at $x \in N$. Moreover, the corresponding coordinate expressions immediately show
	\[
		\hess{h} \circ \exp(r \nu) = \pi_{\tilde{\vertdistr}_{\exp(r\nu)}}^{\flat} + r \tilde{A}(\nu, r), 
	\]
	where $\tilde{\vertdistr}_{\exp(r\nu)}$, given by parallel transport of $\nu_x N$ along $t \mapsto \exp(t \nu)$, with $\nu \in \nu^1_xN$, defines a smooth 
	distribution on $D(\overline{r})$ and $\tilde{A}$ is a bounded continuous map $\nu^1 N \times [0, \overline{r}) \to \selfadjoint{T^*M}$. Recalling that the second 
	fundamental form of $T(r)$ is given by $\sff_{T(r)} = \hess{r}$ and
	\[
		\hess{\frac{1}{2}r^2} = dr^2 + r \hess{r},
	\]
	the first claims of the lemma follow by restricting the above equation to $\{\partial_r\}^{\bot}$ and observing that $\vertdistr_{(\nu, r)} = 
	\tilde{\vertdistr}_{\exp(r\nu)} \cap \{\partial_r\}^{\bot}$. \par
	For the last claim, let $v \in \nu^1_xN \cap \{\nu\}^{\bot}$, $h \in T^1_xN$, and consider Jacobi fields $J_v$ and $J_h$ along $t \to \exp(t\nu)$ with initial 
	conditions $J_v(0) = 0$, $J_v'(0) = v$ and $J_h(0) = h$, $J_h'(0) = \sff_N^{\nu}(h,\cdot)^{\sharp} \in T_xN$, respectively. Recall that 
	$\sff_{T(r)}(J,J)=\innerprod{J'}{J}$, then
	\[
		A(\nu,0)(h,h) = \lim_{t\to 0}A(\nu, t)(J_h(t), J_h(t)) = \lim_{t \to 0} \innerprod{J_h'(t)}{J_h(t)} - \frac{\pi_{\vertdistr}^{\flat}(J_h(t), J_h(t))}{t},
	\]
	which equals $\sff_N^{\nu}(h,h)$, since $\norm{\pi_{\vertdistr}(J_h(t))} = \BigO(t^2)$. Observing that $\frac{J_v(t)}{\norm{J_v(t)}} \to v$, we derive
	\[
		A(\nu, 0)(h,v) = \lim_{t\to 0} \innerprod{J_h'(t)}{\frac{J_v(t)}{\norm{J_v(t)}}} - \frac{1}{t}\pi_{\vertdistr}^{\flat}\left(J_h(t), 
			\frac{J_v(t)}{\norm{J_v(t)}}\right) = 0.
	\]
	Finally, we have
	\[
		A(\nu, 0)(v,v) = \lim_{t \to 0} \frac{\innerprod{J_v'(t)}{J_v(t)}}{\innerprod{J_v(t)}{J_v(t)}} - \frac{1}{t}
	\]
	which delivers the sought conclusion by repeated application of l'Hospital's rule.
\end{proof}

\begin{proof}[Proof of Proposition \ref{prop:1}]
	By restricting all arguments to $T_{\exp(r(s)\nu)}T(r) \subset T_{\gamma(\nu, s)}D$ we obtain, using the Gau\ss~equation twice and lemma \ref{lem:3},
	\begin{align*}
		\rline{R_D}{\gamma(\nu, s)} &= \rline{\left(R_{M \times \reals} + \sff_D \wedge \sff_D 
							\right)}{\gamma(\nu, s)} \\
				&= \rline{\left(R_M + \sin^2 \theta(s) \sff_{T(r)} \wedge \sff_{T(r)} \right)}{\exp(r(s)\nu)} \\
				&= \rline{\left( \cos^2 \theta(s) R_M + \sin^2 \theta(s) R_{T(r)} \right)}{\exp(r(s)\nu)}. 
	\end{align*}
	By switching to the algebraic identifications made above, we thus immediately get
	\begin{align*}
		\tilde{R}_D(\nu, s) &= 
					\left( \cos^2 \theta(s) \tilde{R}_M + \sin^2 \theta(s) \tilde{R}_T \right)(\nu, r(s)),
	\end{align*}
	where both sides are to be restricted to $\reals^{n-1} \times \{ 0 \} \subset \reals^n$.
	\par Next, using \eqref{eq:6} and the Gau\ss~equation we compute, with $v_i \in T_{\exp(r(s)\nu)}T(r) \allowbreak\subset T_{\gamma(\nu, s)}D$,
	\begin{align*}
		\rline{R_D}{\gamma(\nu, s)}\left(v_1, \gamma', \gamma', v_2\right) &= \rline{\left(R_{M \times \reals} + \sff_D \wedge 
							\sff_D \right)}{\gamma(s, \nu)} \left(v_1, \gamma', \gamma', v_2\right) \\
					&= \cos^2 \theta (s) \rline{R_M}{\exp(r(s)\nu)} \left(v_1, \pderivr, \pderivr, v_2\right) \\
					&\ghost{=} - \theta'(s) \sin \theta(s) \rline{\sff_{T(r)}}{\exp(r(s)\nu)} (v_1, v_2),
	\end{align*}
	since $\sff_D(v_i, \gamma') = 0$.	Expressed in the algebraic setting, this simply means, with $\tilde{v}_1, \tilde{v}_2 \in \reals^{n-1} \subset \reals^n$,
	\begin{align*}
		\tilde{R}_D(\nu, s) (\tilde{v}_1, e_n, e_n, \tilde{v}_2) &= \cos^2 \theta(s) \tilde{R}_M(\nu, r(s)) (\tilde{v}_1, e_n, e_n, 
					\tilde{v}_2) \\ 
			&\ghost{=} + \theta'(s) \sin \theta (s) \left( \frac{1}{r(s)}\pi_{\reals^{n-k-1}}^{\flat}(\tilde{v}_1, \tilde{v}_2) + \BigO(1) \right), 
	\end{align*}  
	\par Finally, computations done in the same manner yield
	\begin{align*}
		\tilde{R}_D(\nu, s) (\tilde{v}_1, \tilde{v}_2, \tilde{v}_3,e_n) &= - \rline{R_D}{\gamma(\nu, s)}(v_1, v_2, v_3, \gamma') \\
				&= - \rline{\left ( R_{M \times \reals} + \sff_D \wedge \sff_D \right)}{\gamma(\nu, s)} (v_1, v_2, v_3, \gamma') \\
				&= \cos \theta(s) \rline{R_M}{\exp(r(s)\nu)}\left(v_1, v_2, v_3, \pderivr\right) \\
				&\ghost{=} - \left( \sff_D(v_1, \gamma') \sff_D(v_2, v_3) - \sff_D(v_1, v_3) \sff_D(v_2, \gamma') \right) \\
				&= \left(\cos^2 \theta(s) + \cos \theta (s) \left( 1- \cos \theta (s)\right)\right) \\
				&\ghost{=} \cdot \tilde{R}_M (\nu, r(s)) (\tilde{v}_1, \tilde{v}_2, \tilde{v}_3, e_n).
	\end{align*}
	This finishes the proof.
\end{proof}

The occurence of $\tilde{R}_T$ in the above formula will play a key role, because of the following theorem.

\begin{theorem}
	\label{thm:1}
	Let $C \subset \algcurvop{\reals^n}$ be a curvature condition satisfying an inner cone condition with respect to 
	$R_{S^{n-k-1} \times \reals^{k+1}}$. 
	Let $N^k \subset M^n$ be a closed submanifold of the Riemannian manifold $(M^n, g_M)$.
	\par Then there exists $r_* > 0$ such that for all $r \in (0, r_*)$ the Riemannian manifold $\left(T(r) \times \reals, g_{T(r)} + g_{\reals}\right)$ satisfies $C$.
	Moreover, there exists $L > 0$ such that for a $\left(V+H+\left(\pderivt\right)\right)$-pullback $\tilde{R}_T(\nu,r)$ of $R_{(T(r) \times \reals, 
	\rline{g_M}{T(r)} + dt^2)}$ we 
	have
	\[
		\tilde{R}_T (\nu ,r) \in \ball{\frac{L}{r}}{R_{S^{n-k-1}(r) \times \reals^{k+1}}} \subset C.
	\]
\end{theorem}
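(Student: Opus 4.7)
The plan is to expand the curvature operator of $T(r) \times \reals$ via the Gauss equation, isolate the dominant $\BigO(1/r^2)$ contribution coming from the rapidly shrinking normal directions, and absorb the remaining $\BigO(1/r)$ error into $C$ using the inner cone condition. Since $T(r) \times \reals \subset M \times \reals$ is a hypersurface times a line, its curvature tensor agrees with that of $T(r) \subset M$ on vectors tangent to $T(r)$ and vanishes whenever a $\pderivt$ slot appears. The Gauss equation reads
\[
	R_{T(r)}(X,Y,Z,W) = R_M(X,Y,Z,W) + \sff_{T(r)}(X,W)\sff_{T(r)}(Y,Z) - \sff_{T(r)}(X,Z)\sff_{T(r)}(Y,W),
\]
and Lemma \ref{lem:2} gives $\sff_{T(r)} = \frac{1}{r}\pi_{\vertdistr}^{\flat} + A(\nu, r)$ with $\norm{A(\nu, r)}$ uniformly bounded on $\nu^1 N \times [0, \overline{r}]$.

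Substituting and expanding bilinearly produces a leading term $\frac{1}{r^2}\,\pi_{\vertdistr}^{\flat} \wedge \pi_{\vertdistr}^{\flat}$ from the Gauss equation, cross terms of order $\BigO(1/r)$, and $A \wedge A$ and $R_M$ remainders of order $\BigO(1)$ on the compact region $D(\overline{r})$. The essential observation is that, relative to the basis $V + H + (\pderivt)$ aligned with the splitting $\vertdistr \oplus \horzdistr \oplus \myspan\{\pderivt\}$, the leading operator is supported purely on $\bigwedge^2 \vertdistr$ and there acts as the identity. Its pullback to $\algcurvop{\reals^n}$ is therefore precisely $\frac{1}{r^2}\,\pi_{\bigwedge^2 \reals^{n-k-1}} = R_{S^{n-k-1}(r) \times \reals^{k+1}}$. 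Gathering the subleading contributions yields a constant $L > 0$, depending only on $(D(\overline{r}), g_M)$ and $N$, such that
\[
	\tilde{R}_T(\nu, r) \in \ball{L/r}{R_{S^{n-k-1}(r) \times \reals^{k+1}}}
\]
uniformly in $(\nu, r) \in \nu^1 N \times (0, \overline{r}]$.

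To place this ball inside $C$, fix any $R_0 \in C$ and set $S := R_{S^{n-k-1} \times \reals^{k+1}}$. The inner cone condition at $R_0$ supplies an open, convex, $\Or(n)$-invariant cone $C_\rho$ with $\ball{\rho}{S} \subset C_\rho$ and $R_0 + C_\rho \subset C$, for some $\rho = \rho(R_0) > 0$. Any $R$ in the ball above can be written $R = \frac{1}{r^2}(S + w)$ with $\norm{w} < Lr$, so
\[
	R - R_0 = \frac{1}{r^2}\bigl(S + w - r^2 R_0\bigr).
\]
Choosing $r_* \in (0, \overline{r})$ small enough that $L r_* + r_*^2 \norm{R_0} < \rho$, the bracketed element lies in $\ball{\rho}{S} \subset C_\rho$, and since $C_\rho$ is a cone, $R - R_0 \in C_\rho$; hence $R \in R_0 + C_\rho \subset C$. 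This simultaneously establishes the ball inclusion and the claim that $T(r) \times \reals$ satisfies $C$ for every $r \in (0, r_*)$. The principal subtlety is pinning the pullback of the leading sphere term down to $R_{S^{n-k-1}(r) \times \reals^{k+1}}$ on the nose rather than only up to a norm-small error, since it is this exact match that lets the cone $C_\rho$ swallow both the geometric error and the auxiliary operator $R_0$ in the final step.
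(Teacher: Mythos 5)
Your proof is correct, and the geometric half of the argument is identical to the paper's: Lemma \ref{lem:2}, the Gauss equation, and the Kulkarni--Nomizu expansion of $\sff_{T(r)}\wedge\sff_{T(r)}$ give exactly the decomposition $\tilde{R}_T(\nu,r)=\tfrac{1}{r^2}R_{S^{n-k-1}\times\reals^{k+1}}+E(\nu,r)$ with $\norm{E(\nu,r)}\leq L/r$, and you correctly emphasize that the $(V+H+(\pderivt))$-pullback of the leading term lands exactly on $R_{S^{n-k-1}(r)\times\reals^{k+1}}$ rather than just close to it.

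Where you diverge is the containment step. The paper first manufactures a full $\Or(n)$-invariant open cone $\tilde{C}$ with $S:=R_{S^{n-k-1}\times\reals^{k+1}}\in\tilde{C}$ and a compact set $K$ with $\tilde{C}\setminus K\subset C$ (a truncated cone), then scales the ball $\ball{\rho}{S}$ to $\ball{\rho/r^2}{R_{S^{n-k-1}(r)\times\reals^{k+1}}}$ and intersects with the truncation. You skip the intermediate cone $\tilde{C}$ entirely: pick a single $R_0\in C$, take the inner cone $C_\rho$ supplied at $R_0$ with $\ball{\rho}{S}\subset C_\rho$, write $R-R_0=\tfrac{1}{r^2}\bigl(S+w-r^2R_0\bigr)$, observe that for $Lr_*+r_*^2\norm{R_0}<\rho$ the bracketed element lies in $\ball{\rho}{S}\subset C_\rho$, and invoke the cone property of $C_\rho$ to conclude $R\in R_0+C_\rho\subset C$. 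This is a shorter, more elementary route to the same conclusion; what the paper's construction of $\tilde{C}$ buys is a reusable object (essentially the same cone/truncation argument recurs in the proof of Theorem \ref{thm:6}), whereas your ad hoc argument is local to this theorem. Both are sound, and the constant $L$ you produce is uniform in $(\nu,r)$ for the same compactness reasons cited in the paper.
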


Of course, $r_*$ is meant to be chosen so small that $T(r)$ is an embedded submanifold of $M$ for all $r < r_*$.

\begin{proof}
	Lemma \ref{lem:2} implies
	\[
		\sff_{T(r)} \wedge \sff_{T(r)} = \frac{1}{r^2} \pi_{\vertdistr}^{\flat} \wedge \pi_{\vertdistr}^{\flat} + \frac{2}{r} \pi_{\vertdistr}^{\flat} \wedge A(\nu, r) + 
			\BigO(1) 
	\]
	Hence, together with the Gau\ss~equation 
	\[
		R_{T(r)} = \rline{R_{(M, g_M)}}{T_pT(r)} + \sff_{T(r)} \wedge \sff_{T(r)}
	\]
	we get
	\begin{equation}
		\label{eq:35}
		\tilde{R}_T(\nu, r) = \frac{1}{r^2} R_{S^{n-k-1} \times \reals^{k+1}} + E(\nu, r),
	\end{equation}
	upon pulling back to $\reals^n$, with some tensor $E(\nu, r) \in \algcurvop{\reals^n}$ satisfying $\norm{E(\nu, r)} \leq Lr^{-1}$, where $L$ does not depend on the 
	choice of pullback. 
	\par
	There exists an $\Or(n)$-invariant open cone $\tilde{C} \subset \algcurvop{\reals^n}$ with $S := R_{S^{n-k-1} \times \reals^{k+1}} \in \tilde{C}$ and a 
	compact set $K \subset \algcurvop{\reals^n}$ such that $\tilde{C} \backslash K \subset C$,
	i.e. $C$ contains a truncated cone in whose interior $\lambda S$ can be found, for all $\lambda > 0$ sufficiently large. Indeed, take 
	an arbitrary $R \in C$. 
	Then $R + C_{\rho} \subset C$ implies $\ball{\mu \rho}{R + \mu S} \subset C$ for $\mu > 0$. Therefore we can find a $\lambda_0 > 0$ 
	such that $\lambda S \in 
	C$ for $\lambda \geq \lambda_0$. Due to the inner cone condition, we then have $S + C_{\rho'} \subset C$ for some $\rho' > 0$, and we can find an open cone 
	$C'$ such that $C' \backslash \left(S + C_{\rho'}\right)$ is bounded. Then $\tilde{C} = \Or(n) \ast C'$ has the desired properties.
	\par
	Now, given $\rho > 0$ with $\ball{\rho}{R_{S^{n-k-1} \times \reals^{k+1}}} \subset \tilde{C}$ and it follows that
	\[
		\ball{\frac{\rho}{r^2}}{R_{S^{n-k-1}(r) \times \reals^{k+1}}} \subset \tilde{C},
	\]
	for if $S \in \ball{\frac{\rho}{r^2}}{R_{S^{n-k-1}(r) \times 	\reals^{k+1}}}$, then we have
	$\norm{r^2 S - R_{S^{n-k-1} \times \reals^{k+1}}} < \rho$, i.e. $r^2 S \in \ball{\rho}{R_{S^{n-k-1} \times \reals^{k+1}}}\subset \tilde{C}$. Since 
	$\tilde{C}$ is a cone, we have $S \in \tilde{C}$. Moreover, for some $\tilde{r} > 0$, even 
	\begin{equation}
		\label{eq:36}
		\ball{\frac{\rho}{r^2}}{R_{S^{n-k-1}(r) \times \reals^{k+1}}} \subset \tilde{C} \backslash K \subset C
	\end{equation}
	holds, if $r \in (0, \overline{r})$.
	\par
	Then, for $r \in (0, r_*)$, $r_* := \min\left\{\tilde{r}, \frac{\rho}{L} \right\}$, the inequality $\frac{L}{r} < \frac{\rho}{r^2}$ holds, hence \eqref{eq:35} 
	implies
	\[
		\tilde{R}_T(\nu, r) \in \ball{\frac{L}{r}}{R_{S^{n-k-1}(r) \times \reals^{k+1}}} \subset \ball{\frac{\rho}{r^2}}{R_{S^{n-k-1}(r) \times \reals^{k+1}}} \subset 
			C.
	\]
	This completes the proof.
\end{proof}

Using entirely analogous arguments we can also prove the following variant.

\begin{proposition}
	\label{prop:2}
	Let $C \subset \algcurvop{\reals^n}$ be a curvature condition satisfying an inner cone condition with respect to 
	$R_{S^{n-k-1} \times \reals^{k+1}}$. 
	Let $N^{k+1} \subset M^{n+1}$ be a compact submanifold of a Riemannian manifold $(M^{n+1}, g_M)$, with totally geodesic boundary $\partial M$, $\partial N \subset 
	\partial M$ and $\nu_qN \subset T_q \partial M$ for $q \in \partial N$, such that the normal exponential map is defined on $\nu^{< \epsilon}N$, for some 
	$\epsilon > 0$. 
	\par	
	Then there exists $r_* \in (0, \epsilon)$ such that for all $r \in (0, r_*)$ the Riemannian manifold $\left(T(r), g_{T(r)}\right)$ satisfies $C$.
\end{proposition}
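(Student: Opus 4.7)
The plan is to run the proof of Theorem \ref{thm:1} verbatim; the boundary hypotheses are arranged so that every step carries through uniformly up to $\partial N$. First I would verify that Lemma \ref{lem:2} extends to this manifold-with-boundary setting. The assumptions that $\partial M$ is totally geodesic and $\nu_q N \subset T_q \partial M$ for $q \in \partial N$ force the normal exponential to map $\nu N|_{\partial N}$ into $\partial M$; consequently the Fermi coordinates around $N$ extend smoothly across $\partial N$, the tube $T(r)$ is a smooth $n$-dimensional manifold (with boundary lying inside $\partial M$), and the identity $\Hess(r^2/2) = \pi_{\vertdistr}^\flat + r\tilde{A}(\nu,r)$ continues to hold with $\tilde{A}$ uniformly bounded by compactness of $N$.

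Next, applying the Gauss equation $R_{T(r)} = \rline{R_M}{T_pT(r)} + \sff_{T(r)} \wedge \sff_{T(r)}$ and pulling back via the $(V+H)$-frames of $\vertdistr \oplus \horzdistr$ yields, exactly as in Theorem \ref{thm:1},
\[
\tilde{R}_{T(r)}(\nu, r) = \frac{1}{r^2} R_{S^{n-k-1} \times \reals^{k+1}} + E(\nu, r), \qquad \norm{E(\nu, r)} \leq L/r,
\]
with uniform $L$. Here the $\reals^{k+1}$-factor matches the horizontal $(k+1)$-dimensional distribution $\horzdistr$ directly, so no auxiliary $\reals$-factor is needed. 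The truncated-cone argument from the end of the proof of Theorem \ref{thm:1} then applies verbatim: the inner cone condition at $R_{S^{n-k-1} \times \reals^{k+1}}$ produces $\ball{\rho/r^2}{R_{S^{n-k-1}(r) \times \reals^{k+1}}} \subset C$ for all small $r$, and taking $r_* = \min\{\tilde{r}, \rho/L\}$ ensures the error $E$ lies inside this ball for $r \in (0, r_*)$.

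The main obstacle is purely geometric: confirming that nothing new happens at the boundary of $T(r)$. This amounts to checking that the horizontal distribution $\horzdistr$ and the parallel transport along radial geodesics extend consistently across $\partial N$. The totally-geodesic hypothesis on $\partial M$ preserves the splitting $TM|_{\partial M} = T\partial M \oplus \nu\partial M$ under parallel transport along geodesics in $\partial M$, and $\nu N \subset T\partial M$ at $\partial N$ ensures the radial geodesics defining the tube over boundary points of $N$ stay inside $\partial M$; together these force $A(\nu, r)$ to behave at boundary points exactly as in the interior, so the entire algebraic argument of Theorem \ref{thm:1} goes through unchanged.
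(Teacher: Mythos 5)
Your proof is correct and matches the approach the paper indicates with the remark that Proposition \ref{prop:2} follows "using entirely analogous arguments" to Theorem \ref{thm:1}. You correctly identify the two key points: the dimensional bookkeeping works out so that the tube $T(r)$ in $M^{n+1}$ is already $n$-dimensional (no auxiliary $\reals$-factor needed, with the $(k+1)$-dimensional horizontal distribution supplying the $\reals^{k+1}$ of the model operator), and the boundary hypotheses $\partial M$ totally geodesic, $\nu_qN\subset T_q\partial M$ force the normal exponential and Fermi coordinates to behave uniformly up to $\partial N$, so that Lemma \ref{lem:2} and the truncated-cone argument carry over verbatim.
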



\subsection{Construction of the deformed metric}

In this section we describe a construction of the new metric $g_D$.
This consists mainly in carefully prescribing the angular function $\theta(s)$ which needs to be done in a way maintaining
the given curvature condition. To that end, we first choose appropriate radius parameters which ensure the validity of the estimates given above. The construction itself will be subdivided into three steps.
\par
In summary, the goal of the first two steps consists in constructing a monotone increasing function $\theta : [0,\infty) \rightarrow [0, \frac{\pi}{2}]$, which starts out of zero and reaches $\frac{\pi}{2}$ in finite time, whereas the third step is necessary to smooth out the
metric which at that time will already be given as the product of the metric induced by a distance tube and a real line.
\par
By restricting our attention to the compact region 
\[
	D(\overline{r}) = \set{p \in M}{d(p,N) \leq \overline{r}}
\]
we can find a compact $\Or(n)$-invariant set $K \subset C$ such that $(D(\overline{r}), g_M)$ satisfies $K$.
Then the inner cone condition with respect to $R_{S^{n-k-1} \times \reals^{k+1}}$ implies the existence of a number $\rho > 0$
such that $R + C_{\rho} \subset C$ for all $R \in K$, where $C_{\rho}$ contains $\ball{\rho}{R_{S^{n-k-1} \times \reals^{k+1}}}$. This in turn implies
\begin{equation}
	\label{eq_2}
	\ball{\frac{\rho}{\lambda^2}}{R + R_{S^{n-k-1}(\lambda) \times \reals^{k+1}}} \subset C
\end{equation}
for $\lambda > 0$.
\par
The starting radius of the bending process $r_S$ is chosen in order to fulfill
\begin{equation}
	\label{eq:11}
	r_S < \min\left\{1, r_*, \frac{\rho}{4L}, \frac{\rho^{\frac{1}{2}}}{2} \left(\sup_{D(\overline{r})}\norm{R_M} + C_1 
		\right)^{-\frac{1}{2}} ,\overline{r} \right\}. 
\end{equation}
Here, the constants $L$ and $r_*$ are those given by Theorem \ref{thm:1}, $C_1$ by Proposition \ref{prop:1}.


\subsection{Step 1: Initial bending}

This first step consists in a slight increase of the bending angle, beginning from zero and reaching
an arbitrary small, but positive angle.

\begin{lemma}
	There exists $s_0 > 0$ and a non-decreasing smooth function $\theta : [0, s_0] \to [0, \theta_0]$ with $\theta$ being constant in neighborhoods of $0$ and $s_0$ with
	values $0$ and $\theta_0 > 0$, respectively, such that $\tilde{R}_D(\nu, s) \in C$ and $r(s) > 0$ for $s \in [0, s_0]$.
\end{lemma}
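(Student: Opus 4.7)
The plan is to rely on the openness of $C$ rather than the full strength of the inner cone condition, which will be needed only in the later, more delicate steps. Since $K$ is compact and $C$ is open with $K \subset C$, there exists $\epsilon > 0$ such that $K + \ball{\epsilon}{0} \subset C$. The strategy is to design $\theta$ so that $\tilde{R}_D(\nu, s)$ never drifts more than $\epsilon$ away from $\tilde{R}_M(\nu, r(s)) \in K$ throughout $[0, s_0]$.

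Concretely, I will fix once and for all a smooth non-decreasing function $\phi \colon [0,1] \to [0,1]$, constant in neighborhoods of $0$ and $1$, with $\phi(0) = 0$, $\phi(1) = 1$, and set $M_\phi := \max |\phi'|$. I will then take $s_0 := r_S/2$ and $\theta(s) := \theta_0\, \phi(s/s_0)$, where $\theta_0 > 0$ is a small parameter to be chosen. By construction $\theta$ is smooth, non-decreasing, and constant in neighborhoods of the endpoints of $[0, s_0]$ with values $0$ and $\theta_0$. From $r'(s) = -\cos\theta(s)$ together with \eqref{eq:5} one gets $r(s) \geq r_S - s \geq r_S/2 > 0$ on $[0, s_0]$, which already yields the required positivity of $r$ and keeps $r$ uniformly bounded below.

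For the curvature, Proposition \ref{prop:1} combined with the expansion $\tilde{R}_T(\nu, r) = r^{-2} R_{S^{n-k-1}\times\reals^{k+1}} + E'(\nu, r)$, $\norm{E'} \leq L/r$, from \eqref{eq:35}, gives
\[
\tilde{R}_D(\nu, s) - \tilde{R}_M(\nu, r(s)) = \sin^2\theta(s)\bigl(\tilde{R}_T(\nu, r(s)) - \tilde{R}_M(\nu, r(s))\bigr) + E(\nu, s).
\]
On $[0, s_0]$ every relevant quantity is controlled by $\theta_0$: $\sin\theta \leq \theta_0$, $1 - \cos\theta \leq \theta_0^2/2$, $\theta' \leq 2 M_\phi \theta_0/r_S$, and $r \geq r_S/2$. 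Substituting these into $\norm{E'} \leq L/r$ and into the bound for $\norm{E}$ from Proposition \ref{prop:1} shows $\norm{\tilde{R}_D - \tilde{R}_M} \leq \mathcal{C}\, \theta_0^2$ for a constant $\mathcal{C}$ that depends on $r_S$, $L$, $C_1$, $C_2$, $M_\phi$, $\norm{R_{S^{n-k-1}\times\reals^{k+1}}}$ and $\sup_{D(\overline{r})}\norm{R_M}$, but crucially not on $\theta_0$. Choosing $\theta_0$ small enough that $\mathcal{C}\theta_0^2 < \epsilon$ then places $\tilde{R}_D(\nu, s)$ inside $K + \ball{\epsilon}{0} \subset C$, which is the first conclusion.

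The main subtlety is the $\theta'\sin\theta/r$ piece of $\norm{E}$: in later steps, once $\theta$ approaches $\pi/2$ and $r$ is forced small, this term becomes genuinely obstructive, and one must invoke the inner cone condition together with the $r^{-2}$ blow-up of $\tilde{R}_T$ to absorb it. Here, however, both $\theta'$ and $\sin\theta$ are $O(\theta_0)$ with $\theta_0$-independent constants, while the dangerous $1/r^2$ term in $\tilde{R}_T$ is innocently weighted by $\sin^2\theta = O(\theta_0^2)$. This is precisely why openness of $C$ alone suffices at this initial stage.
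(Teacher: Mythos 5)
Your core idea is the same as the paper's: because the pullbacks $\tilde{R}_M(\nu,r)$ over the compact region lie in a compact set $K\subset C$ and $C$ is open, there is an $\epsilon>0$ so that any perturbation of size $<\epsilon$ stays in $C$, and for small $\theta_0$ the deviation $\|\tilde{R}_D-\tilde{R}_M\|$ is $O(\theta_0^2)$. That estimate is correct, and it is exactly what \eqref{eq:12} in the paper encodes.

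There is, however, a genuine problem with your choice of $s_0$. You set $s_0 = r_S/2$ and start bending immediately at $s=0$. But from \eqref{eq:5}, $r(0)=\overline{r}$, so over $[0,s_0]$ one has $r(s)\ge \overline{r}-r_S/2$, which is \emph{above} $r_S$ (and possibly far above it; nothing constrains $\overline{r}$ to be close to $r_S$). The paper instead takes $s_0=\overline{r}-r_S/2$ and freezes $\theta\equiv 0$ on the initial segment $[0,\overline{r}-r_S]$ precisely so that the bending only begins once $r(s)$ has dropped to $r_S$, and so that the step ends with $r(s_0)<r_S$. That final inequality is not decoration: Lemma \ref{lem:step2} crucially uses $r(s)<r_S$ together with \eqref{eq:11} to get
$\sin^2\theta(s)\bigl(\sup_{D(\overline{r})}\|R_M\|+C_1\bigr)<\sin^2\theta(s)\,\rho/(4r(s)^2)$ and $L/r(s)<\rho/(4r(s)^2)$. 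With your $s_0$, Step 2 would begin at $r\approx\overline{r}$, where both of these fail. So while your construction formally satisfies the literal statement of the lemma, it does not hand Step 2 what it needs, and the proof of Theorem \ref{thm:12} would break down at the transition.

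A related symptom: you write ``$r(s)\ge r_S-s$''. This is true only because $\overline{r}>r_S$ makes it a weaker restatement of $r(s)\ge\overline{r}-s$; written this way it suggests you are treating the bending as if it starts at radius $r_S$, which is exactly the implicit assumption that goes wrong. The fix is small: keep your smooth profile $\phi$ but delay it, i.e.\ set $\theta\equiv 0$ on $[0,\overline{r}-r_S]$ and $\theta(s)=\theta_0\,\phi\bigl((s-(\overline{r}-r_S))/(r_S/2)\bigr)$ on $[\overline{r}-r_S,\overline{r}-r_S/2]$, with $s_0=\overline{r}-r_S/2$. Then $r(s)\in(r_S/2,\overline{r})$ on $[0,s_0]$, $r(s_0)<r_S$, and your $O(\theta_0^2)$ estimate goes through verbatim (with $r\ge r_S/2$ now genuinely being the relevant lower bound for the $\theta'\sin\theta/r$ term).
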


\begin{proof}
	There exists $\epsilon > 0$ such that for all $r \in \left(\frac{r_S}{2}, \overline{r}\right)$ and $\nu \in \nu^1N$ we have
	\begin{equation}
		\label{eq:7}
		\ball{\epsilon}{\tilde{R}_M(\nu, r)} \subset C.
	\end{equation}
	\par
	Next we choose $\theta_0 \in \left(0, \frac{r_S}{8}\right)$ so small that the following conditions hold:
	\begin{equation}
		\label{eq:12}
			\begin{gathered}
			\sin^2 \theta_0 \left( \sup_{p \in S} \norm{R_M(p)} + \sup_{p \in S} \norm{R_{T\left(d(p,N)\right)}(p)} \right) 
						< \frac{\epsilon}{2}, \\ 
				\left( 1 - \cos \theta_0 \right) C_1 + \frac{2\sin \theta_0}{r_S} C_2 < \frac{\epsilon}{2}. 
			\end{gathered}
	\end{equation}
	\par
	We set $s_0 := \overline{r} - \frac{r_S}{2}$ and prescribe $\theta(s)$ on the initial interval $[0, s_0]$ via
	\[
			\theta(s) := \begin{cases}
				0, &\text{for } s \in \left[0, \overline{r} - r_S\right], \\
				\theta_0, &\text{for } s \in \left[\overline{r} - \frac{3}{4}r_S, s_0 \right],
			\end{cases}
	\]
	such that $\theta'(s) \in [0,1]$. Now, with the help of Proposition \ref{prop:1} and \eqref{eq:12} we see that for $s \in [0,s_0]$
	\begin{align*}
		\norm{\tilde{R}_D(\nu, s) - \tilde{R}_M(\nu, r(s))} &\leq \sin^2 \theta(s) \rline{\left( \norm{R_M} + \norm{R_{T(r(s))}} \right)}{(\exp(r(s)\nu))} \\
					&\ghost{\leq} + \cos \theta(s) \left( 1 - \cos \theta(s) \right) C_1 + \frac{\theta'(s) \sin \theta(s)}{r(s)} C_2 \\
					&< \epsilon.
	\end{align*}
	In the last inequality we used that  $r(s) > \overline{r} -  s_0 = \frac{r_S}{2}$ for $s \in [0, s_0]$, which follows from \eqref{eq:5} and the construction of 
	$\theta$. The upper inequality combined with \eqref{eq:7} implies $\tilde{R}_D(\nu, s) \in C$ for $s \in [0, s_0]$, $\nu \in \nu^1N$.
\end{proof}


\subsection{Step 2: Inductive increasing of the bending angle}
In order to extend $\theta$, while keeping $\tilde{R}_D$ in $C$ we use the decomposition given by Proposition \ref{prop:1} and the fact that $C$ satisfies the appropriate inner cone condition.

\begin{lemma}
	\label{lem:step2}
	There exists $r^{*} \in (0, \overline{r})$ such that for every $r \in (0, r^{*})$ there is an extension of $\theta$ to a smooth non-decreasing
	function $\theta : [0, \infty) \to \left[0, \frac{\pi}{2}\right]$ such that $\tilde{R}_D(\nu, s) \in C$, $r(s) > 0$ for $s \geq 0$ as well as 
	$\rline{\theta}{[\overline{s}, \infty)} \equiv \frac{\pi}{2}$ and $\rline{r}{[\overline{s}, \infty)} \equiv r$ for some $\overline{s} > 0$ big enough.
\end{lemma}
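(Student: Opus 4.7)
The plan is to extract from Proposition \ref{prop:1}, Theorem \ref{thm:1} and the inner cone condition a pointwise differential inequality of the form $\theta'<c_0\sin\theta/r$ that renders $\tilde R_D(\nu,s)\in C$ automatic, and then to produce $\theta$ as an explicit solution of a simple ODE. For the first task I would apply \eqref{eq_2} at the base point $R=\tilde R_M(\nu,r(s))\in K$ with scale parameter $\lambda=r(s)/\sin\theta(s)$, using the identity $\sin^2\theta\cdot R_{S^{n-k-1}(r)\times\reals^{k+1}}=R_{S^{n-k-1}(r/\sin\theta)\times\reals^{k+1}}$ to match the ``$R_{S^{n-k-1}(\lambda)\times\reals^{k+1}}$'' summand of \eqref{eq_2} to the leading piece of the Proposition~\ref{prop:1}/Theorem~\ref{thm:1} decomposition of $\tilde R_D-\tilde R_M$. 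The remaining error then has norm at most
\[
\sin^2\theta\bigl(L/r+\norm{\tilde R_M}\bigr)+\cos\theta(1-\cos\theta)\,C_1+\theta'\sin\theta\,C_2/r,
\]
which must lie below the slack $\rho\sin^2\theta/r^2$ guaranteed by \eqref{eq_2}. The thresholds for $r_S$ in \eqref{eq:11} are calibrated so that, for $r\leq r_S$, each of the first three summands is at most $\rho\sin^2\theta/(4r^2)$ (using $\cos\theta(1-\cos\theta)\leq\sin^2\theta$ for the third), leaving only the condition $\theta'\sin\theta\,C_2/r<\rho\sin^2\theta/(4r^2)$, i.e.\ $\theta'<c_0\sin\theta/r$ with $c_0:=\rho/(4C_2)$.

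For the second task I would consider the autonomous system $\theta'=c_1\sin\theta/r$, $r'=-\cos\theta$ for a parameter $c_1\in(0,c_0)$. Its orbits satisfy $r\sin^{1/c_1}\theta=\mathrm{const}$, so the orbit starting at $(r(s_0),\theta_0)$ reaches $\theta=\pi/2$ at the positive radius $r(s_0)\sin^{1/c_1}\theta_0$, in finite $s$-time because $\int_{\theta_0}^{\pi/2}d\theta/\sin\theta=-\log\tan(\theta_0/2)$ is finite. Letting $c_1$ vary over $(0,c_0)$ makes the terminal radius sweep the open interval $\bigl(0,r(s_0)\sin^{1/c_0}\theta_0\bigr)$ continuously, so with $r^{*}:=r(s_0)\sin^{1/c_0}\theta_0$ every target $r\in(0,r^{*})$ is attained by an appropriate choice of $c_1$.

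The orbit hits $\pi/2$ with positive slope $c_1/r$, which would spoil smoothness at the join point; I would cure this by mollifying $\theta$ on a short interval just before the hitting time so that all derivatives vanish at $\theta=\pi/2$. Since mollification only decreases $\theta'$, the inequality $\theta'<c_0\sin\theta/r$ is preserved, and the tiny shift this introduces in the terminal radius can be absorbed by a slight adjustment of $c_1$. On $[\bar s,\infty)$ one sets $\theta\equiv\pi/2$, whereupon $r'=-\cos(\pi/2)=0$ and $r\equiv r$ as required.

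The main obstacle I anticipate is the bookkeeping in the first step: the single slack $\rho\sin^2\theta/r^2$ has to absorb four independent error terms with room to spare for the $\theta'$-dependent one, and the precise constants in \eqref{eq:11} were chosen for exactly this balance. Once $\theta'<c_0\sin\theta/r$ has been isolated, the ODE analysis in the second step and the endpoint smoothing in the third are elementary.
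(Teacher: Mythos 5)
Your reduction of the problem to the pointwise differential inequality $\theta'<\text{const}\cdot\sin\theta/r$ via \eqref{eq_2}, Proposition \ref{prop:1} and Theorem \ref{thm:1} is exactly the paper's first move, with a slightly different apportionment of the slack $\rho\sin^2\theta/r^2$ (you split it into four equal quarters; the paper combines the $\sup\norm{R_M}$ and $C_1$ terms and allots half the budget to the $\theta'$-term). Both bookkeepings are correct and produce essentially the same $c_0$, so this part is fine.

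Where you genuinely depart from the paper is in the construction of $\theta$. The paper extends $\theta$ by an inductive sequence of bump functions supported on intervals of length $r_l/2$, proving that each bump raises $\theta$ by at least $\frac{\rho}{16C_2}\sin\theta_0$, so $\pi/2$ is reached after finitely many steps; the final radius is then tuned by inserting a straight segment before the last bump. You instead solve the autonomous ODE $\theta'=c_1\sin\theta/r$, $r'=-\cos\theta$ exactly, exploit the first integral $r\sin^{1/c_1}\theta$, and tune the final radius by varying $c_1$. Both approaches work; yours is more explicit and arguably cleaner for obtaining the claimed interval $(0,r^*)$ of admissible final radii, whereas the paper's achieves smoothness of the glue-ins for free because each bump function starts and ends constant.

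That last point is where your argument has a real gap. You correctly notice that the orbit hits $\pi/2$ with positive slope $c_1/r$, but the repair via the slogan ``mollification only decreases $\theta'$'' is not tenable: if $\tilde\theta'\leq\theta'$ pointwise with agreement before the mollification interval, then $\tilde\theta\leq\theta$, and since $\theta$ already equals $\pi/2$ at $s_{\text{hit}}$, for $s>s_{\text{hit}}$ one can only compare against $\theta'\equiv 0$, forcing $\tilde\theta'\equiv 0$ and leaving $\tilde\theta$ stuck strictly below $\pi/2$ forever. What you actually need is a mollification on an interval $[s_1,\overline s]$ with $\theta(s_1)=\pi/2-\epsilon$ that is checked against the inequality \emph{directly}: using $\sin\tilde\theta\geq\cos\epsilon$ and $\tilde r\leq r(s_1)$, it suffices to keep $\tilde\theta'<c_0\cos\epsilon/r(s_1)$ and $\overline s - s_1<r(s_1)/\sin\epsilon$, both of which are compatible for $\epsilon$ small because $c_1<c_0$. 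This is straightforward but must be spelled out; the slogan as written is false. You also do not address the analogous smoothing problem at $s_0$, where your ODE solution departs with slope $c_1\sin\theta_0/r(s_0)>0$ but the given $\theta$ is constant in a neighborhood of $s_0$; a short bump-mollification is needed there as well, though it is unproblematic since $\sin\theta\geq\sin\theta_0>0$ keeps the right-hand side of the inequality bounded below. Finally, the ``slight adjustment of $c_1$'' argument for recovering the exact target radius after mollification is plausible by continuity and the intermediate value theorem, but it should be stated as such rather than asserted.
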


\begin{proof}
		In fact, because of \eqref{eq_2} it will be sufficient to show
		\[
			\tilde{R}_D(\nu, s) \in \ball{ \rho \frac{\sin^2 \theta(s)}{r(s)^2}}{\tilde{R}_M(\nu, r(s)) + R_{S^{n-k-1}\left(\frac{r(s)}{\sin \theta(s)}\right) \times 
				\reals^{k+1}}}
		\]
		for maintaining $\tilde{R}_D \in C$, which of course is equivalent to
		\begin{equation}
			\label{eq:37}
			\norm{\tilde{R}_D(\nu, s) - \left(\tilde{R}_M(\nu, r(s)) + R_{S^{n-k-1}\left(\frac{r(s)}{\sin \theta(s)}\right) \times \reals^{k+1}} \right)}
			< \rho \frac{\sin^2 \theta(s)}{r(s)^2}.
		\end{equation}
		Therefore, we can estimate employing Proposition \ref{prop:1} 
		\begin{align*} 
			&\norm{\tilde{R}_D(\nu, s) - \left(\tilde{R}_M(\nu, r(s)) + R_{S^{n-k-1}\left(\frac{r(s)}{\sin \theta(s)}\right) \times \reals^{k+1}} \right)} \\
			&\ghost{xx}\leq \sin^2 \theta(s) \left( \norm{\tilde{R}_M(\nu, r(s))} + \norm{ \tilde{R}_T(\nu, r(s)) - R_{S^{n-k-1}(r(s)) \times 
						\reals^{k+1}}} \right) + \norm{E(\nu, s)} \\
			&\ghost{xx}\leq \sin^2 \theta(s) \left( \sup_{p \in D(\overline{r})} \norm{R_M(p)} + \frac{L}{r(s)}\right) + \cos \theta (s)\left(1 - 
						\cos \theta (s)\right) C_1 \\
			&\ghost{xx}\ghost{\leq} + \frac{\theta'(s) \sin \theta(s)}{r(s)} C_2,
		\end{align*}
		where we used Theorem \ref{thm:1} in the last inequality.
		Due to \eqref{eq:11}, we readily estimate \setlength\multlinegap{60pt}
		\begin{multline*}
			\sin^2 \theta(s) \sup_{p \in D(\overline{r})} \norm{R_M(p)}  + \cos \theta (s)\left(1 - \cos \theta (s)\right) C_1 \\
			\leq \sin^2 \theta(s) \left( \sup_{p \in D(\overline{r})} \norm{R_M(p)} + C_1 \right) < \sin^2 \theta(s) \frac{\rho}{4r(s)^2}
		\end{multline*}
		and
		\[
			\frac{L}{r(s)} < \frac{\rho}{4r(s)^2}.
		\]
		Hence \eqref{eq:37} holds, provided
		\[
			\frac{\theta'(s) \sin \theta(s)}{r(s)} C_2 \leq \sin^2 \theta(s) \frac{\rho}{2r(s)^2},
		\]
		which is equivalent to
		\begin{equation}
			\label{eq_1}
			\theta'(s) \leq \frac{\rho}{2C_2} \frac{\sin \theta(s)}{r(s)}.
		\end{equation}
		The remaining task is to extend the function $\theta$ from  $[0, s_0]$, to the interval $[0, \infty)$ 
		while maintaining inequality \eqref{eq_1}, the condition $r(s) > 0$ and $\rline{\theta}{[\overline{s}, \infty)} \equiv \frac{\pi}{2}$ for some $\overline{s} > 0$ 
		big enough.
		\par
		One possible way of doing this is to prescribe this extension of $\theta$ inductively as follows (but cf. also \cite{RosenbergStolz}). Suppose $\theta$ is already 
		defined on $[0, s_l]$. Set
		\[
			\theta_l := \theta(s_l) \in \left[\theta_0, \frac{\pi}{2}\right), \quad r_l := r(s_l) > 0
		\]
		(if $\theta_l = \frac{\pi}{2}$, we are done) and define $s_{l+1} := s_l + \frac{r_l}{2}$. We construct a smooth function $\eta_l : \reals \rightarrow \left[0, 
		\frac{\rho}{4C_2} \frac{\sin \theta_l}{r_l}\right]$ matching the following requirements:
		\[
			\eta_l \equiv \begin{cases}
				0, &\text{on } \left[s_l, s_l + \frac{r_l}{16}\right], \\
				\frac{\rho}{4C_2} \frac{\sin \theta_l}{r_l}, &\text{on } \left[s_l + \frac{r_l}{8} , s_{l+1} - \frac{r_l}{8}\right], \\
				0, &\text{on } \left[s_{l+1} - \frac{r_l}{16}, s_{l+1}\right]. \\
			\end{cases}
		\]
		Using this bump function we extend $\theta$ smoothly to the interval $\left[0, s_{l+1}\right]$ by setting for $s \in (s_l, s_{l+1})$
		\[
			\theta(s) := \theta(s_l) + \int_{s_l}^s \eta_l(u) \, du.
		\]
		Then, because of \eqref{eq:5},
		\[
			r_l \geq r(s) \geq \frac{r_l}{2} > 0
		\]
		and inequality \eqref{eq_1} is fulfilled, since
		\[
			\theta'(s) \leq \frac{\rho}{4C_2} \frac{\sin \theta_l}{r_l} < \frac{\rho}{2C_2} \frac{\sin \theta(s)}{r(s)}.
		\]
		Furthermore, we obtain the decisive estimate
		\[
			\theta_{l+1} - \theta_l := \theta(s_{l+1}) - \theta(s_l) \geq \int_{s_l + \frac{r_l}{8}}^{s_{l+1} - \frac{r_l}{8}} \eta_l(u)\,du \geq \frac{\rho}{16C_2} \sin 
				\theta_0.
		\]
		Therefore the amount of growth of $\theta$ in the interval $[s_l, s_{l+1}]$ is bounded from below independently of its length. This shows that the target value 
		$\theta = \frac{\pi}{2}$ can be reached by finitely many, say $m$, such bends. Of course, for the last bend the function $\eta_m$ has to be adjusted slightly in 
		order to avoid values above $\frac{\pi}{2}$.
		Moreover, any $r < r^{*} := r(s_m)$ can be achieved by inserting a straight line segment before performing the last bending step.
		\par
		Finally, we extend $\theta$ to $[0, \infty)$ by setting $\theta \equiv \frac{\pi}{2}$ on $[s_m, \infty)$.
\end{proof}


\subsection{Step 3: Smoothing of the end}
\label{sec:7}

\begin{lemma}
	\label{lem:step3}
	There exists $r^{**} > 0$ such that for any $r \in (0, r^{**})$ there exists a metric $g(t) + dt^2$ on $T(r) \times [t(\overline{s}), 
	\infty)$ which coincides with the induced metric of $M \times \reals$ for $t \in [t(\overline{s}), t(\overline{s})+1]$, which equals $\rline{h}{\nu^rN} + dt^2$ for 
	$t \geq t(\overline{s})+2$ and such that $C$ is satisfied.
\end{lemma}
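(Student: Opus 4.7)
The plan is to interpolate via a smooth cutoff in the $t$-variable between the initial tube metric at $t = t(\overline{s})$ and the target connection metric at $t = t(\overline{s}) + 2$, viewing both as metrics on the common underlying manifold $T(r) \cong \nu^r N$ (identified via the normal exponential map). Concretely, with $g_0 := g_{T(r)}$ and $g_1 := \rline{h}{\nu^r N}$, and a smooth $\phi : \reals \to [0,1]$ with $\phi \equiv 0$ on $(-\infty, t(\overline{s})+1]$ and $\phi \equiv 1$ on $[t(\overline{s})+2, \infty)$, I would set
\[
	g(t) := (1 - \phi(t))\, g_0 + \phi(t)\, g_1
\]
and consider the metric $g(t) + dt^2$ on $T(r) \times [t(\overline{s}), \infty)$. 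The prescribed boundary behavior is then built in, so the only thing to verify is that $C$ is satisfied at every $t$ provided $r$ is sufficiently small.

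The key observation is that both endpoint metrics $g_0 + dt^2$ and $g_1 + dt^2$ have curvature operator close to $\frac{1}{r^2} R_{S^{n-k-1} \times \reals^{k+1}}$ in an appropriate $(V + H + (\pderivt))$-pullback: for $g_0 + dt^2$ this is exactly the content of Theorem \ref{thm:1}, while for $g_1 + dt^2$ an analogous O'Neill-type computation for the Riemannian submersion $\nu^r N \to N$ (whose fibres are round $(n-k-1)$-spheres of radius $r$, with $A$-tensor and base curvature bounded independently of small $r$) gives the same form with remainder of order $\frac{1}{r}$. Furthermore, in an orthonormal frame adapted to the sphere-bundle structure, the difference $g_1 - g_0$ and its derivatives are of lower order in $r^{-1}$ than the fibre curvature, so the shape operator $\frac{1}{2} g^{-1}(t)g'(t)$ of the slice $\{t\} \times T(r) \subset (T(r) \times \reals, g(t)+dt^2)$ and its $t$-derivative are bounded by a constant (depending only on the fixed data $g_N$, $g_{\nu N}$, $\nabla$ and on $M$) times $|\phi'(t)| + |\phi''(t)|$. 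Consequently the extra curvature contributions produced by the time-dependence of $g(t)$, which enter polynomially through the shape operator, its $t$-derivative and the intrinsic curvature of $g(t)$, remain uniformly bounded as $r \to 0$ and are therefore dominated by the leading $\frac{1}{r^2}$ spherical term.

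Combining these estimates and invoking the same truncated-cone argument as in the proof of Theorem \ref{thm:1}, one concludes that for $r^{**}$ sufficiently small, every $r \in (0, r^{**})$ and every $t \in [t(\overline{s}), \infty)$, the appropriate pullback of the curvature operator of $g(t) + dt^2$ lies inside $\ball{\rho/r^2}{R_{S^{n-k-1}(r) \times \reals^{k+1}}} \subset C$, which proves the lemma. The main obstacle I anticipate is the explicit curvature bookkeeping for the time-dependent metric $g(t) + dt^2$: one must verify that \emph{every} error term not coming from the spherical fibre is uniformly of lower order in $r^{-1}$ than the $r^{-2}$ sphere term, uniformly in $t \in [t(\overline{s})+1, t(\overline{s})+2]$. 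Although somewhat tedious, this is essentially the same perturbation scheme as in Theorem \ref{thm:1} and Proposition \ref{prop:1}, with the only added complication being the $t$-dependence of the sphere-bundle metric, and no genuinely new estimate is required.
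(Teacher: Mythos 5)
Your strategy is genuinely different from the paper's, and it has a gap precisely at the step you defer to ``tedious bookkeeping.''

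The paper does \emph{not} interpolate directly between $g_{T(r)}$ and $\rline{h}{\nu^r N}$ on the $r$-dependent tube. Instead it interpolates the \emph{ambient} metrics: it builds a path $g(t)$ of metrics on the fixed $(n{+}1)$-manifold $B \times [0,1]$, $B = D^{g_M}(\overline r)$, from $g_M$ to the connection metric $h$, forms $\tilde g := g(t) + dt^2$, and then takes the $r$-tube $\tilde T(r)$ of $N \times [0,1]$ in $\left(B\times[0,1],\tilde g\right)$. Since the interpolation lives on a manifold that does not shrink as $r\to 0$, the estimate that $\tilde T(r)$ satisfies $C$ for small $r$ is exactly what Proposition \ref{prop:2} (the boundary version of Theorem \ref{thm:1}) already provides, with no new curvature computation. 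The identification with $\left(T(r)\times[0,\epsilon),g_D\right)$ near $t=0$ and with $\left(T(r)\times(1-\epsilon,1],\rline{h}{T(r)}+dt^2\right)$ near $t=1$ is automatic because $g(t)$ is constant near the endpoints.

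Your route replaces this reduction with a direct estimate of the curvature of the time-dependent metric $g(t)+dt^2$ on the $r$-dependent sphere bundle $T(r)$, and the crucial missing piece is the assertion that the \emph{intrinsic} curvature of the convex combination $g(t)=(1-\phi(t))g_0+\phi(t)g_1$ is still $\frac{1}{r^2}R_{S^{n-k-1}\times\reals^{k}}+\BigO(1/r)$ uniformly for intermediate $t$ and small $r$. Curvature is nonlinear in the metric, so this does not follow from the two endpoint estimates; it would require redoing the analysis of Lemma \ref{lem:2} / Theorem \ref{thm:1} for a one-parameter family of non-distance-tube metrics. You also claim the shape operator of the slices and its $t$-derivative are $\BigO(1)$, which is plausible (both $g_0,g_1$ have the same $r^2/r/1$ block structure, so $g(t)^{-1}(g_1-g_0)$ is bounded), but again left unverified. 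Finally, the identification $T(r)\cong \nu^r N$ you invoke is non-canonical when $g_{\nu N}\neq \rline{g_M}{\nu N}$ — the $g_M$-distance sphere and the $g_{\nu N}$-sphere are different sphere bundles — and a choice of diffeomorphism plus its effect on the curvature estimates would have to be controlled. None of this is fatal, but it is exactly the ``genuinely new estimate'' you say is not required; Proposition \ref{prop:2} is the device the paper introduces specifically so that this step never has to be carried out by hand.
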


Here, $h$ is the connection metric given by the prescribed metrics on $N$, $\nu N$ and the connection $\nabla$.

\begin{proof}
	On $B = D^{g_M}(\overline{r})$ we construct a one-parameter family $g(t)$, $t \in [0,1]$, with
	\[
		g(t) = \begin{cases}
			\rline{g_M}{B}, &\text{for } t \in \left[0, \frac{1}{4}\right], \\
		  \rline{h}{B}, &\text{for } t \in \left[\frac{3}{4}, 1\right]. \\
		 \end{cases}
	\]
	We apply Proposition \ref{prop:2} to $N \times [0,1] \subset B \times [0,1]$, the latter being equipped with the metric $\tilde{g} := g(t) + dt^2$, obtaining a 
	constant $r^{**}$ such that for $r \in (0, r^{**})$ the  $n$-dimensional distance tube $\tilde{T}(r) \subset B \times [0,1]$ with the induced metric satisfies $C$. 
	Now, for $r$ small enough we can find $\epsilon > 0$ such that
	\[
		\left(\tilde{T}(r) \cap \left(B \times [0, \epsilon)\right), \rline{\tilde{g}}{\tilde{T}(r) \cap \left(B \times [0, \epsilon)\right)} 
			\right) = \left( T(r) \times [0, \epsilon), g_D \right)
	\]
	and
	\[
		\left(\tilde{T}(r) \cap \left(B \times (1 - \epsilon, 1]\right), \rline{\tilde{g}}{\tilde{T}(r) \cap \left(B \times (1 - \epsilon, 1]\right)} \right)
			= \left( T(r) \times (1- \epsilon, 1], \rline{h}{T(r)} + dt^2 \right).
	\]
	By extending the metric constantly for $t \geq 1$ and relabelling, the claim follows.
\end{proof}

Therefore, if we choose as the final radius in the bending process in Lemma \ref{lem:step2} a radius $r$ that fulfills $0 < r < \underline{r} := \min\{r^*, r^{**} \}$, we can simply replace the metric on $D \cap \left(M \times [t(\overline{s}), \infty)\right) = T(r) \times [t(\overline{s}), \infty)$ by the one constructed in Lemma \ref{lem:step3}. This completes the proof of Theorem \ref{thm:12}.

\section{Vertical rescaling of Riemannian submersions}
\label{sec:subm}
%
%

In this section, we consider Riemannian submersions $\pi : (M^n, g_M) \rightarrow (B^{n-k}, g_B)$ of closed Riemannian manifolds and show that the total space $M^n$ with a vertically rescaled metric $g_M^t$ satisfies a given curvature condition, provided the fibers do so in an appropriate way. To define $g_M^t$, recall that the fiber $F(b) := \pi^{-1}\left(\{b\}\right)$ over a point $b \in B$ is naturally an embedded submanifold of $M$ and will henceforth be endowed with the induced metric, denoted by $g_{F(b)}$. This implies the existence of a smooth orthogonal splitting of the tangent bundle $TM$, given pointwise by
\[
	T^v_pM := T_pF\left(\pi(p)\right), \qquad T_p^hM := {T_pF\left(\pi(p)\right)}^{\bot}.
\]
The corresponding orthogonal projections are smooth maps and will be denoted by $w \mapsto \vertproj{w}$ and $w \mapsto \horzproj{w}$, respectively. 
\par
Now, by shrinking the metric $g_M$ in the direction of the fibers we get a new metric $g_M^t$, $t > 0$, defined by
\begin{align*}
	g_M^t (w_1, w_2) &:= t^2 g_M (\vertproj{w_1}, \vertproj{w_2}) + g_M(\horzproj{w_1}, 
												\horzproj{w_2}) \\
									 &=t^2 g_{F(\pi(p))}(\vertproj{w_1}, \vertproj{w_2}) +
									 				\pi^*g_B (w_1, w_2),
\end{align*}
for $w_1, w_2 \in T_pM$, $p \in M$. With respect to this deformed metric the map $\pi : (M^n, g_M^t) \to (B^{n-k}, g_B)$ continues to be a Riemannian submersion with the same decomposition $TM = T^vM \oplus T^hM$ of the tangent bundle into a vertical and  horizontal subbundle.
\begin{theorem}
	\label{thm:6}
	Let $C \subset \algcurvop{\reals^n}$ be a curvature condition. Let $\pi : (M^n, g_M) \rightarrow (B^{n-k}, g_B)$
	be a Riemannian submersion, $M^n$ and $B^{n-k}$ being closed manifolds. If $C$ satisfies an inner cone condition with respect to any curvature operator corresponding 
	to
	\[
		R_{\left(F(b) \times \reals^{n-k}, g_{F(b)} + g_{\reals^{n-k}}\right)}(p),
	\]
	with $b \in B$, $p \in F(b)$, then there is $t_* > 0$ such that for each $t \in (0, 
	t_*)$ the Riemannian manifold $\left(M, g_M^t\right)$ satisfies $C$.
\end{theorem}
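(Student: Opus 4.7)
The plan is to reduce the theorem to a single asymptotic expansion. Namely, I would show that upon pulling back the curvature operator of $(M, g_M^t)$ at $p \in F(b)$ to $\algcurvop{\reals^n}$ via a $g_M^t$-orthonormal frame adapted to the submersion --- $(\tilde V_i, H_j)$, where $\tilde V_i := V_i/t$ for a $g_{F(b)}$-orthonormal vertical frame $(V_i)$ and $(H_j)$ is a $g_M$-orthonormal horizontal frame --- one has
\[
	\tilde R_{g_M^t}(p) = \frac{1}{t^2}\, \tilde R_{F(b) \times \reals^{n-k}}(p) + E(t, p),
\]
with $\|E(t, p)\|$ bounded uniformly for $p \in M$ and $t \in (0, t_0)$. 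Here $\tilde R_{F(b) \times \reals^{n-k}}(p)$ denotes the pullback via the $g_M$-orthonormal basis $(V_i, H_j)$, which (via the identification $T_p^h M \cong T_b B \cong \reals^{n-k}$) is an $\Or(n)$-representative of the standard pullback of $R_{F(b) \times \reals^{n-k}}$.

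To derive the expansion, I would track the O'Neill tensors $T$ and $A$ under vertical rescaling. A direct comparison of Koszul's formula for $g_M$ and $g_M^t$ yields $(\nabla^t_X Y)^v = (\nabla_X Y)^v$ for basic horizontal $X, Y$ and $(\nabla^t_U V)^h = t^2 (\nabla_U V)^h$ for vertical $U, V$. Consequently $A^t_{H_i} H_j = A_{H_i} H_j$ as a vertical vector whose $g_M^t$-norm is $\BigO(t)$ (since the vertical metric is shrunk), while $T^t_{\tilde V_i} \tilde V_j = T_{V_i} V_j$ is a horizontal vector of $g_M^t$-norm $\BigO(1)$. Plugging these asymptotics into O'Neill's curvature identities --- the standard canonical variation calculation --- the purely vertical block of $R_{g_M^t}$ equals $\tfrac{1}{t^2}$ times the intrinsic fiber curvature plus $|T^t|^2$-corrections of order $\BigO(1)$; the purely horizontal block equals the pullback of $R_B$ up to $|A^t|_{g_M^t}^2$-corrections of order $\BigO(t^2)$; and the mixed blocks remain $\BigO(1)$, with the $A^t$-quadratic contributions even smaller. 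The stated expansion follows.

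With the expansion in hand, fix any $R_0 \in C$. The inner cone condition furnishes, for each $p$, an $\Or(n)$-invariant open convex cone $C_{\rho(p)}$ containing $\ball{\rho(p)}{\tilde R_{F(\pi(p)) \times \reals^{n-k}}(p)}$ with $R_0 + C_{\rho(p)} \subset C$, where $\rho(p) > 0$ depends continuously on the pullback. Compactness of $M$ together with the $\Or(n)$-invariance of the setup yields a uniform lower bound $\rho(p) \geq \rho_* > 0$. By the cone property, for every $\lambda > 0$,
\[
	\ball{\lambda \rho_*}{R_0 + \lambda\, \tilde R_{F(\pi(p)) \times \reals^{n-k}}(p)} \subset R_0 + C_{\rho_*} \subset C.
\]
Setting $\lambda = 1/t^2$ and noting that $\tilde R_{g_M^t}(p) - R_0 - \tfrac{1}{t^2} \tilde R_{F(\pi(p)) \times \reals^{n-k}}(p) = E(t, p) - R_0$ has uniformly bounded norm, I conclude $\tilde R_{g_M^t}(p) \in C$ once $t < t_*$ is small enough that $\|E(t, p) - R_0\| < \rho_*/t^2$ uniformly in $p$.

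The main obstacle is the careful bookkeeping in deriving the expansion. Among the numerous terms appearing in O'Neill's formulas --- particularly the covariant-derivative terms $\nabla^t T^t$ and $\nabla^t A^t$ --- one must verify that no contribution exceeds $\BigO(1/t^2)$ in the vertical-vertical block or $\BigO(1)$ elsewhere. A subsidiary point is that the adapted frame is determined only up to an element of $\Or(k) \times \Or(n-k) \subset \Or(n)$, but the $\Or(n)$-invariance of $C$ and of the inner cone condition renders this harmless once uniformity of $\rho_*$ is extracted from compactness.
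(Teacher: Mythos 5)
Your approach is essentially the same as the paper's: pull back $R_{g_M^t}$ via the adapted frame $(V_i/t, H_j)$, use O'Neill/Besse (the paper's Lemma~\ref{lem:1}) to decompose $\tilde R_{g_M^t} = \frac{1}{t^2}\tilde R_{F\times\reals^{n-k}} + E^t$, and then exploit the cone structure from the inner cone condition to absorb $E^t$. Your final cone argument (push off a fixed $R_0 \in C$ by a scaled cone) is a slight variant of the paper's (which instead constructs a truncated cone $\tilde C \setminus K \subset C$), but both correctly reduce to the fact that $\frac{1}{t^2}\tilde R_F$ sits inside $C$ with radius of safety of order $1/t^2$.

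One factual slip in your expansion: the error $E^t$ is \emph{not} uniformly $\BigO(1)$. The mixed block with three vertical and one horizontal argument, $R_{g_M^t}(v_1,v_2,v_3,h)$, equals $t^2 R_M(v_1,v_2,v_3,h) - (t^2-t^4)(\cdots)$; after dividing by $t^3$ for the pullback this is $\BigO(t^{-1})$, and the corresponding component of the product model $F\times\reals^{n-k}$ vanishes, so $E^t$ is genuinely $\BigO(t^{-1})$ in that block (the paper's estimate is $\norm{E^t} \leq C/t$). Your claim that "the mixed blocks remain $\BigO(1)$" misses this. It does not affect the conclusion, since $\BigO(1/t)$ is still dominated by the available radius $\rho_*/t^2$ for small $t$, but the stated uniform boundedness of $E$ is false as written and should be replaced by the $\BigO(1/t)$ bound.
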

\begin{remark}
	If $C$ happens to be a convex cone, the given condition simplifies to $\tilde{R}_{F(b) \times \reals^{n-k}}(p) \in C$.
\end{remark}
For the \emph{proof} of Theorem \ref{thm:6}, recall that the behavior of a Riemannian submersion is determined by two tensorial invariants of type $(2,1)$ which are given by
\begin{align*}
	T_XY &= \horzproj{\left(\nabla_{\vertproj{X}}\vertproj{Y}\right)} + \vertproj{\left(\nabla_{\vertproj{X}}\horzproj{Y}\right)},
					\\
	A_XY &= \horzproj{\left(\nabla_{\horzproj{X}}\vertproj{Y}\right)} + \vertproj{\left(\nabla_{\horzproj{X}}\horzproj{Y}\right)},
\end{align*}
where $X,Y \in \crosssection{TM}$. The tensor $T$ essentially describes the second fundamental form of the fibers, whereas the tensor $A$ serves as an obstruction to the integrability of the horizontal distribution $T^hM$.
\par
Now, using the well-known O'Neill formulas and the variational behavior of $A$ and $T$ with respect to vertically rescaled metrics (see \cite{Besse}, Theorem 9.28, and Lemma 9.69, respectively) we derive the following set of
equations. 

\begin{lemma}
	\label{lem:1}
	Let $v_1, \ldots, v_4 \in T_p^vM$ and $h_1, \ldots, h_4 \in T_p^hM$, $b = \pi(p)$. Then
	the $(4,0)$-curvature tensor of $(M^n, g_M^t)$ is given by
	\begin{align*}
		R_M^t (v_1, v_2, v_3, v_4) &= t^2 R_{F(b)}(v_1, v_2, v_3, v_4) \\
				&\ghost{=} - t^4 \bigl( g_M\left(T_{v_2}v_3, T_{v_1}v_4 \right) - g_M\left(T_{v_1}v_3, T_{v_2}v_4 \right) \bigr) \\
		R_M^t(v_1, v_2, v_3, h_1) &= t^2 R_M(v_1, v_2, v_3, h_1) \\
				&\ghost{=} - (t^2 - t^4) \bigl( g_M\left(T_{v_1}v_3, A_{h_1}v_2 \right) + g_M\left(T_{v_2}v_3, A_{h_1}v_1 \right) 
					\bigr)\\
		R_M^t(v_1, v_2, h_1, h_2) &= t^2 R_M(v_1, v_2, h_1, h_2) \\
				&\ghost{=} + (t^2 - t^4) \bigl( g_M\left(A_{h_1}v_2, A_{h_2}v_1 \right) - g_M\left(A_{h_1}v_1, A_{h_2}v_2 \right) 
					\bigr) \\
		R_M^t(h_1, v_1, h_2, v_2) &= t^2 R_M(h_1, v_1, h_2, v_2) + (t^2 - t^4) g_M \left(A_{h_1}v_2, A_{h_2}v_1 \right) \\
		R_M^t(h_1, h_2, h_3, v_1) &= t^2 R_M(h_1, h_2, h_3, v_1) \\
		R_M^t(h_1, h_2, h_3, h_4) &= t^2 R_M (h_1, h_2, h_3, h_4) + (1 - t^2) (\pi^*R_B)(h_1, h_2, h_3, h_4).
	\end{align*}
\end{lemma}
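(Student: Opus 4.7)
The plan is to combine two well-established tools: O'Neill's curvature identities for a Riemannian submersion (see \cite{Besse}, Theorem 9.28), which express $R_M$ in terms of the fiber curvature $R_{F(b)}$, the base curvature $\pi^*R_B$, and the fundamental tensors $A$, $T$, together with the variational formulas describing the behavior of $A$ and $T$ under vertical rescaling of the metric (\cite{Besse}, Lemma 9.69). Both are applied to the rescaled submersion $\pi : (M, g_M^t) \to (B, g_B)$, and then each resulting expression is rewritten in terms of the unrescaled data.

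The six identities of the lemma correspond exactly to the six cases in O'Neill's theorem, indexed by the horizontal/vertical pattern of the four inputs. Since vertical rescaling does not alter the splitting $TM = T^vM \oplus T^hM$, the same O'Neill identities hold for $(M, g_M^t)$, with $R_M, R_{F(b)}, g_M, A, T$ replaced by $R_M^t, R_{F(b)}^t, g_M^t, A^t, T^t$. All that remains is to track how each rescaled object relates to its unscaled counterpart.

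The relevant scaling facts are: $g_M^t = t^2 g_M$ on vertical inputs and $g_M^t = g_M$ on horizontal ones, so $R_{F(b)}^t = t^2 R_{F(b)}$ as a $(0,4)$-tensor; and, by \cite{Besse}, Lemma 9.69, the rescaled tensors $A^t$ and $T^t$ differ from $A$ and $T$ by specific powers of $t$ depending on whether each argument and the output lies in the horizontal or vertical subbundle (for instance $T^t_{v_1}v_2 = t^2 T_{v_1}v_2$ for vertical inputs with horizontal output, while $A^t_{h_1}v_1 = A_{h_1}v_1$ in the mixed case). Substituting these into each of the six O'Neill identities and collecting terms produces the six stated formulas; the recurring coefficient $(t^2 - t^4)$ arises because the leading $t^2 R_M(\ldots)$-term already contains $t^2$-scaled $A$- and $T$-corrections which must be subtracted before the $t^4$-contributions coming from $A^t$ and $T^t$ are added back. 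The main obstacle is simply the careful bookkeeping of $t$-exponents and of the positions of the indices lowered by $g_M^t$; no geometric content beyond the two cited formulas enters.
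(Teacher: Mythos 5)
Your approach is exactly the one the paper takes --- apply Besse's O'Neill formulas (Theorem 9.28) to the rescaled submersion and substitute the canonical-variation scaling laws for $T$ and $A$ (Lemma 9.69); the paper itself gives no further detail beyond this citation. However, your illustration states the wrong scaling for the mixed case of $A$: Besse's Lemma 9.69 gives $A^t_X U = t^2 A_X U$ for $X$ horizontal and $U$ vertical (only the purely horizontal case $A^t_X Y = A_X Y$ is unchanged), whereas you wrote $A^t_{h_1}v_1 = A_{h_1}v_1$. This is not a harmless slip: the $t^4$-contributions you correctly invoke to explain the $(t^2 - t^4)$ coefficients arise precisely because each $A^t_h v$ carries a factor $t^2$, and $A_h v$ is horizontal, where $g^t_M = g_M$, so $g^t_M\bigl(A^t_{h_1}v_i, A^t_{h_2}v_j\bigr) = t^4\, g_M\bigl(A_{h_1}v_i, A_{h_2}v_j\bigr)$. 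With the rule as you stated it, those terms would come out with coefficient $1$ rather than $t^4$, and the bookkeeping you describe would fail to reproduce the third and fourth identities of the lemma. Once the scaling law for $A^t_{h}v$ is corrected, your outline agrees with the paper's.
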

\begin{proof}[Proof of Theorem \ref{thm:6}]
	Let $U \subset M$ be an open subset such that there exists an $g_M$-orthonormal frame $V(q) + H(q)$, with $V(q) = (v_1(q), \ldots, v_k(q))$, $H(q) = (h_1(q), \ldots, 
	\allowbreak h_{n-k}(q))$, $v_i, h_j \in \crosssection{U, TM}$ with
	$v_i(q) \in T_q^vM$ and $h_j(q) \in T_q^hM$, $q \in U$. By setting $V^t(q) := (v_1^t(q), \ldots, v_k^t(q))$, $v_j^t := \frac{1}{t} v_j$, we obtain a corresponding 
	frame on $U$ with respect to the metric $g_M^t$. 
	\par
	Let $\tilde{R}^t_M(q)$ denote the $(V^t(q)+H(q))$-pullback of $R_{(M, g^t_M)}(q)$ as in Definition \ref{def:007}, and 
	denote by $\tilde{R}^t_F(q)$ the $\left(V^t(q) + (e_{k+1},\ldots, e_n)\right)$-pullback of the curvature tensor corresponding to the manifold $\left(F(\pi(q)) \times \reals^{n-k}, 
	t^2g_{F(\pi(q))} + g_{\reals^{n-k}}\right)$.
	\par
	Now, Lemma \ref{lem:1} shows that $\tilde{R}_M^t$ can be decomposed as
	\begin{equation}
		\label{eq:3}
		\tilde{R}_M^t = \tilde{R}^t_{F} + E^t,
	\end{equation}
	with $E^t : U \rightarrow \algcurvop{\reals^n}$ collecting the remaining terms. By assumption we can find an $\Or(n)$-invariant cone 
	$\tilde{C}$ as in the proof of Theorem \ref{thm:1} such that 
	$\tilde{C} \backslash K \subset C$ and that for some $p \in U$ and $\epsilon > 0$ we have $\ball{\epsilon}{\tilde{R}_F^1(p)} \subset \tilde{C}$. 
	$\tilde{C}$ being a cone, we get $\ball{\frac{\epsilon}{t^2}}{\tilde{R}^{t}_{F}(p)} \subset \tilde{C}$ for $t > 0$ and then 
	$\ball{\frac{\epsilon}{t_p^2}}{\tilde{R}^{t_p}_{F}(p)} \subset \tilde{C} \backslash K$ for some $t_p > 0$. By shrinking $U$, if necessary, we find $\delta > 0$ 
	such that 
	\[
		\ball{\frac{\delta}{t^2}}{\tilde{R}_F^t(q)} \subset C
	\] 
	for all $q \in U$ and $t \in (0, t_p)$. Thus, it is enough to show the existence of some $t_* \in (0, t_p]$ with 
	\begin{equation}
		\label{eq:4}
		\norm{E^t(q)} < \frac{\delta}{t^2}
	\end{equation}
	for $q \in U$ and $t \in (0, t_*)$. Because $M$ can be covered by finitely many neighborhoods $U$, this will finish the proof.
	\par
	Now, to prove \eqref{eq:4}, a case by case study of the equations of 
	Lemma \ref{lem:1} yields, with $i,j,m,l \in \{1,\ldots, k\}$, $r,s,u,v \in \{1, \ldots, n-k \}$,
	\begin{align*}
		E^t (e_i, e_j, e_m, e_l) &= \left(\tilde{R}^t_M - \tilde{R}^t_F \right)(e_i, e_j, e_m, e_l) \\
			&= \left( t^{-4}R^t_M - t^{-2}R_{F(\pi(p))} \right)(v_i, v_j, v_m, v_l) \\
			&= g_M\left(T_{v_j}v_m, T_{v_i}v_l \right) - g_M\left(T_{v_i}v_m, T_{v_j}v_l \right).\\
		E^t (e_i, e_j, e_m, e_{k+r}) &= t^{-1} R_M(v_i, v_j, v_m, h_r) \\
			&\ghost{=} - \left(t^{-1} - t \right) \left( g_M\left(T_{v_i}v_m, A_{h_r}v_j \right) + g_M\left(T_{v_j}v_m, 
					A_{h_r}v_i \right) \right)\\
		E^t (e_i, e_j, e_{k+r}, e_{k+s}) &= R_M(v_i, v_j, h_r, h_s) \\
			&\ghost{=} + \left(1 - t^2 \right) \left( g_M\left(A_{h_r}v_j, A_{h_s}v_i \right) - g_M\left(A_{h_r}v_i, A_{h_s}v_j 
				\right) \right) \\
		E^t(e_{k+r}, e_i, e_{k+s}, e_j) &= R_M(h_r, v_i, h_s, v_j) + (1 - t^2) g_M \left(A_{h_r}v_j, A_{h_s}v_j \right) \\
		E^t(e_{k+r}, e_{k+s}, e_{k+u}, e_i) &= t R_M(h_r, h_s, h_u, v_i) \\
		E^t(e_{k+r}, e_{k+s}, e_{k+u}, e_{k+v}) &= t^2 R_M (h_r, h_s, h_u, h_v) + (1 - t^2) (\pi^*R_B)(h_r, h_s, h_u, h_v).
	\end{align*}
	Thus, we can find a constant $C > 0$ such that $\norm{E^t(q)} \leq \frac{C}{t}$ for all $q \in U$, which evidently implies \eqref{eq:4} for some $t_*$.
\end{proof}

\section{Proof of Theorem C}
\label{sec:class}

	We consider first the case that $M$ is spin. Stolz proved that the vanishing of the $\alpha$-invariant implies that $M$ is spin cobordant to 
	the total space $N^n$ of a fiber bundle with fiber $\mathbb{H}P^2$ (cf. Theorem B of \cite{Stolz}). Now, $\mathbb{H}P^2 \times \reals^{n-8}$ satisfies 
	$C$ by assumption, hence - using Theorem \ref{thm:6} - $N$ can be equipped with a metric satisfying $C$. Of course, $M$ might be nullcobordant, in which 
	case it is obviously coborbant to the $n$-dimensional sphere $S^n$. By a theorem in \cite{GromovLawson1} $M$ can be obtained from $N$ (or $S^n$) by surgeries of 
	codimension at least $3$. This shows the claim in the case of $M$ being spin. \par
	In the non-spin case, it was also observed by Gromov and Lawson that two simply connected manifolds which are oriented cobordant can in fact be obtained from one 
	another by surgeries of codimension at least $3$. Therefore it suffices to give a list of generators of the oriented cobordism ring $\Omega^{SO}_*$, all of which 
	carry a metric satisfying $C$, if their dimension matches $n$. In the following, we will see that - again - the list proposed by Gromov and Lawson for the 
	case of positive scalar curvature suffices for our purposes (see \cite{GromovLawson1} for more details).\par
	The ring $\Omega^{SO}_*$ modulo torsion is generated by complex projective spaces $\mathbb{C}P^k$ and Milnor manifolds $H_{k,m}$ given as hypersurfaces of degree 
	$(1,1)$ in $\mathbb{C}P^k \times \mathbb{C}P^m$, $k \leq m$.  The former obviously carry a metric satisfying $C$, and so do the latter, by application of 
	Theorem \ref{thm:6} as above. In order to see this, recall that $H_{k,m}$ can be defined as
	\[
		H_{k,m} := \set{ \left([w_0, \ldots, w_k], [z_0, \ldots, z_m]\right) \in \mathbb{C}P^k \times \mathbb{C}P^m}
			{ \sum_{j=1}^k w_j z_j = 0}.
	\]
	Together with the projection $H \to \mathbb{C}P^k$ onto the first factor and using the induced metric, $H$ can be easily 
	endowed with a metric turning the projection into a Riemannian submersion, with fibers being isometric to $\mathbb{C}P^{m-1}$.
	\par
	Generators of the torsion of $\Omega^{SO}_*$ consist of two types of manifolds. The first type is a so-called Dold manifold $D_{k,m}$ defined by 
	\[
		D_{k,m} := \left(S^k \times \mathbb{C}P^m\right)/ \mathbb{Z}_2,
	\]
	where the $\mathbb{Z}_2$-action is given by $(p, [z]) \mapsto (-p, [\overline{z}])$. The obvious metric on this manifold is non-flat and has non-negative curvature 
	operator, hence it satisfies $C$, possibly after rescaling. \\
	The second type can be constructed as follows. Define $P_{k,m} := (D_{k,m} \times S^1) / \mathbb{Z}_2$ with $\mathbb{Z}_2$-action the map $([p, [z]], \phi) \mapsto 
	([r(p), [z]], -\phi)$, $r : S^k \to S^k$ being a reflection about a hyperplane. With the help of the induced projection $\psi_{k,m} : P_{k,m} \to S^1$, the second 
	type of torsion generators is constructed as
	\[
		V := \set{ (x_1, \ldots, x_l) \in P_{k_1,m_1} \times \ldots \times P_{k_l,m_l}}{\psi_{k_1,m_1}(x_1)\cdot \ldots \cdot 
				\psi_{k_l,m_l}(x_l) = 1}.
	\]
	The map $(x_1, , \ldots, x_l) \mapsto \left(\psi_1(x_1), \ldots, \psi_l(x_l)\right)$, where $\psi_j = \psi_{k_j,m_j}$, defines a submersion $\pi : V \to T^{l-1} := \set{ (t_1, \ldots, t_l) \in \left(S^1\right)^l}{ t_1\cdot \ldots \cdot t_l = 1}$. Since $P_{k,m}$ is locally isometric to $D_{k,m} \times \reals$, $V$ is locally isometric to a product of $l$ Dold manifolds and $\reals^{l-1}$ and thus satisfies $C$, again possibly after rescaling.
This completes the proof of Theorem C.
\vspace{3mm}
\par
In order to deduce Corollary D, 
we have to consider the condition 
\[
	C_{\epsilon} := \set{R \in \algcurvop{\reals^n}}{ R > -\epsilon \norm{R}},
\]
for a given $\epsilon > 0$. $C_{\epsilon}$ is not a convex condition for small $\epsilon > 0$, nevertheless we have

\begin{proposition}
	The curvature condition $C_{\epsilon}$ satisfies an inner cone condition with respect to any $0 \neq S \in \algcurvop{\reals^n}$ with nonnegative eigenvalues. In 
	particular, $C_{\epsilon}$ is stable under surgeries of codimension at least 3.
\end{proposition}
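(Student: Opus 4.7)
The plan is to reduce first to $0 < \epsilon < 1$ (for $\epsilon \geq 1$ the cone $C_\epsilon$ equals all of $\algcurvop{\reals^n}\setminus\{0\}$, so the statement is trivial); in this regime every $R \in C_\epsilon$ satisfies $\lambda_{\max}(R) = \norm{R}$, a fact that will save the final estimate. For a fixed $S \in \algcurvop{\reals^n}\setminus\{0\}$ with $S \geq 0$ (so $\tr(S) > 0$), I would take $C_\rho$ to be the open convex $\Or(n)$-invariant cone generated by $B_\rho(S)$; its elements are finite sums $\sum_i \lambda_i A_i \ast Y_i$ with $\lambda_i > 0$, $A_i \in \Or(n)$, and $Y_i \in B_\rho(S)$.

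The crucial structural step is a decomposition of every $T \in C_\rho$ as $T = T_0 + E$ with $T_0 \geq 0$ and $\norm{E} \leq \alpha \norm{T_0}$, where $\alpha := \rho N/\tr(S)$ and $N = \binom{n}{2}$. Indeed, writing $Y_i = S + X_i$ with $\norm{X_i} < \rho$, set $T_0 := \sum_i \lambda_i A_i \ast S \geq 0$ and $E := \sum_i \lambda_i A_i \ast X_i$. Trace-invariance of the $\Or(n)$-action gives $\tr(T_0) = \left(\sum_i \lambda_i\right)\tr(S)$, and positive semi-definiteness of $T_0$ yields $\norm{T_0} = \lambda_{\max}(T_0) \geq \tr(T_0)/N$, while the triangle inequality gives $\norm{E} \leq \rho \sum_i \lambda_i$; dividing yields the claimed ratio.

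Given this decomposition, for $R \in C_\epsilon$ with margin $m(R) := \lambda_{\min}(R) + \epsilon \norm{R} > 0$, the key estimate is
\begin{equation*}
	\lambda_{\min}(R + T) + \epsilon \norm{R+T} \geq m(R) + \epsilon\bigl(\norm{R + T_0} - \norm{R}\bigr) - (1+\epsilon)\norm{E},
\end{equation*}
combining $\lambda_{\min}(R + T_0) \geq \lambda_{\min}(R)$ (since $T_0 \geq 0$) with the triangle inequality $\norm{R+T} \geq \norm{R + T_0} - \norm{E}$. Two lower bounds for $\norm{R + T_0}$ feed into this: first, $\norm{R + T_0} \geq \lambda_{\max}(R + T_0) \geq \lambda_{\max}(R) = \norm{R}$ (using $\epsilon < 1$), which dominates when $\norm{T_0}$ is small; second, $\norm{R + T_0} \geq \norm{T_0} - \epsilon \norm{R}$, which takes over when $\norm{T_0}$ is large. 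Combined with $\norm{E} \leq \alpha \norm{T_0}$, a short case analysis shows the right-hand side is strictly positive whenever $\alpha$ lies below an explicit threshold that is continuous in $\norm{R}$ and $m(R)$. This determines the required $\rho = \rho(R) > 0$ continuously in $R$ with $R + C_{\rho(R)} \subset C_\epsilon$.

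For the ``in particular'' statement, Theorem B applies directly: for each $c \geq 3$, the operator $R_{S^{c-1} \times \reals^{n-c+1}} = \pi_{\bigwedge^2 \reals^{c-1}}$ is a nonzero orthogonal projection, hence has nonnegative eigenvalues, so the inner cone condition just established yields stability of $C_\epsilon$ under surgery of codimension $c$. The main obstacle throughout is the non-convexity of $C_\epsilon$: one cannot simply quote Theorem A, and the naive bound fails because $\norm{R+T}$ can be strictly smaller than $\norm{R}$; this is precisely what the inequality $\norm{R + T_0} \geq \norm{R}$ (forced by $\lambda_{\max}(R) = \norm{R}$, and thus by $\epsilon < 1$) remedies.
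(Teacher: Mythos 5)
Your proof is correct for $\epsilon \le 1$ and rests on the same two inequalities the paper uses, but it packages them differently in a way that is, if anything, cleaner. The paper fixes $S \ge 0$ with $\norm S = 1$ and shows $f(t) := \innerprod{(R+t(S+T))\omega}{\omega} + \epsilon\norm{R+t(S+T)} > 0$ along rays, splitting at $t_0 = (1+\epsilon)\norm R$; for small $t$ it uses $\norm{R+tS} \ge \norm R$ (which, as you correctly identify, is forced by $\lambda_{\max}(R)=\norm R$, hence by $\epsilon\le 1$ -- a hypothesis the paper never states), and for large $t$ it uses $\norm{R+t(S+T)} \ge -\epsilon\norm R + (1-\delta)t$ via the top eigenvector of $S$. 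Your decomposition $T=T_0+E$ with $T_0\ge 0$, $\norm E\le \alpha\norm{T_0}$, and your two lower bounds $\norm{R+T_0}\ge\norm R$ and $\norm{R+T_0}\ge\norm{T_0}-\epsilon\norm R$, recast these same two estimates with $\norm{T_0}$ playing the role of $t$. What your version actually buys is the treatment of the $\Or(n)$-invariance: you construct $C_\rho$ explicitly as the convex cone on $\Or(n)\ast B_\rho(S)$ and verify $R+C_\rho\subset C_\epsilon$ via the decomposition, whereas the paper only checks $\ball{t\delta}{R+tS}\subset C_\epsilon$, i.e.\ only a ray-cone around one representative $S$, which is neither $\Or(n)$-invariant nor, after symmetrizing, obviously convex; your trace estimate $\norm{T_0}\ge\tr(T_0)/\binom n2$ is exactly the missing bookkeeping. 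Your threshold $\alpha < m(R)/((1+\epsilon)^2\norm R)$ also agrees (up to the $N/\tr S$ normalization) with the paper's $\delta < (\epsilon-\epsilon')/(1+\epsilon)^2$.

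One small inaccuracy: your dismissal of $\epsilon\ge 1$ is not quite right. For $\epsilon>1$ one indeed has $C_\epsilon=\algcurvop{\reals^n}\setminus\{0\}$, but this does not make the inner cone condition trivially true -- in fact it fails there: take $R=-S$; then $R\in C_\epsilon$, yet any admissible $C_\rho$ contains $S$, so $0\in R+C_\rho\not\subset C_\epsilon$. For $\epsilon=1$ the identity $\lambda_{\max}(R)=\norm R$ still holds for every $R\in C_1$ (since $\lambda_{\min}(R)>-\norm R$), so your main argument extends to $\epsilon=1$ and the counterexample $R=-S$ is excluded. The paper silently has the same issue; the correct reading is that the Proposition holds for $\epsilon\le 1$, which suffices for Corollary D since $C_{\epsilon_0}\subset C_\epsilon$ for $\epsilon_0<\epsilon$. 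Finally, your appeal to Theorem B rather than Theorem A for the "in particular" part is exactly right, since $C_\epsilon$ is not convex.
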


\begin{proof}
	We show that $C_{\epsilon}$ satisfies an inner cone condition with respect to $S$, where we assume w.l.o.g. $\norm{S} = 1$. Fix $R \in C_{\epsilon}$, then $R 
	+ \epsilon' \norm{R} > 0$ for some $0 < \epsilon' < \epsilon$. We will establish the existence of a $\delta = \delta(R)$ such that $\ball{t \delta}{R + tS} \subset 
	C_{\epsilon}$ for all $t \geq 0$. Equivalently, we claim that for $T \in \algcurvop{\reals^n}$ with $\norm{T} < \delta$ and $\omega \in \bigwedge^2\reals^n$ 
	with $\norm{\omega} = 1$ the function
	\[
		f(t) := \innerprod{(R + t(S + T))(\omega)}{\omega} + \epsilon \norm{(R + t(S + T))} 
	\]
	is positive. Now, for $0 \leq t \leq t_0 := (1 + \epsilon) \norm{R}$ we estimate
	\begin{align*}
		f(t) &\geq \innerprod{R (\omega)}{\omega} - t \delta + \epsilon \left( \norm{R + tS} - t \delta \right) \\
			&\geq \innerprod{R (\omega)}{\omega} + \left( \epsilon - \delta (1 + \epsilon)^2 \right) \norm{R} \\
			&\geq \innerprod{R (\omega)}{\omega} + \epsilon' \norm{R} \\
			&> 0,
	\end{align*}
	provided $\delta < \frac{\epsilon -  \epsilon'}{(1+\epsilon)^2}$; we additionally used the inequality $\norm{R + tS} \geq \norm{R}$.\\
	Similary, in order to arrive at an analogous estimate for $t \geq t_0$, we first choose $\omega_0 \in \bigwedge^2 \reals^n$, $\norm{\omega_0} = 1$, with 
	$\innerprod{S(\omega_0)}{\omega_0} = 1$. Then
	\[
		\norm{R + tS + tT} \geq \innerprod{R (\omega_0)}{\omega_0} + (1- \delta) t \geq -\epsilon \norm{R} + (1-\delta)t,
	\]
	and hence 
	\[
		f(t) \geq \innerprod{R (\omega)}{\omega} - \delta t + \epsilon \left( (1-\delta)t -\epsilon \norm{R} \right) 
			= \innerprod{R (\omega)}{\omega} - \epsilon^2 \norm{R} + \left( \epsilon - \delta (1+ \epsilon) \right) t.
	\]
	For $\delta < \frac{\epsilon}{1 + \epsilon}$ the coefficient in front of $t$ is positive, thus we obtain using $t \geq t_0 = (1 + \epsilon )\norm{R}$
	\[
		f(t) \geq \innerprod{R (\omega)}{\omega} + \left( \epsilon - \delta (1 + \epsilon)^2 \right) \norm{R} 
			\geq \innerprod{R (\omega)}{\omega} + \epsilon' \norm{R} 
			> 0,
	\]
	as above. This finishes the proof.
\end{proof}

Putting these things together, Corollary D follows.

\section{Equvariant Surgery}
\label{sec:equivariant}

There exist corresponding equivariant versions of the above surgery and gluing theorems A and B.

\begin{theorem}
	\label{thm:10}
	Let $C$, $N_i^l \subset M_i^n$ and $\phi$ be given as in Theorem B. 
	Additionally, suppose $G$ is a compact Lie group 
	acting isometrically on $M_i^n$, $i=1,2$, so that $\gamma(N_i) = N_i$ for all $\gamma \in G$ and such that $\phi$ is 
	$G$-equivariant with respect to the induced actions of $G$ on $\nu N_i$.
	\par
	Then, $M_1 \#_{\phi} M_2$, the joining of $M_1$ with $M_2$ along $\phi$, carries a metric satisfying $C$ on which $G$ acts 
	isometrically.
\end{theorem}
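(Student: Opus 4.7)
The plan is to rerun the proof of Theorem B verbatim, being careful that every choice made along the way is $G$-equivariant. The underlying observation is that Theorem \ref{thm:12} depends on essentially three pieces of auxiliary data: the metrics $g_{N}$ and $g_{\nu N}$, the connection $\nabla$ on $\nu N$, and the radial bending function $\theta(s)$. None of these carries information depending on $\nu \in \nu^1 N$ in a non-tensorial way, so once they are chosen $G$-invariantly on each $M_i$, the resulting deformed metric $g_{D_i}$ will be $G$-invariant as well.

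Concretely, I would first fix the data on $M_2$ by averaging an arbitrary choice of $g_{N_2}$ and $g_{\nu N_2}$ over the compact group $G$ and then taking a $G$-invariant connection $\nabla^2$ (for instance the one obtained from averaging an arbitrary metric connection). By the hypothesis that $\phi : \nu N_1 \to \nu N_2$ is $G$-equivariant, the pullbacks $g_{N_1} := \phi^*g_{N_2}$, $g_{\nu N_1} := \phi^*g_{\nu N_2}$ and $\nabla^1 := \phi^*\nabla^2$ are automatically $G$-invariant; moreover, by construction the connection metrics $h_1$ on $\nu^r N_1$ and $h_2$ on $\nu^r N_2$ correspond under $\phi$, which is what the gluing step will later require. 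The angular function $\theta(s)$ depends only on $s$ and the uniform bounds $C_1$, $C_2$ of Proposition \ref{prop:1}, which can be taken to be $G$-invariant (by compactness of $G$ and of $D(\overline{r})$), so $\theta$ is trivially $G$-invariant.

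With these choices, the mapping $\gamma(\nu, s) = (\exp(r(s)\nu), t(s))$ from $\nu^1N \times \mathbb{R}^{\geq 0}$ into $M \times \mathbb{R}$ is $G$-equivariant when $G$ acts trivially on the $\mathbb{R}$-factor, because $\exp$ commutes with isometries and $r,t$ depend only on $s$. Hence the image $D \subset M \times \mathbb{R}$ is a $G$-invariant submanifold, and the metric $g_D$ induced from the product metric (which is $G$-invariant) is itself $G$-invariant. The smoothing of Lemma \ref{lem:step3} goes through equivariantly too: since both $g_M\rline{}{B}$ and the connection metric $h\rline{}{B}$ are $G$-invariant, a convex-combination interpolation $g(t)$ between them is $G$-invariant for every $t$, and so is the tube metric it yields. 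Performing this equivariant deformation on both $M_1$ and $M_2$ with a common final radius $r$ and gluing along the $G$-equivariant map $\overline{\phi}$ produces a $G$-invariant metric on $M_1 \#_\phi M_2$ satisfying $C$.

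The main thing to verify is thus that no step introduces a non-invariant element; since every construction in the proof of Theorem \ref{thm:12} is natural with respect to bundle automorphisms of $\nu N$ covering isometries of $N$, this verification is routine. The only mildly delicate point is the matching of data across $\phi$, which is handled by pulling back the $G$-invariant data from $M_2$ to $M_1$; this ensures the product collars on the two sides are identified isometrically by $\overline{\phi}$ and so the glued metric is smooth and $G$-invariant.
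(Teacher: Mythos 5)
Your proposal is correct and takes essentially the same route as the paper: the paper packages the equivariant deformation as Theorem \ref{thm:13}, observes (via a short proposition) that isometries of $(M,g_M)$ preserving $N$ extend uniquely to isometries of $(D,g_D)$ so that Steps 1 and 2 go through unchanged, and then makes Step 3 equivariant by averaging the interpolating family $\tilde g(t)=\int_G \gamma^*g(t)\,dm(\gamma)$. Your variant — choosing $g(t)$ as a convex combination of the two $G$-invariant endpoint metrics, which is $G$-invariant by inspection, and pulling back the normal-bundle data from $M_2$ to $M_1$ via $\phi$ so the collars match — is a valid and slightly more streamlined way to arrange the same thing.
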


$G$-equivariant surgery is a special case of the joining of two $G$-manifold via a $G$-equivariant vector bundle isomorphism. More precisely, we have

\begin{definition}
	Suppose there are a closed subgroup $H \subset G$ and orthogonal representations of $H$ on $\reals^c$ and $\reals^{d+1}$, which induce actions on $S^d \subset 
	\reals^{d+1}$ and $S^{d+c} \subset \reals^{d+1} \times \reals^{c}$. Let $N$ be a submanifold of an $n$-dimensional manifold $M^n$, $N$ being $G$-equivariantly 
	diffeomorphic to $G \times_H S^{d}$.
	If there is a $G$-equivariant isomorphism of the normal bundles of $N$ and $G \times_H 	S^{d} \subset G \times_H 	S^{d+c}$, then \emph{$G$-equivariant surgery of 
	dimension $d$ and codimension $c$}, denoted $\chi_G(M, N)$, is given by joining of $G \times_H S^{d+c}$ and $M$ along this isomorphism.
\end{definition}

Since the normal bundle of $G \times_H S^{d}$ in $G \times_H S^{d+c}$ is equivariantly diffeomorphic to $G \times_H \left(S^{d} \times \reals^c \right)$, the submanifold $N \subset M$ as above is required to admit a tubular neighborhood which is equivariantly diffeomorphic to $G \times_H \left(S^{d} \times D^c\right)$ for some open ball $D^c \subset \reals^c$. This leads to a somewhat more common (but equivalent) way of stating equivariant surgery: After removing a region like $G \times_H \left(S^{d} \times D^c\right)$, a new region like $G \times_H \left(\bar{D}^{d+1} \times S^{c-1}\right)$ is pasted in along the common boundary:
\[
	\chi_G(M, N) = \left[M \backslash G \times_H \left(S^{d} \times D^c\right)\right] \cup_{G \times_H \left(S^{d} \times S^{c-1}\right)}
		\left[G \times_H \left(\bar{D}^{d+1} \times S^{c-1}\right)\right].
\]

Since $G \times_H S^{c+d}$ is naturally a fiber bundle over $G/H$  with fibers diffeomorphic to $S^{c+d}$, the construction outlined in Theorem \ref{thm:6} yields a $G$-invariant metric such that $C$ as above is satisfied. We therefore deduce from Theorem \ref{thm:10} the following

\begin{theorem}
	\label{thm:11}
	Let $C \subset \algcurvop{\reals^n}$ be a curvature condition satisfying an inner cone condition with respect to 
	$R_{S^{c-1} \times \reals^{n-c+1}}$, $c \in \{3,\ldots,n\}$. Let $(M^n, g_M)$ be a Riemannian manifold satisfying $C$ and suppose there is a 
	compact Lie group $G$ acting isometrically on $M$.
	\par Then a manifold 
	obtained from $M^n$ by performing $G$-equivariant surgery of codimension at least $c$ also admits a metric satisfying $C$.
\end{theorem}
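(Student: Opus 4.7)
The plan is to follow the template by which Theorem A was deduced from Theorems B and \ref{thm:12}, replacing each ingredient by its equivariant counterpart. By the discussion immediately preceding the statement, $G$-equivariant surgery of codimension $c$ on $N \subset M$ with $N \cong_G G \times_H S^d$ is realized as the equivariant joining $M \#_\phi (G \times_H S^{c+d})$ along a $G$-equivariant isomorphism $\phi$ between the normal bundle $\nu N$ in $M$ and the normal bundle of $G \times_H S^d \subset G \times_H S^{c+d}$. Since $M$ already carries a $G$-invariant metric satisfying $C$, Theorem \ref{thm:10} reduces the problem to exhibiting a $G$-invariant Riemannian metric on the model $G \times_H S^{c+d}$ that satisfies $C$.

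To construct such a model metric I would proceed as follows. Fix a bi-invariant metric on $G$ and the round metric of some chosen radius on $S^{c+d}$; this equips $G \times_H S^{c+d}$ with a $G$-invariant Riemannian submersion structure over $G/H$ whose fibers are isometric to a round sphere $S^{c+d}$. Because the vertical/horizontal splitting is preserved by the $G$-action, the vertical rescaling of Theorem \ref{thm:6} is automatically $G$-equivariant. To apply Theorem \ref{thm:6} I need an inner cone condition for $C$ with respect to $R_{S^{c+d} \times \reals^{n-c-d}}$, where $n = \dim(G/H) + c + d$. Starting from the assumed inner cone condition at $R_{S^{c-1} \times \reals^{n-c+1}}$, this is obtained by $d+1$ consecutive applications of Proposition \ref{prop:11} --- all legal since $c-1 \geq 2$ by hypothesis and $c+d \leq n$ by the dimension count. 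Hence for $t$ small enough the vertically rescaled metric $g^t$ on $G \times_H S^{c+d}$ satisfies $C$ and is $G$-invariant.

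Feeding this $G$-invariant model metric into Theorem \ref{thm:10} produces the desired $G$-invariant metric on $\chi_G(M, N)$ satisfying $C$. The only nontrivial point is the iterated application of Proposition \ref{prop:11}, and this is precisely where the hypothesis $c \geq 3$ becomes essential. All other pieces --- the $G$-equivariance of the vertical rescaling and of the tubular deformation underlying Theorems \ref{thm:6} and \ref{thm:10} --- come essentially for free once the initial data (the metrics on $G$, on $S^{c+d}$, and on the normal bundles being glued) are chosen $G$-invariantly, since every step in those constructions (the bending angle $\theta(s)$, the bump $\eta_l$, the endpiece smoothing, the fibrewise rescaling factor $t$) depends only on the radial/vertical distance and is therefore compatible with any isometric $G$-action fixing $N$.
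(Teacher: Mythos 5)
Your proposal is correct and follows exactly the route the paper takes: realize $G$-equivariant surgery as an equivariant joining of $M$ with the model $G \times_H S^{c+d}$, endow that model with a $G$-invariant Riemannian submersion over $G/H$ with round $S^{c+d}$ fibers, vertically rescale via Theorem \ref{thm:6} to obtain a $G$-invariant metric satisfying $C$, and then invoke Theorem \ref{thm:10}. Your explicit bookkeeping via Proposition \ref{prop:11} (the $d+1$ iterations to pass from the inner cone condition at $R_{S^{c-1}\times\reals^{n-c+1}}$ to one at $R_{S^{c+d}\times\reals^{n-c-d}}$, legitimate because $c-1\geq 2$ and $c+d\leq n$) is a useful elaboration of a step the paper leaves implicit.
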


\begin{remark}
	The validity of Theorem \ref{thm:11} in the case of positive scalar curvature was first observed in \cite{Bergery} (cf. also \cite{Hanke}).
\end{remark}

The \emph{proof} of Theorem \ref{thm:10} consists of an application of the following equivariant analogue of Theorem \ref{thm:12} which is  
formulated in 

\begin{theorem}
	\label{thm:13}
	Additionally to the assumptions of Theorem \ref{thm:12}, let $G$ be a compact Lie group which acts isometrically on $(M, g_M)$,
	leaving $N$ invariant. Furthermore, suppose that $G$ acts isometrically on $(N, g_N)$ as well as on $(\nu N, g_{\nu N})$ (by 
	the induced action) and that the connection $\nabla$ commutes with this action.
	\par
	Then, for $\overline{r} > 0$ there is $\underline{r} \in (0, \overline{r})$ such that for every $r \in (0, \underline{r})$ there exists a complete metric $g_D$ on the open 
	manifold $D := M \backslash N$ with the following properties:
	\begin{enumerate}
		\item $g_D$ satisfies $C$.
		\item $g_D$ coincides with $g_M$ on $M \backslash D(\overline{r})$, where $D(\overline{r}) = \set{x \in M}{d_{g_M}(x, N) < \overline{r}}$.
		\item In a neighborhood $U \subset M$ of $N$ the region $(U \backslash N, g_D)$ is isometric to 
			\[
				\left(\nu^rN \times (0, \infty), \rline{h}{\nu^rN} + dt^2 \right),
			\]
			where $h$ is the connection metric determined by $g_N$, $g_{\nu N}$ and $\nabla$.
		\item $G$ acts isometrically on $(D, g_D)$. Moreover, for any $t_0 \in (0, \infty)$, the submanifold $\nu^rN \times 
				\{t_0\} \subset (U \backslash N, g_D)$ is invariant under the $G$-action.
	\end{enumerate}
\end{theorem}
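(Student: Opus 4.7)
The plan is to re-run the proof of Theorem \ref{thm:12} and verify that each step respects the given $G$-action. The key observation will be that the construction is almost mechanically $G$-equivariant: the only choices made along the way are a scalar angular function $\theta(s)$ depending only on $s$, a final radius $r$, and in Step 3 a one-parameter family of metrics interpolating between $g_M|_B$ and the connection metric $h|_B$. All other ingredients --- the exponential map $\exp : \nu N \to M$, the distance tubes $T(r)$, the vertical/horizontal splitting along $\nu N$, and the connection metric $h$ --- are intrinsically determined by data that is $G$-equivariant by hypothesis.

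First I would observe that $\exp$ is $G$-equivariant, which follows from $G$-invariance of $g_M$ together with $G$-invariance of $N$. Letting $G$ act trivially on the $\reals$-factor of $M \times \reals$, the model curve $\gamma(\nu, s) = (\exp(r(s)\nu), t(s))$ is then $G$-equivariant, so that $D \subset M \times \reals$ is $G$-invariant and its induced metric $g_D$ is $G$-invariant. All the curvature bounds driving the choice of $\theta$ in Steps 1 and 2 --- $\sup_{D(\overline{r})} \norm{R_M}$, the constants $C_1, C_2$, and the compact set $K \subset C$ --- are automatically $G$-invariant, so the same scalar $\theta : [0,\infty) \to [0,\pi/2]$ produced by Lemma \ref{lem:step2} serves, and the resulting metric continues to satisfy $C$.

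The one place where a genuine choice has to be made equivariantly is Step 3, and this is the main point to check. There one interpolates between $g_M|_B$ and $h|_B$ through a family $g(t)$ and then applies Proposition \ref{prop:2} to the cylinder $B \times [0,1]$. Both endpoint metrics are $G$-invariant (the former by hypothesis, the latter because $g_N$, $g_{\nu N}$, and $\nabla$ are all $G$-equivariant), so one may use a $G$-invariant convex combination $g(t) = (1-\varphi(t)) g_M|_B + \varphi(t)\, h|_B$ for a smooth cutoff $\varphi$. Each $g(t)$ is then $G$-invariant, hence $g(t) + dt^2$ on $B \times [0,1]$ is $G$-invariant, and so are the subsequent distance tube $\tilde{T}(r)$ and its induced metric. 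Property (iv) will then be immediate: on the product region $(\nu^r N \times (0,\infty), h|_{\nu^r N} + dt^2)$ the group $G$ acts only on the first factor, hence each slice $\nu^r N \times \{t_0\}$ is $G$-invariant. The hardest point is thus the $G$-invariant interpolation in Step 3; once that is in place, the remainder of the proof of Theorem \ref{thm:12} transfers verbatim.
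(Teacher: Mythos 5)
Your proposal is correct and follows essentially the same strategy as the paper's proof: recognize that Steps 1 and 2 are automatically equivariant, and adjust Step 3 so that the interpolation between $g_M|_B$ and $h|_B$ is $G$-invariant. The paper phrases the first part slightly differently --- it proves a small proposition showing that any isometry $f$ of $(M, g_M)$ with $f(N)=N$ extends uniquely to an isometry $\tilde f := \rline{\bar f}{D}$ of $(D, g_D)$, using $\bar f(\gamma(\nu,s)) = \gamma(df(\nu), s)$; this is the same observation as your remark that the model curve $\gamma$ is $G$-equivariant, just packaged as an extension lemma. For Step 3 the paper does not build a $G$-invariant interpolation directly as you do with the convex combination $(1-\varphi(t))\,g_M|_B + \varphi(t)\,h|_B$; instead it takes the (possibly non-invariant) family $g(t)$ already constructed and averages it over $G$, $\tilde g(t) = \int_G \gamma^* g(t)\, dm(\gamma)$, noting that $\tilde g(t)$ agrees with $g(t)$ near $t=0$ and $t=1$ because there $g(t)$ is already $G$-invariant. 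Your convex-combination interpolation is a cleaner, equally valid way to achieve the same thing --- it sidesteps the averaging entirely --- provided you note (as your cutoff implicitly ensures) that $\varphi$ is constant near $t=0$ and $t=1$, which is needed for the boundary of $B \times [0,1]$ to be totally geodesic so Proposition \ref{prop:2} applies. The one ingredient you cite but do not expand on --- the $G$-invariance of the connection metric $h$ --- is verified explicitly in the paper from the equivariance of $g_N$, $g_{\nu N}$ and $\nabla$; it would be worth a sentence.
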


The proof of Theorem \ref{thm:12} given in section \ref{sec:mainproof} carries over to the equivariant case almost verbatim, due to the following observation.

\begin{proposition}
	Let $(D, g_D) \subset (M \times \reals, g_M + dt^2)$ be given by \eqref{eq:38} with $\gamma(\nu, s)$ being constructed as in the first two steps of the proof of 
	Theorem 	\ref{thm:12} in section \ref{sec:mainproof}. Suppose $f : (M, g_M) \to (M, g_M)$ is an isometry with $f(N) = N$.	Then, there is a unique isometric 
	extension $\tilde{f} : (D, g_D) \to (D, g_D)$ coinciding with $f$ on $M \backslash D(\overline{r})$. 
\end{proposition}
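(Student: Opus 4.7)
The plan is to realize $\tilde{f}$ as the restriction to $D$ of the obvious product isometry $F := f \times \id_{\reals}$ of $(M \times \reals, g_M + dt^2)$. Since the metric $g_D$ is by construction the one induced from $g_M + dt^2$, it suffices to show that $F$ maps $D$ into itself; the restriction $F|_D$ is then automatically an isometry of $(D, g_D)$ that coincides with $f$ on the $t = 0$ slice $M \backslash D(\overline{r})$.

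The heart of the argument is the observation that in Steps 1 and 2 of the proof of Theorem \ref{thm:12}, the angular function $\theta$ — and hence the parameters $r(s)$, $t(s)$ defined by \eqref{eq:5} — depends only on the arc-length parameter $s$ and not on the normal direction $\nu \in \nu^{1} N$. Since $f$ is an isometry with $f(N) = N$, its differential preserves the orthogonal decomposition $TM|_N = TN \oplus \nu N$ and restricts to a bundle isometry $df : (\nu N, g_{\nu N}) \to (\nu N, g_{\nu N})$. By naturality of the normal exponential map,
\[
    f(\exp(r\nu)) = \exp(r \, df(\nu)), \qquad \nu \in \nu^{1} N,\ 0 \leq r < \overline{r}.
\]
Substituting this into the definition $\gamma(\nu, s) = (\exp(r(s)\nu), t(s))$ yields
\[
    F(\gamma(\nu, s)) = \gamma(df(\nu), s) \in D,
\]
so the horn portion of $D$ is preserved. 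The complementary region, identified with $M \backslash D(\overline{r})$ at $t = 0$, is preserved because $f$ maps $D(\overline{r})$ to itself. Hence $F(D) = D$ and $\tilde{f} := F|_{D}$ is the desired isometric extension.

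Uniqueness is a routine rigidity argument: any isometric extension must agree with $\tilde{f}$ on the open subset $M \backslash D(\overline{r}) \subset D$, and an isometry of a Riemannian manifold is determined by its restriction to any nonempty open subset of each connected component. As every connected component of $D$ meets the exterior region $M \backslash D(\overline{r})$, this forces the extension to be unique. I do not anticipate any substantive obstacle here; the whole argument rests on the rotational symmetry of the bending procedure, encoded in the $\nu$-independence of $\theta$, together with the elementary naturality of $\exp$ under isometries fixing $N$ setwise.
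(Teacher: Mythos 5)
Your argument is the same as the paper's: the paper also defines $\bar f(p,t)=(f(p),t)$, uses naturality of the normal exponential map under the isometry $f$ fixing $N$ setwise to get $\bar f(\gamma(\nu,s))=\gamma(df(\nu),s)$, concludes $\bar f(D)=D$, and restricts. You additionally spell out the uniqueness via the standard fact that an isometry is determined on a connected component by its restriction to a nonempty open set, which the paper leaves implicit; that addition is correct given the codimension assumption $k\leq n-3$ ensures removing $N$ does not disconnect $M$.
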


\begin{proof}
	Consider $\bar{f} :M \times \reals \to M \times \reals$, $(p,t) \mapsto (f(p), t)$. Since $f(N) = N$, $f$ maps geodesics which start orthogonal to $N$ to 
	geodesics of the same type, thus we have $\bar{f}(\gamma(\nu, s)) = \gamma(df(\nu), s)$. This immediately implies $\bar{f}(D) = 
	D$ by construction of $D$. Since $\bar{f}$ is an isometry, so is its restriction $\tilde{f} := \rline{\bar{f}}{D}$.
\end{proof}

Thus the first two construction steps can be reused, only the blending step needs to be adjusted in order to ensure that the induced action of $G$ be isometric. This is accomplished by averaging the family $g(t)$ of metrics in step $3$ which hence is being replaced by
\[
	\tilde{g}(t) = \int_G \gamma^*g(t) \, dm(\gamma).
\]
This averaged metric $\tilde{g}(t)$ coincides with $g(t)$ where it is already $G$-invariant, namely for $t \in [0, \epsilon]$ as well as for $t \in [1- \epsilon, 1]$. The former holds because the metric equals the induced metric of a distance tube around $N$, the latter holds since the connection metric $h$ is made up of equivariant pieces. Indeed, if $g \in G$, then $g$ acts on $\nu N$ via the differential $dg : \nu N \to \nu N$. Then the differential of $dg$ maps the horizontal distribution $\horzdistr_{\nu} \subset T_{\nu}\nu N$, which is determined by the connection, onto $\horzdistr_{dg(\nu)}$, since $\nabla$ commutes with $G$. Now, $dg$ preserves both the fiber metric on the horziontal distribution as well as the one given on the vertical distribution, which implies the preservation of $h$ under the action of $G$.\par
This completes the proof of Theorem \ref{thm:13}.

\section{Surgery of conformally flat manifolds}
\label{sec:conformallyflat}

A similar surgery result holds for the class of conformally flat manifolds, but - in contrast to the other cases -, here only $0$-surgery, i.e. connected sum constructions, can be expected, since $S^{n-l} \times \reals^{l}$, $l > 0$, $n \geq 3$, is conformally flat precisely for $l = 1$.

\begin{theorem}
	\label{thm:4}
	Let $C \subset \algcurvop{\reals^n}$ be a curvature condition satisfying an inner cone condition with respect to 
	$R_{S^{n-1} \times \reals}$. Assume that $C$ is star-shaped with respect to $0$. Suppose $(M^n_i, g_{M_i})$, $i=1,2$, are $n$-dimensional conformally flat 
	Riemannian manifolds satisfying $C$. Then the connected sum $M_1 \# M_2$ also admits a conformally flat metric satisfying $C$.
\end{theorem}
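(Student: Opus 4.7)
The plan is to perform the connected sum by conformally attaching a long, thin round cylindrical neck between the two manifolds, carrying out the whole construction inside the conformal class of the flat metric on local charts around chosen points. The cylinder $S^{n-1}(r_0) \times \reals$ has curvature operator $r_0^{-2} R_{S^{n-1} \times \reals}$, so for $r_0$ small the inner cone condition places it deeply inside $C$ with a neighborhood of size $\sim \rho r_0^{-2}$; this is the room that absorbs the conformal-change perturbations in the transition from $g_{M_i}$ to the neck metric. Star-shapedness of $C$ with respect to $0$ provides the extra flexibility needed to rescale freely under constant conformal multiplications.

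Concretely, pick points $p_i \in M_i$ and conformally flat coordinate charts $\phi_i : B_\epsilon(0) \to M_i$ with $\phi_i(0) = p_i$ and $\phi_i^* g_{M_i} = e^{2 u_i} g_{\mathrm{flat}}$. In cylindrical coordinates $\tau = -\log|x|$, $\theta = x/|x|$ on the punctured ball, the flat metric reads $e^{-2 \tau} g_{\mathrm{cyl}}$, where $g_{\mathrm{cyl}} := d\tau^2 + g_{S^{n-1}}$, so $\phi_i^* g_{M_i} = e^{2(u_i - \tau)} g_{\mathrm{cyl}}$. Fix $r_0 > 0$ small and $L \gg \log(1/r_0)$, and construct a smooth function $w_i(\tau, \theta)$ which equals $u_i(e^{-\tau}\theta) - \tau$ for $\tau \le \tau_0 := -\log \epsilon$ and the constant $\log r_0$ for $\tau \ge \tau_0 + L$, monotonically interpolating in between. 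Then $\tilde{g}_i := e^{2 w_i} g_{\mathrm{cyl}}$ is conformally flat, agrees with $g_{M_i}$ outside a neighborhood of $p_i$, and is isometric to the round half-cylinder $S^{n-1}(r_0) \times [\tau_0 + L, \infty)$ on its end. Joining the two resulting half-cylinders along a common $S^{n-1}(r_0)$ cross-section produces a conformally flat metric on $M_1 \# M_2$.

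The verification that $\tilde{g}_i$ satisfies $C$ splits into three parts. Outside the deformation it holds by hypothesis. On the cylindrical tail the curvature operator is $r_0^{-2} R_{S^{n-1} \times \reals}$, which for $r_0$ small lies in $C$ by the inner cone condition combined with star-shapedness at $0$. In the transition region the conformal-change formula (via the Schouten tensor, using Weyl flatness) gives
\[
R_{\tilde{g}_i} \;=\; e^{-2 w_i}\bigl(R_{g_{\mathrm{cyl}}} \,+\, \mathcal{E}(\nabla w_i, \nabla^2 w_i)\bigr),
\]
with $R_{g_{\mathrm{cyl}}} = R_{S^{n-1} \times \reals}$ and $\mathcal{E}$ a universal polynomial in the first two derivatives of $w_i$. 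Taking $L \gg \log(1/r_0)$ keeps $|\nabla w_i|$ uniformly bounded and forces $|\nabla^2 w_i|$ to be small, so $\mathcal{E}$ stays bounded independently of $r_0$; the leading term $e^{-2 w_i} R_{S^{n-1} \times \reals}$ is a positive multiple of the inner-cone direction, and the bounded perturbation fits inside the inner-cone neighborhood.

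The main obstacle is the delicate $M_i$-end of the transition, where $e^{-2 w_i}$ is only of order one and the inner-cone margin is correspondingly small. Here openness of $C$ together with $R_{M_i} \in C$ is used to accommodate a small conformal-change perturbation, while the far end of the transition is handled by the cylinder curvature being deep inside $C$. Bridging these two regimes by an appropriately slow interpolation, and using star-shapedness with respect to $0$ to absorb the intermediate scalar rescalings, is the technical heart of the argument.
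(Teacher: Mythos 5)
Your overall framework—glue a round cylinder into a conformally flat chart, keep everything inside one conformal class, and let the inner cone around $R_{S^{n-1}\times\reals}$ absorb the deformation—is the same as the paper's, and your cylindrical substitution $\tau=-\log|x|$ is essentially the paper's change of variables $s = -c\log r$. But the decomposition you write down hides exactly the place where the argument is hard, and the claim you make about it is false.

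The problem is the equation $R_{\tilde g_i}=e^{-2w_i}\bigl(R_{g_{\mathrm{cyl}}}+\mathcal{E}(\nabla w_i,\nabla^2 w_i)\bigr)$ with $\mathcal{E}$ ``bounded'' and the cylinder term declared ``leading.'' Work it out for a purely radial profile $w=\psi(\tau)$ on $(S^{n-1}\times\reals,g_{\mathrm{cyl}})$: one finds $S_{e^{2\psi}g_{\mathrm{cyl}}}=(1-(\psi')^2)S_{g_{\mathrm{cyl}}}-\psi''\,d\tau^2$, hence as a curvature operator
\[
R_{e^{2\psi}g_{\mathrm{cyl}}}=e^{-2\psi}\Bigl[\bigl(1-(\psi')^2\bigr)R_{S^{n-1}\times\reals}-\psi''\,P\Bigr],\qquad P:=\bigl[g_{\mathrm{cyl}}\owedge d\tau^2\bigr],
\]
so the genuine cylinder-direction coefficient is $1-(\psi')^2$, not $1$. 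At the $M_i$-end of your transition $\psi'\to -1$ (the metric is flat there), so this coefficient degenerates to $0$, and in your notation $\mathcal{E}\to -R_{S^{n-1}\times\reals}$, which has norm $1$. No choice of $L$ makes this small; ``$|\nabla^2 w_i|$ small'' controls only the $\psi''P$ piece, not the $(\nabla w_i)^2$ piece that is responsible for cancelling $R_{\mathrm{cyl}}$ when the metric is flat. In the regime where $e^{-2w_i}$ is $O(1)$ the inner cone condition only lets you absorb an $O(\rho)$ perturbation of $R_{S^{n-1}\times\reals}$, not a perturbation of norm one, so ``the bounded perturbation fits inside the inner-cone neighborhood'' is wrong precisely where you flag the ``delicate $M_i$-end.''

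What is missing is the calibration that the paper makes explicit (Proposition \ref{prop:10} and Lemma \ref{lem:7}): keep the background curvature $u^{-2}\tilde R_M^\lambda$ as a separate term, write the bending contribution as $\frac{\alpha(2-\alpha)}{r^2}R_{S^{n-1}\times\reals}$ plus error terms of size $\frac{|\alpha'|}{r}$ and $\frac{\alpha}{r}$, and then demand the differential inequality $|\alpha'|\le c\,\frac{\alpha(2-\alpha)}{r}$ for the bending profile $\alpha=1+\psi'$. This is not ``slow in absolute terms''—near $\alpha=0$ it forces $\alpha'\lesssim\alpha$, i.e.\ exponentially slow growth—and the inequality cannot even start from $\alpha=0$: one first needs a separate initial bending step, using openness of $C$ in a fixed neighborhood of $R_{M_i}$, to push $\alpha$ to some small $\tau_{\min}>0$, and only afterwards does the inner-cone inequality take over. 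Your required transition length therefore scales like $|\log\tau_{\min}|$ (set by the geometry of $M_i$ near $p_i$), not like $\log(1/r_0)$. The paper also has to do one more thing you gloss over: blending the chart's conformal factor $v$ to a constant is done separately (via $v_\lambda$), and only after the bend is complete (so the error $O(\lambda^{-1})$ it introduces is dominated by the cylinder scale $r^{-2}$, since $r\le\lambda$ there). Your single function $w_i$ would need to do this blending without ever losing the $\alpha(2-\alpha)$ margin; that ordering is a genuine constraint and needs to be addressed, not absorbed into ``appropriate interpolation.''
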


\begin{remark}
	The validity of Theorem \ref{thm:4} in the case of positive scalar curvature was noticed in \cite{SchoenYau1}.
\end{remark}

Conceivably, Theorem \ref{thm:4} follows directly from

\begin{theorem}
	\label{thm:8}
	Let $C \subset \algcurvop{\reals^n}$ be a curvature condition satisfying an inner cone condition with respect to $R_{S^{n-1} 
	\times \reals}$ and suppose $C$ is star-shaped with respect to the origin. Let $(M^n, g)$ be a conformally flat manifold 
	satisfying $C$. Given $p \in M$ and any 	$\epsilon > 0$ there exists $\delta > 0$  such that for any $\gamma \in (0, \delta)$ there exists a positive 
	function $\sigma \in C^{\infty}(M \backslash \{p\})$ which defines a complete 
	metric $g_D := \sigma^2 g$ on the open manifold $D := M \backslash \{ p \}$ with the following properties:
	\begin{enumerate}
		\item \label{enum:thm:8-1} $(D, g_D)$ is conformally flat and satisfies $C$.
		\item \label{enum:thm:8-2} $g_D$ coincides with $g$ on $M \backslash D(\epsilon)$, where $D(\epsilon) = \set{x \in M}{d^{g}(p,x) < \epsilon}$.
		\item \label{enum:thm:8-3} There exists a neighborhood $U \subset D(\epsilon)$ of $p$ such that
			$\left( U \backslash \{ p \}, g_D \right)$ is isometric to
			$\left(S^{n-1}(\gamma) \times (0, \infty)\right)$ with $S^{n-1}(\gamma)$ being equipped with the standard metric of constant curvature $\gamma^{-2}$.
	\end{enumerate}
\end{theorem}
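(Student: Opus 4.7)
The plan is to use conformal flatness to identify the problem locally with a conformal deformation on a half-cylinder, and then to run an analogue of the bending argument of Theorem \ref{thm:12} with $\log\sigma$ playing the role of the angle. First I would choose a conformally flat chart identifying $D(\epsilon_0)$ with a Euclidean ball $B_{r_0}(0)\subset\reals^n$ with $p\mapsto 0$ and $g=\phi^2\sum(dx^i)^2$ for some positive $\phi$. In polar coordinates $(r,\omega)$ and with $t:=-\log(r/r_0)$, the identity $\sum(dx^i)^2=r^2(dt^2+g_{S^{n-1}})$ gives
\[
	g=u^2 h_0,\qquad u(t,\omega):=r\phi(r,\omega),\qquad h_0:=dt^2+g_{S^{n-1}},
\]
on a half-cylinder $S^{n-1}\times[t_0,\infty)$, where $t\to\infty$ corresponds to $p$ and $u$ decays exponentially in $t$. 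Any conformal change $g_D=\sigma^2 g$ can then be rewritten as $g_D=v^2 h_0$ with $v=\sigma u$, so the entire problem becomes to construct a positive function $v$ on this cylinder such that $v=u$ for $t\le t_0$ (matching $g$ outside the neck), $v\equiv\gamma$ for $t\ge T$ for some $T$ (yielding the round cylinder $S^{n-1}(\gamma)\times\reals$), and the $\Or(n)$-pullback of $R_{g_D}$ lies in $C$ throughout.

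Next I would analyze the curvature via the standard conformal change formula. Writing $f=\log v$, the pullback factors as
\[
	\iota^* R_{g_D} \;=\; v^{-2}\bigl(R_{S^{n-1}\times\reals}+E(f,\nabla_{h_0}f,\nabla^2_{h_0}f)\bigr),
\]
where $R_{S^{n-1}\times\reals}$ is the scale-normalized cylinder operator and $E$ is a Schouten-type Kulkarni--Nomizu tensor linear in $\nabla^2_{h_0}f$ and quadratic in $\nabla_{h_0}f$. The inner cone condition with respect to $R_{S^{n-1}\times\reals}$ supplies $\rho>0$ with $K+C_\rho\subset C$ for each compact $K\subset C$, where $C_\rho\supset\ball{\rho}{R_{S^{n-1}\times\reals}}$, and combined with the star-shapedness of $C$ about $0$ this yields a quantitative criterion of the form $\|E\|<\rho$ for $v^{-2}\bigl(R_{S^{n-1}\times\reals}+E\bigr)\in C$ in the constant-$v$ tail, together with an analogous but finer criterion in the transition region bounding the deviation of $\iota^*R_{g_D}$ from $\iota^*R_g$.

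I would then construct $v$ in three stages, exactly patterned on the proof of Theorem \ref{thm:12}, with $f=\log v$ playing the role of the bending angle $\theta$. Stage 1 bends $v$ slightly off $u$ over a short initial segment $[t_0,t_0+\tau_0]$, keeping the error $E$ so small that $\iota^*R_{g_D}$ remains within a compact neighborhood of $\iota^*R_g$ inside $C$. Stage 2 inductively drives $v$ toward $\gamma$: on each interval of length on the order of $v/|v'|$ we shrink $v$ by a fixed factor, arranging $|f'|_{h_0}$ and $|f''|_{h_0}$ to satisfy the analogue of the bending bound \eqref{eq_1}, which by the criterion above keeps $\iota^*R_{g_D}$ inside $C$; since $u$ decays exponentially in $t$ while $\gamma$ is fixed, only finitely many such bends are needed to reach $v\equiv\gamma$. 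Stage 3 smooths the resulting $v$ into the pure constant $\gamma$ via a short terminal interpolation, for which the openness of $C$ and the inner cone condition again leave enough slack.

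I expect the chief obstacle to be Stage 2. Unlike the Riemannian deformation of Theorem \ref{thm:12}, where the angle $\theta$ enters the curvature formula essentially multiplicatively and linearly in $\theta'$, a conformal bend couples $|f'|^2$, $f''$, and their $h_0$-traces into a single Schouten tensor, so the elementary pointwise inequality \eqref{eq_1} must be replaced by a simultaneous bound on $|f'|$ and $|f''|$ measured against $v^{-2}$. Achieving this forces the bending to be slow on the scale of $v$, but the exponential decay of $u$ near $p$ provides ample room to insert an arbitrarily long transition, provided $\delta$, and hence $\gamma$, is chosen sufficiently small.
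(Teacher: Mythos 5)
Your overall strategy --- cylinder coordinates, conformal deformation, a three-stage bend with $f=\log v$ playing the role of $\theta$, and exploiting the exponential stretch of $t=-\log r$ for elbow room --- matches the spirit of the paper's proof. But there are two genuine gaps, one structural and one technical, and both are exactly where the paper's Proposition~\ref{prop:10} and the factorization $\sigma = u\,v_\lambda$ do the real work.

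First, the decomposition $\iota^*R_{g_D} = v^{-2}\bigl(R_{S^{n-1}\times\reals}+E\bigr)$ is not in a usable form except on the constant-$v$ tail. The inner cone condition gives that $R+C_\rho\subset C$ for $R\in C$, with $C_\rho$ a cone containing a ball about $R_{S^{n-1}\times\reals}$; it does \emph{not} say that a uniformly small perturbation of a large multiple of $R_{S^{n-1}\times\reals}$ is automatically in $C$, and at $t=t_0$, where $v=u$ and $g_D=g$, your $E$ is simply $v^2\iota^*R_g - R_{S^{n-1}\times\reals}$, which is not small. You flag the need for a ``finer criterion in the transition region,'' but that finer criterion \emph{is} the substance of the proof. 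The paper resolves this by decomposing
\[
\tilde{R}_D = u^{-2}\tilde{R}^\lambda_M \;+\; (uq)^{-2}\Bigl[\tfrac{\alpha(2-\alpha)}{r^2}R_{S^{n-1}\times\reals} + \tfrac{2\alpha'}{r}\,g_{\reals^n}\wedge\left(e_n^{\flat}\otimes e_n^{\flat}\right) + \tfrac{2\alpha}{r}E^{\lambda}\Bigr],
\]
so that the first summand is $u^{-2}$ times something in $C$ (and hence in $C$ by star-shapedness, since $u\geq1$), the second is governed by the ball $\ball{\lambda\rho}{\tilde{R}_M+\lambda R_{S^{n-1}\times\reals}}\subset C$, and the error terms are controlled by the differential inequality $\alpha'+c\,\alpha(2-\alpha)/r\geq 0$. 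The cylinder term must appear \emph{in addition to} the original curvature, not in place of it, precisely because the inner cone condition is anchored at points of $C$, not at $R_{S^{n-1}\times\reals}$ itself.

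Second, combining the radial bend and the angular blend into a single function $v$ obscures a crucial separation. In the paper $\sigma=u\,v_\lambda$, with $u$ a purely radial factor solving $u'/u=-\alpha/r$ and $v_\lambda$ a cutoff that blends the conformally flat factor $v$ into $1$ inside the radius $\lambda$, chosen \emph{after} the radial bending is complete. This ordering is what allows Lemma~\ref{lem:10} to produce the uniform estimate $\norm{\tilde{R}^\lambda_M}\leq C_2/\lambda$: the angular Hessian contributions enter only through $v_\lambda$, at a scale $\lambda$ that is finally chosen small relative to the available slack $\rho/r^2$. In your version, as $v$ is driven to the constant $\gamma$ the angular dependence of $u=r\phi$ must be shed somewhere in Stages 2--3, but $v^{-2}$ is then of size $\gamma^{-2}$ and you have no estimate showing the angular Hessian terms of $f=\log v$ stay below the needed threshold. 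Without a separate parameter playing the role of $\lambda$, you cannot make the blending cost small independently of the bending profile. These are the ideas the paper's Proposition~\ref{prop:10}, Lemma~\ref{lem:7}, and Lemma~\ref{lem:10} supply, and they are essential.
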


The \emph{proof} of this theorem is based on an extension of a method developed in \cite{MicallefWang}. Before going into the details of the construction, it is useful to collect some formulas which show the influence of a conformal deformation on the curvature of a Riemannian manifold.

\begin{lemma}
	\label{lem:4}
	Let $\sigma$ denote a positive, smooth function on a Riemannian manifold $(M^n, g)$. 
	\begin{enumerate}
		\item The gradient of a function $f \in C^{\infty}(M)$ with respect to $\sigma^2 g$ is given by
			\[
				\nabla^{\sigma^2g}f = \sigma^{-2}\nabla^gf.
			\]
		\item The Levi-Civita connections are related by
			\[
				\nabla^{\sigma^2g}_XY = \nabla^{g}_XY + \sigma^{-1} \left( d\sigma(X)Y + d\sigma(Y)X - g(X,Y) \nabla^g\sigma \right)
			\] 
		\item \label{lem:4-3} The $(2,0)$-hessian of a function $f \in C^{\infty}(M)$ is given by 
			\[
				\Hess^{\sigma^2g}(f) = \Hess^{g}(f) + \sigma^{-1}\left(d\sigma (\nabla^gf)\, g - d\sigma \odot df \right),
			\]
			where $d\sigma \odot df := d\sigma \otimes df + df \otimes d\sigma$.
		\item \label{lem:4-4} In case $\sigma = w \circ f$, where $w \in C^{\infty}(\reals)$ and $f \in C^{\infty}(M)$, the $(4,0)$-curvature tensor 
					is given by
			\begin{align*}
				R_{(M, \sigma^2 g)} &= \sigma^2 \Biggl[ R_{(M,g)} - \left(\frac{w'}{w}\right) \circ f \, g \wedge \left( 2 \Hess^{g}(f) +  
					\left(\frac{w'}{w}\right) \circ f \sqnorm{df}_{g}	g \right) \\
				&\ghost{= \sigma^2 \Biggl[ } - 2\left( \frac{w''}{w} - 2  \left(\frac{w'}{w}\right)^2 \right) \circ f \,(g \wedge df^2 )\Biggr].
			\end{align*}
	\end{enumerate}
\end{lemma}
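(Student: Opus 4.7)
The plan is to derive (1)--(4) by specialising the standard conformal-change identities for gradient, Levi-Civita connection, Hessian, and Riemann tensor. Throughout, it is convenient to set $\tilde{g} := \sigma^2 g$ and $u := \log \sigma$, so that $\tilde{g} = e^{2u} g$, $du = \sigma^{-1} d\sigma$ and $\nabla^g u = \sigma^{-1} \nabla^g \sigma$; the four formulas are then unified by this rewriting.

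For (1), I would use the defining identity $\tilde{g}(\nabla^{\tilde{g}} f, X) = df(X) = g(\nabla^g f, X)$ for every $X$; combined with $\tilde{g} = \sigma^2 g$ this forces $\nabla^{\tilde{g}} f = \sigma^{-2} \nabla^g f$. For (2), I would plug $\tilde{g} = e^{2u} g$ into the Koszul formula and subtract the analogous formula for $\nabla^g$; the exponential factors produce precisely the classical correction $du(X) Y + du(Y) X - g(X,Y) \nabla^g u$, which, once $du$ and $\nabla^g u$ are re-expressed in terms of $\sigma$, gives the stated formula. Formula (3) then falls out of (2) via the defining identity $\Hess^{\tilde{g}}(f)(X,Y) = X(Y(f)) - (\nabla^{\tilde{g}}_X Y)(f)$: the first term agrees with the $\nabla^g$ case, and the connection correction from (2), paired against $df$, regroups into $\sigma^{-1}\bigl(d\sigma(\nabla^g f)\, g - d\sigma \odot df\bigr)$.

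For (4), my plan is to invoke the classical conformal-change formula for the $(0,4)$ Riemann tensor,
\[
R_{\tilde{g}} = \sigma^2\bigl(R_g - 2\, g \wedge B\bigr), \qquad B := \Hess^g u - du \otimes du + \tfrac{1}{2}\norm{\nabla^g u}^2\, g,
\]
where $g \wedge h$ is the (half) Kulkarni--Nomizu product already appearing in the Gau\ss~equation $R_D = R_{M\times\R} + \sff_D \wedge \sff_D$ used in Section~\ref{sec:mainproof}; this identity itself is a short consequence of (2) via the second structure equation. To specialise to $\sigma = w \circ f$, I would apply the chain rule to obtain
\[
du = \tfrac{w'}{w}(f)\, df, \qquad \norm{\nabla^g u}^2 = \bigl(\tfrac{w'}{w}\bigr)^2(f)\,\norm{df}^2,
\]
together with the analogous (direct) Hessian identity
\[
\Hess^g u = \tfrac{w'}{w}(f)\,\Hess^g f + \Bigl(\tfrac{w''}{w} - \bigl(\tfrac{w'}{w}\bigr)^2\Bigr)(f)\, df \otimes df.
\]
Substituting into $B$, collecting the $\Hess^g f$, $df^2$ and $\norm{df}^2 g$ pieces separately, and pairing with $g$ via $\wedge$ delivers the stated decomposition.

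The main obstacle is purely arithmetic bookkeeping. One has to keep the paper's normalisation of $g \wedge$ straight, reconcile the $-du \otimes du$ term in $B$ with the $(w'/w)^2\, df \otimes df$ produced by $\Hess^g u$ so that the combined coefficient of $df^2$ lands exactly at $(w''/w) - 2(w'/w)^2$, and verify that the $\tfrac{1}{2}\norm{\nabla^g u}^2 g$ contribution regroups cleanly with the $(w'/w)\,\Hess^g f$ piece into the factored form $2\,\Hess^g f + (w'/w)(f)\,\norm{df}^2 g$ shown on the right-hand side. Once these coefficient checks are carried out, every remaining step is a direct chain-rule or Koszul manipulation.
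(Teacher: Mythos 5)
Your proposal is correct, and the coefficient bookkeeping works out exactly: with $u=\log\sigma$ and $\tilde g=e^{2u}g$ the standard conformal identity $R_{\tilde g}=\sigma^2\bigl(R_g-2\,g\wedge B\bigr)$, $B=\Hess^g u-du\otimes du+\tfrac12\norm{\nabla^g u}^2 g$, specialised via $du=\tfrac{w'}{w}(f)\,df$ and $\Hess^g u=\tfrac{w'}{w}(f)\Hess^g f+\bigl(\tfrac{w''}{w}-(\tfrac{w'}{w})^2\bigr)(f)\,df\otimes df$, regroups to the displayed formula. The paper itself offers no proof of this lemma — it is a standard collection of conformal-change formulas, with item (4) essentially Besse's Theorem 1.159b (which the paper cites explicitly a little later) rewritten in the $\sigma=w\circ f$ form — so there is nothing to compare against; your Koszul/chain-rule derivation is the expected one.
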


Now we turn to the construction of the function $\sigma$ as described in Theorem \ref{thm:8}. 
First of all, due to the conformal flatness we are able to choose a function $v \in C^{\infty}(V)$ in a neighborhood $V \subset \ball{\epsilon}{p}$ of $p$ such that $v^2 g_M$ is flat. By possibly rescaling $v$ and shrinking $V$, we can suppose that $v(p) = 1$, $2^{-1} \leq v \leq 2$, and that $V$ is a diffeomorphic image of some open ball $\ball{\epsilon'}{0} = \set{\nu\in T_pM}{g(\nu, \nu) < \epsilon'}$ under the mapping $\exp := \exp^{v^2g}_p$. Let $r(x) := d^{v^2g}(p,x)$ denote the distance function to $p$ with respect to this flat metric. Let $\phi : \reals \to [0,1]$ be a smooth cut-off function satisfying $\phi|_{[0, \frac{1}{2}]} \equiv 1$ and $\phi|_{[1, \infty)} \equiv 0$. Given $\lambda > 0$, which will be determined in due course, we define
\[
	\phi_{\lambda}(x) := \phi\left(\frac{r(x)}{\lambda}\right)
\]
on $V$. Using this rescaled cut-off function we set 
\[
	v_{\lambda} := \sqrt{\phi_{\lambda} v^2 + \left(1- \phi_{\lambda}\right)} = \sqrt{ 1 + \phi_{\lambda} (v^2 -1 )}
\]
Then the estimate $2^{-1} \leq v_{\lambda} \leq 2$ continues to hold. We set $q := \frac{v_{\lambda}}{v}$, and get $4^{-1} \leq q \leq 4$.\\
Furthermore, as in \cite{MicallefWang}, we consider the differential equation
\begin{equation}
	\label{eq:23}
	\frac{u'(r)}{u(r)} = - \frac{\alpha(r)}{r},
\end{equation}
with $\alpha : [0, \infty) \to [0,1]$ yet to be determined such that a solution $u : [0, \infty) \to [1, \infty)$ can be constructed matching the following conditions:
\begin{enumerate}
	\item $u \equiv 1$ on $[r_0, \infty)$ for some $r_0 >0$.
	\item $u(r) = \frac{\gamma}{r}$ for all $r \leq \lambda$, where $\gamma$ is given as in the statement of Theorem \ref{thm:8}.
	\item $(D, g_D) := (M \backslash \{p\}, \sigma^2 g)$, satisfies $C$. Here, $\sigma \in C^{\infty}(M \backslash 
					\{p\})$ is given by $\sigma = u v_{\lambda}$ where both $u$ and $v_{\lambda}$ are defined and by $1$ otherwise, 
					which yields a smooth function by construction.
\end{enumerate}
\par
The purposes served by the two complementary deformations is to first address (via $u$) the problem of deforming the distance spheres of varying radii $r$ with respect to the flat metric $v^2g$ around $p \in M$ towards a uniform radius $\gamma$ (still with respect to the metric $v^2g$), which will make the new end of the manifold roughly look like a cylinder, and, secondly, to blend (using $v_{\lambda}$) the original metric smoothly into the aforementioned flat metric in order to make the end look exactly like a cylinder.
\par
Once $u$ and $v_{\lambda}$ are determined, Theorem \ref{thm:8} will be proven. Indeed, \ref{enum:thm:8-1}  and \ref{enum:thm:8-2} of Theorem \ref{thm:8} hold true obviously; for the validity of \ref{enum:thm:8-3} observe that the metric $g_D$ on $\{ r < \frac{\lambda}{2} \}$ has the form
\begin{align*}
	\sigma^2 g = \frac{\gamma^2}{r^2} v^2 g = \frac{\gamma^2}{r^2} \left( dr^2 + r^2 g_{S^{n-1}} \right) = ds^2 + g_{S^{n-1}(\gamma)},
\end{align*}
where $s := \gamma \log(r) \in \left(-\infty, \gamma \log\left(\frac{\lambda}{2}\right)\right)$. This also implies the completeness of $g_D$.
\par
Again, before the various parameters in the bending process can be specified, a close inspection and decomposition of the involved curvature tensor is necessary. To this end, we choose for any $\nu \in T^1_pM = \set{ \nu \in T_pM}{ g(\nu, \nu) = 1}$ and $r > 0$ an orthonormal basis $B(\nu, r) = (b_1, \ldots, b_n)$, $b_i \in T_{\exp(r\nu)}M$, with $b_n = d\exp(r\nu)(\nu) = \nabla^{v^2g}r$. Using the notation $w \cdot B(\nu, r) := \left(w(\exp(r\nu)) b_1, \ldots, w(\exp(r\nu)) b_n\right)$ for a function $w$, we denote using the notation of Definition \ref{def:007}
\begin{gather*}
	\text{the }(uq)^{-1} \cdot B(\nu, r)\text{-pullback of }R_{(D, u^2v_{\lambda}^2g)}\text{ by }\tilde{R}_D(\nu, r),\\
	\text{the }v \cdot B(\nu, r)\text{-pullback of }R_{(M,g)}\text{ by }\tilde{R}_M (\nu, r)\text{ and }\\
	\text{the }q^{-1} \cdot B(\nu, r)\text{-pullback of }R_{(M, v_{\lambda}^2g)}\text{ by }\tilde{R}^{\lambda}_M (\nu, r).
\end{gather*}
\par Now we can state
\begin{proposition}
	Given the identifications made above, the identity
	\label{prop:10}
	\begin{equation}
		\label{eq:28}
		\begin{split}
			\tilde{R}_D (\nu, r) = \, &u^{-2} \tilde{R}^{\lambda}_M (\nu, r) + (uq)^{-2} 
					\biggl[\frac{\alpha(r)\left(2-\alpha(r)\right)}{r^2} R_{S^{n-1} \times \reals} + \\ 
					&\frac{2\alpha'(r)}{r} g_{\reals^n} \wedge 
						\left(e_n^{\flat}\otimes e_n^{\flat}\right) + \frac{2\alpha(r)}{r}E^{\lambda}(\nu, r)\biggr]
		\end{split}
	\end{equation}
	holds for $(\nu, r) \in T_p^1M \times (0, \epsilon')$, where $E^{\lambda} : T_p^1M \times (0, \epsilon') \to \algcurvop{\reals^n}$ has a bounded 
	image, i.e.
	\[
		\norm{E^{\lambda}(\nu, r)} \leq C_1
	\]
	for some constant $C_1$ that does not depend on $\lambda$.
\end{proposition}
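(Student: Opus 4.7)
The plan is to apply Lemma \ref{lem:4}(\ref{lem:4-4}) to the decomposition $\sigma^2 g = u^2\cdot v_\lambda^2 g$, viewing this as a radial conformal rescaling of the intermediate metric $\tilde g := v_\lambda^2 g$ by the function $w\circ f = u\circ r$. This directly yields
\[
R_{(M,\sigma^2 g)} = u^2\Bigl[R_{(M,\tilde g)} - \tfrac{u'}{u}\,\tilde g\wedge\bigl(2\Hess^{\tilde g}(r) + \tfrac{u'}{u}|dr|^2_{\tilde g}\,\tilde g\bigr) - 2\bigl(\tfrac{u''}{u} - 2(\tfrac{u'}{u})^2\bigr)\,\tilde g\wedge dr^2\Bigr].
\]
Differentiating the defining ODE $u'/u = -\alpha/r$ gives $u''/u - 2(u'/u)^2 = \alpha(1-\alpha)/r^2 - \alpha'/r$, rewriting every coefficient purely in terms of $\alpha$ and $\alpha'$. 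Lemma \ref{lem:4}(1) yields $|dr|^2_{\tilde g} = q^{-2}$, and applying Lemma \ref{lem:4}(\ref{lem:4-3}) separately to $\sigma = v$ and $\sigma = v_\lambda$ produces a decomposition $\Hess^{\tilde g}(r) = \Hess^{v^2 g}(r) + \Delta_H$, with $\Delta_H$ supported in the transition annulus $\{\lambda/2 < r < \lambda\}$. Moreover $\Hess^{v^2 g}(r) = \tfrac{1}{r}(v^2 g - dr^2)$ since $v^2 g$ is flat on $V$ and $r$ is its Euclidean distance.

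\textbf{Pullback and identification of the leading terms.} I would next pull everything back via $(uq)^{-1} B(\nu, r)$. Because $B$ is $v^2 g$-orthonormal with $b_n = \nabla^{v^2 g} r$, the component identities $v^2 g(b_i, b_j) = \delta_{ij}$, $v_\lambda^2 g(b_i, b_j) = q^2 \delta_{ij}$, and $dr(b_i) = \delta_{in}$ convert each tensor into an expression in $g_{\reals^n}$, $e_n^\flat\otimes e_n^\flat$, and $\Delta_H$. The first summand $u^2 R_{(M,\tilde g)}$ becomes exactly $u^{-2}\tilde R_M^\lambda$ after pullback. For the remaining summands, the key algebraic identity $(e_n^\flat\otimes e_n^\flat)\wedge(e_n^\flat\otimes e_n^\flat) = 0$ (immediate from the Kulkarni--Nomizu formula) gives $g_{\reals^n}\wedge g_{\reals^n} - 2\,g_{\reals^n}\wedge(e_n^\flat\otimes e_n^\flat) = g_{\reals^{n-1}}\wedge g_{\reals^{n-1}}$, and the right hand side agrees with $R_{S^{n-1}\times\reals}$ up to the normalization of $\pi_{\bigwedge^2\reals^{n-1}}$. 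Collecting coefficients, the $\Delta_H$-free terms combine into exactly $(uq)^{-2}\bigl[\tfrac{\alpha(2-\alpha)}{r^2}R_{S^{n-1}\times\reals} + \tfrac{2\alpha'}{r}\,g_{\reals^n}\wedge(e_n^\flat\otimes e_n^\flat)\bigr]$, matching the first two terms of \eqref{eq:28}.

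\textbf{Uniform bound on $E^\lambda$, the main obstacle.} All remaining contributions are packaged as $(uq)^{-2}(2\alpha/r)E^\lambda$ with $E^\lambda := g_{\reals^n}\wedge(\Delta_H)_B$, where $(\Delta_H)_B$ denotes the evaluation of $\Delta_H$ in the $B$-basis. The nontrivial point is the uniform bound $\|E^\lambda\| \leq C_1$; naively $|d\phi_\lambda|_g = O(1/\lambda)$ suggests blow-up. Expanding $\Delta_H$ through Lemma \ref{lem:4}(\ref{lem:4-3}) reduces the problem to bounding $|dv_\lambda|_g$ uniformly in $\lambda$. Differentiating $v_\lambda^2 = 1 + \phi_\lambda(v^2-1)$ gives $2 v_\lambda\, dv_\lambda = d\phi_\lambda\,(v^2-1) + 2\phi_\lambda v\, dv$, and the crucial point is the normalization $v(p) = 1$: it forces $|v^2 - 1| \leq Cr$, so on the transition annulus $r\sim\lambda$ the a priori singular factor satisfies $|d\phi_\lambda(v^2-1)|_g = O((1/\lambda)\cdot\lambda) = O(1)$. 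Hence $|dv_\lambda|_g = O(1)$ uniformly in $\lambda$, which propagates to $\|\Delta_H\| = O(1)$ and finally to $\|E^\lambda\| \leq C_1$, completing the proof.
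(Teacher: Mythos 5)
Your proposal is correct and follows essentially the same route as the paper: apply Lemma \ref{lem:4}\ref{lem:4-4} to $\sigma = u\cdot v_\lambda$ viewed as $u$ rescaling the intermediate metric $v_\lambda^2 g$, compute $\Hess^{v_\lambda^2 g}(r)$ and $\norm{dr}^2_{v_\lambda^2 g}$ via Lemma \ref{lem:4}\ref{lem:4-3} and Lemma \ref{lem:4}(1), pull back, and control $E^\lambda$ by bounding the conformal factor's differential using the normalization $v(p)=1$ to cancel the $\lambda^{-1}$ from $d\phi_\lambda$. One small slip: $\Delta_H = \Hess^{v_\lambda^2 g}(r) - \Hess^{v^2 g}(r)$ is \emph{not} supported in the transition annulus $\{\lambda/2 < r < \lambda\}$ (it vanishes for $r\leq\lambda/2$ where $q\equiv 1$, but for $r\geq\lambda$ one has $q = 1/v$, so $dq = -v^{-2}dv \neq 0$); however this claim is never used — your bound on $\norm{dv_\lambda}$ controls $\Delta_H$ everywhere — so the proof stands.
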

Notice that $\tilde{R}^{\lambda}_M = \tilde{R}_M$ for $r \geq \lambda$.
\begin{proof}
	Using Lemma \ref{lem:4}\ref{lem:4-4} we first obtain
	\begin{align*}
		R_{(D, u^2v_{\lambda}^2g)}  &= u^2 \Biggl[R_{(M, v_{\lambda}^2 g)} - 
						 \frac{u'}{u} v_{\lambda}^2 g \wedge \left(2 \Hess^{v_{\lambda}^2 g}(r) + \frac{u'}{u} 
								\sqnorm{dr}_{v^2_{\lambda}g} v^2_{\lambda}g \right) \\
				&\ghost{= u^2 \Biggl[}\,- 2 \left( \frac{u''}{u} - 2 \left(\frac{u'}{u}\right)^2 \right) v^2_{\lambda}g \wedge dr^2 \Biggr].
	\end{align*}
	Since $\sqnorm{dr}_{v^2_{\lambda}g} = q^{-2}$ and
	\[
		\Hess^{v^2_{\lambda}g}(r) = \frac{1}{r} v^2 g_r + q^{-1} \left( 
				dq(\partial_r)	\,v^2 g - dq \odot dr \right),
	\]
	where we used the orthogonal decomposition $v^2 g = dr^2 + v^2 g_r$ and Lemma \ref{lem:4}\ref{lem:4-3} ($\partial_r := \nabla^{v^2g}r$ is used here and henceforth 
	as an abbreviation), we get
	\begin{equation}
		\label{eq:29}
		\begin{split}
			\frac{u'}{u} v_{\lambda}^2 g &\wedge \left(2 \Hess^{v_{\lambda}^2 g}(r) + \frac{u'}{u} 
				\sqnorm{dr}_{v^2_{\lambda}g} v^2_{\lambda}g \right) \\
			&= v_{\lambda}^2 g \wedge \left( - \frac{2 \alpha}{r^2} v^2 
				g_r + \frac{\alpha^2}{r^2} v^2 g\right) - \frac{2 \alpha}{r} q^{-1}v_{\lambda}^2 g \wedge \left(  
				\partial_r  q \, v^2 g - dq	\odot dr \right) \\
			&= - \frac{\alpha (2 - \alpha)}{r^2} v^2 v_{\lambda}^2 g_r \wedge g_r + \frac{2 \alpha (\alpha - 1)}{r^2} 
				v_{\lambda}^2 g_r \wedge dr^2 \\
			&\ghost{=} \, - \frac{2 \alpha}{r} q^{-1} v_{\lambda}^2 g \wedge \left( \partial_r q  \, v^2 g -  dq \odot dr \right)  
		\end{split}
	\end{equation}
	and
	\begin{equation}
		\label{eq:30}
		 2 \left( \frac{u''}{u} - 2 \left(\frac{u'}{u}\right)^2 \right) v^2_{\lambda}g \wedge dr^2  = - \left( \frac{2 \alpha'}{r}
		 	+ \frac{2\alpha (\alpha - 1)}{r^2} \right) v_{\lambda}^2 g_r \wedge dr^2
	\end{equation}	
	Combining \eqref{eq:29} and \eqref{eq:30}, we get
	\begin{align*}
		R_{(D, u^2 v_{\lambda}^2g)} &= u^2 \biggl[ R_{(M, v_{\lambda}^2g)} + \frac{\alpha(2-\alpha)}{r^2} v^2 v_{\lambda}^2 g_r \wedge g_r + \frac{2\alpha'}{r}
			v_{\lambda}^2 g_r \wedge dr^2 \\
			&\ghost{= u^2 \biggl[} + \frac{2\alpha}{r} q^{-1} v_{\lambda}^2 g \wedge ( \partial_r  q \, v^2 g - dq \odot dr ) \biggr].
	\end{align*} 
	Switching to the algebraic pullback requires multiplying with $(uq)^{-4}$, replacing $v^4 g_r \wedge g_r$ by $R_{S^{n-1} \times \reals}$, $dr$ by 
	$e_n^{\flat}$ and $v^2g$ by $g_{\reals^n}$ and hence directly leads to \eqref{eq:28}, where the error term is given explicitly by
	\[
		E^{\lambda}(\nu, r) := \left[ q^{-1} g_{\reals^n} \wedge \left( \partial_r q \, g_{\reals^n} - dq \odot e_n^{\flat} \right) \right] \circ \exp_{p}^{v^2g}(r 
				\nu).
	\]			
	\par
	Since $\norm{A \wedge B} \leq c \norm{A}\norm{B}$ for selfadjoint operators $A$, $B$, and $\frac{1}{4} \leq q \leq 4$, in order to get the desired bound on 
	$\norm{E^{\lambda}}$, we are left with finding a suitable bound of $\norm{dq}_{v^2g}$ which does not depend on $\lambda$. Recall that
	$q = \frac{v_{\lambda}}{v} = \sqrt{v^{-2} + \phi_{\lambda} (1 - v^{-2})}$. We compute
	\[
		dq(X) = \frac{1}{2q} \left( (1 - \phi_{\lambda}) X(v^{-2}) + X(\phi_{\lambda})(1-v^{-2}) \right).
	\]
	Since $\left|\partial_r \phi_{\lambda}\right| \leq \frac{c}{\lambda}$ for some $c$ and, similarly, $\left|1 - v^{-2}\right| \leq Cr$, we readily obtain
	\[
			\left| \partial_r (\phi_{\lambda})(1-v^{-2})\right| \leq \begin{cases}
						cC, & \text{ if } r < \lambda, \\
						0, & \text{ if } r \geq \lambda,
				\end{cases}
	\]
	for $\phi_{\lambda} \equiv 0$, if $r \geq \lambda$. Because of $X(\phi_{\lambda}) = 0$, if $X \bot \partial_r$, this yields
	\[
		\norm{dq}_{v^2g} \leq \frac{1}{8}\left(\norm{dv^{-2}}_{v^2 g} + cC \right)
	\]
	on $\set{x}{d^{v^2g}(x,p) < \epsilon'}$. This upper bound is evidently independent of $\lambda$.
\end{proof}


	



\subsection{Construction of the deforming functions}

Before the deforming functions can be described, some parameters - which solely depend on the geometry of $(M, g)$ around $p$ - have to be fixed. The various choices being made will become clear during the course of the proof.

The inner cone condition of $C$ with respect to $R_{S^{n-1} \times \reals}$ implies the existence of a $\rho > 0$ so that
\begin{equation}
	\label{eq:39}
	\ball{\lambda \rho}{\tilde{R}_M(\nu, r) + \lambda R_{S^{n-1} \times \reals}} \subset C
\end{equation}
for all $(\nu, r) \in T_p^1M \times (0, \epsilon')$, $\lambda > 0$. \\
We require the starting radius of the bending process $r_0$ to obey $r_0 < \min\left\{\frac{\rho}{4C_1}, \epsilon' \right\}$, where $C_1$ is the constant given by Proposition \ref{prop:10}. 


\subsection{Step 1: Initial bending}

The aim of the initial bending is to prescribe $\alpha$ on a (tiny) interval $[r_1, r_0]$, which will affect the geometry of the manifold in an annular region $\{ r_1 \leq r(x) \leq r_0 \} \subset V$ in such a way as to maintain $\tilde{R}_D \in C$ while making $\alpha$ positive at $r_1$. That $(M, g)$ satisfies $C$ allows us to find $\epsilon > 0$ such that for any isometry $\iota : \reals^n \to T_xM$, $x \in \overline{V}$, we have
\[
	\ball{\epsilon}{\iota^*R_{(M,g)}(x)} \subset C.
\]
Using the star-shapedness of $C$ it therefore suffices to achieve
\[
	\tilde{R}_D \in \ball{\frac{\epsilon}{u^2}}{\frac{1}{u^2} \tilde{R}_M},
\]
which in turn holds provided the right side of
\begin{align*}
	\norm{\tilde{R}_D  - \frac{1}{u^2} \tilde{R}_M} &\leq (uq)^{-2} \left[ 
				\frac{\alpha(2-\alpha)}{r^2} + \frac{2 | \alpha' |}{r} + \frac{2\alpha}{r} \right] C \\
			&\leq \frac{32}{u^2} \left[ \frac{\alpha}{r_1^2} + \frac{| \alpha' |}{r_1} + \frac{\alpha}{r_1} \right] C,
\end{align*}
where $C := \max\left\{ \norm{R_{S^{n-1} \times \reals}}, \norm{g_{\reals^n} \wedge e_n^{\flat} \otimes e_n^{\flat}},\- C_1 \right\}$, is smaller than $\frac{\epsilon}{u^2}$. This is obviously true, if $\alpha \leq \tau$ for some small $\tau > 0$ and if a suitable bound for $\alpha'$ holds on $[r_1, r_0]$. Now $\alpha : [r_1, r_0] \to [0, \tau]$ can easily be prescribed such that $\alpha$ is constant near $r_1$ and $r_0$, being constant with values $\tau$ and $0$, respectively, in these neighborhoods.


\subsection{Step 2: Main bending}

The next step is to produce an extension of $\alpha$ to an interval $[r_2, r_1]$, $r_2 > \lambda$, such that $\alpha$ equals $1$ on a neighborhood around $r_2$ while maintaining $\tilde{R}_D(\nu, r) \in C$ for these $r \in [r_2, r_1]$, $\nu \in T^1_pM$. We derive the following sufficient condition for this:

\begin{lemma}
	\label{lem:7}
	There exists $c > 0$, which does not depend on $\lambda$, such that if $\alpha : (\lambda, r_1] \to [0,1]$, $\alpha'(r) \leq 0$, fulfills
	\begin{equation}
		\label{eq:20}
		\alpha'(r) + c \frac{\alpha(r)(2- \alpha(r))}{r} \geq 0,
	\end{equation}
	then $\tilde{R}_D(\nu, r) \in C$
\end{lemma}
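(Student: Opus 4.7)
My plan is to interpret the decomposition \eqref{eq:28} from Proposition \ref{prop:10} as a perturbation of $\tilde{R}_M+\mu R_{S^{n-1}\times\reals}$, to which the inner cone estimate \eqref{eq:39} directly applies, and then to impose the differential inequality on $\alpha$ that is strong enough to keep the perturbation inside the allowed ball.

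Since $r>\lambda$ throughout the interval in question, Proposition \ref{prop:10} gives $\tilde{R}^{\lambda}_M=\tilde{R}_M$, and factoring $u^{-2}$ out of \eqref{eq:28} yields
\[
\tilde{R}_D(\nu,r)=u^{-2}\Bigl[\tilde{R}_M(\nu,r)+\mu\,R_{S^{n-1}\times\reals}+X(\nu,r)\Bigr],
\]
with $\mu:=q^{-2}\frac{\alpha(2-\alpha)}{r^2}\ge 0$ and
\[
X(\nu,r):=q^{-2}\Bigl(\frac{2\alpha'(r)}{r}\,g_{\reals^n}\wedge(e_n^{\flat}\otimes e_n^{\flat})+\frac{2\alpha(r)}{r}E^{\lambda}(\nu,r)\Bigr).
\]
Because $u\ge 1$ and $C$ is star-shaped with respect to $0$, the scaling factor $u^{-2}\in(0,1]$ preserves membership in $C$, so it suffices to show that the bracketed operator lies in $C$.

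By \eqref{eq:39}, $\ball{\mu\rho}{\tilde{R}_M(\nu,r)+\mu R_{S^{n-1}\times\reals}}\subset C$ for every $\mu>0$, so it is enough to check $\norm{X(\nu,r)}<\mu\rho$. Using $\alpha'\le 0$, the uniform bound $\norm{E^{\lambda}}\le C_1$ from Proposition \ref{prop:10}, and abbreviating $\kappa:=\norm{g_{\reals^n}\wedge(e_n^{\flat}\otimes e_n^{\flat})}$, I would estimate
\[
\norm{X(\nu,r)}\le q^{-2}\Bigl(\frac{-2\alpha'(r)}{r}\,\kappa+\frac{2\alpha(r)}{r}C_1\Bigr).
\]
The common factor $q^{-2}$ cancels against the one sitting inside $\mu$, so the required inequality reduces to
\[
-2\alpha'(r)\kappa+2\alpha(r)C_1<\frac{\alpha(r)(2-\alpha(r))}{r}\rho.
\]

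To close, I would use the initial radius bound $r_0<\rho/(4C_1)$ together with $2-\alpha\ge 1$ (valid since $\alpha\in[0,1]$) to get $\frac{\alpha(2-\alpha)}{r}\rho>4\alpha C_1$, which absorbs the $2\alpha C_1$ term into half of the right-hand side. What remains is $-2\alpha'\kappa\le\frac{\alpha(2-\alpha)}{2r}\rho$, i.e.\ exactly \eqref{eq:20} with $c:=\rho/(4\kappa)$. Since $\rho$, $\kappa$ and $C_1$ depend only on the fixed geometry of $(M,g)$ near $p$, and since the $\lambda$-independence of $C_1$ is built into Proposition \ref{prop:10}, the resulting constant $c$ is independent of $\lambda$, as required. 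The only real care-point will be bookkeeping the $\lambda$-independence through the cancellation of $q^{-2}$ and the error bound; the inequality itself falls out with room to spare.
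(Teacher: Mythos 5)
Your proposal is correct and follows essentially the same route as the paper: use \eqref{eq:39} with $\mu = q^{-2}\alpha(2-\alpha)/r^2$, bound the error term via Proposition \ref{prop:10}, split the estimate into an $\alpha'$-part (giving \eqref{eq:20} with $c=\rho/(4\norm{g_{\reals^n}\wedge e_n^\flat\otimes e_n^\flat})$) and an $E^\lambda$-part (absorbed by $r_0<\rho/(4C_1)$), and invoke star-shapedness to handle the overall $u^{-2}$ scaling. The only cosmetic difference is that you factor out $u^{-2}$ before applying star-shapedness, whereas the paper keeps the $u^{-2}$ inside the ball-inclusion \eqref{eq:25}; the two formulations are equivalent and yield the same constant.
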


\begin{proof}
	Due to \eqref{eq:39} and the star-shapedness of $C$, the relation $\tilde{R}_D(\nu, r) \in C$ holds, if
	\begin{equation}
		\label{eq:25}
		\tilde{R}_D(\nu, r) \in \ball{\rho \frac{\alpha(2 - \alpha)}{r^2} \left(u q\right)^{-2}}
			{\frac{1}{u^2} S},
	\end{equation}
	where $S := \tilde{R}_M(\nu, r) + \frac{\alpha(2-\alpha)}{r^2} q^{-2} R_{S^{n-1} \times \reals}$.
	By Proposition \ref{prop:10} we have
	\[
		\norm{\tilde{R}_D(\nu, r) - \frac{1}{u^2}S} \leq \left(u q\right)^{-2} \left(\frac{2|\alpha'|}{r} \norm{g_{\reals^n} 
					\wedge e_n^{\flat} \otimes e_n^{\flat}} + \frac{2\alpha}{r} \norm{E^{\lambda}}\right).
	\]
	Hence condition \eqref{eq:25} will hold, if the latter expression is smaller than 
	\[
		\rho \frac{\alpha (2 - \alpha)}{r^2} \left(u q\right)^{-2},
	\]
	which in turn is satisfied, if
	\begin{equation}
		\label{eq:26}
		\norm{E^{\lambda}} < \frac{\rho}{4r} 
	\end{equation}
	and
	\begin{equation}
		\label{eq:27}
		|\alpha'| \leq \frac{\rho}{4} \norm{g_{\reals^n} \wedge e_n^{\flat} \otimes e_n^{\flat}}^{-1} \frac{\alpha(2-\alpha)}{r}
	\end{equation}
	are true. \eqref{eq:26} holds because of $r < r_0 < \frac{\rho}{4 C_1}$. If we choose $c < \frac{\rho}{4} \norm{g_{\reals^n} \wedge e_n^{\flat} \otimes 
	e_n^{\flat}}^{-1}$, \eqref{eq:20} implies \eqref{eq:27}, which proves the lemma.
\end{proof}

So we have to solve a differential equation:

\begin{lemma}
	Given $\tau \in (0,1]$, there exists $r_2 \in (0, r_1)$ and a monotone, non-increasing  $\alpha : (0, r_1] \to [\tau 
	,1]$ fulfilling \eqref{eq:20} such that $\alpha$ is constant on a neighborhood of $r_1$, with value $\tau$, and $\rline{\alpha}{(0, r_2]} \equiv 1$. 
	Moreover, if $\alpha$ is extended to an interval $(0, r_0]$ as described above, there exists $\delta > 0$ such that for any $\gamma \in (0, \delta)$ the function 
	$\alpha$ (and $r_2$) can be chosen in such a way as to ensure that the solution $u$ of \eqref{eq:23} is given by
	\[
		u(r) = \frac{\gamma}{r}
	\]
	for $r < r_2$.
\end{lemma}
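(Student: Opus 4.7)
The critical case of \eqref{eq:20} is the separable ODE $\alpha'(r) = -c\alpha(2-\alpha)/r$, whose closed-form solutions, obtained via the partial fraction $\tfrac{1}{\alpha(2-\alpha)} = \tfrac12\bigl(\tfrac1\alpha + \tfrac{1}{2-\alpha}\bigr)$, are
\[
    \alpha_{r_*}(r) \;:=\; \frac{2}{1+(r/r_*)^{2c}}, \qquad r_* > 0.
\]
Each $\alpha_{r_*}$ is smooth and strictly decreasing on $(0,\infty)$, equals $1$ at $r_*$, and attains any value $\tau \in (0,1)$ at the radius $r_*\bigl((2-\tau)/\tau\bigr)^{1/(2c)}$.

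For the first part of the lemma I would fix $r_* := r_1\bigl(\tau/(2-\tau)\bigr)^{1/(2c)}$ (so that $\alpha_{r_*}(r_1) = \tau$), set $r_2 := r_*$, and assemble $\alpha$ piecewise: $\alpha \equiv 1$ on $(0, r_2]$, $\alpha := \alpha_{r_*}$ on $[r_2, r_1]$, and $\alpha \equiv \tau$ on a right-neighborhood of $r_1$. By construction \eqref{eq:20} holds in the interior of each piece, with equality on the ODE segment and strict slack $c/r$ (resp.\ $c\tau(2-\tau)/r$) on the left (resp.\ right) constant piece. A standard mollification at $r_2$ and $r_1$ produces a smooth, non-increasing $\alpha : (0, r_1] \to [\tau, 1]$ still satisfying \eqref{eq:20}; the positive slack on either side of each junction absorbs the negative contribution to $\alpha'$ introduced by the smoothing.

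For the second claim, the requirement $u(r) = \gamma/r$ on $(0, r_2]$, together with continuity at $r_2$ and the normalization $u(r_0) = 1$, forces
\[
    \gamma \;=\; r_2\, u(r_2) \;=\; r_2 \exp\!\left(\int_{r_2}^{r_0}\!\frac{\alpha(s)}{s}\,ds\right).
\]
The substitution $t = (s/r_*)^{2c}$ on $[r_2, r_1]$ yields the exact evaluation $\int_{r_2}^{r_1}\alpha_{r_*}(s)/s\,ds = c^{-1}\log(2-\tau)$, while $\int_{r_1}^{r_0}\alpha(s)/s\,ds \leq \tau\log(r_0/r_1)$ by the Step~1 bound $\alpha \leq \tau$. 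Substituting $r_2 = r_1\bigl(\tau/(2-\tau)\bigr)^{1/(2c)}$ and simplifying gives
\[
    \gamma(\tau) \;=\; r_1\,\tau^{1/(2c)}(2-\tau)^{1/(2c)}\,e^{\BigO(\tau\log(r_0/r_1))},
\]
a continuous strictly positive function of $\tau \in (0, 1)$ that vanishes as $\tau \to 0^+$. Taking $\delta$ to be any value below $\sup_{\tau \in (0,1)}\gamma(\tau)$, the intermediate value theorem supplies, for every prescribed $\gamma \in (0, \delta)$, an admissible parameter $\tau$.

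The main obstacle I anticipate is the compatibility of the smoothing at $r = r_2$ with the rigid prescription $\alpha \equiv 1$ on all of $(0, r_2]$ (required so that $u(r) = \gamma/r$ exactly on this half-interval); the resolution is to perform the entire smoothing to the right of $r_2$ and then relabel the right endpoint of the smoothing interval as the new $r_2$. The induced perturbation of $\gamma$ is continuous and can be made arbitrarily small, so the intermediate value argument goes through unchanged.
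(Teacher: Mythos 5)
Your construction of the explicit ODE solution $\alpha_{r_*}(r) = 2/(1+(r/r_*)^{2c})$ is correct, and the evaluation $\int_{r_2}^{r_1}\alpha_{r_*}(s)/s\,ds = c^{-1}\log(2-\tau)$ checks out. The first part of the lemma (existence of a single admissible $\alpha$ for a given $\tau$) is essentially fine, modulo the usual care that mollification preserves the nonlinear inequality \eqref{eq:20} near the junctions.

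The second part has a genuine gap. The lemma fixes $\tau$ \emph{first} (``Given $\tau \in (0,1]$, \ldots'') and then asserts that, with $\tau$ held fixed, $\alpha$ and $r_2$ can be chosen so that $u(r) = \gamma/r$ for any prescribed $\gamma \in (0,\delta)$. This is the version needed in the proof of Theorem~\ref{thm:8}, since $\tau$ is determined by Step~1 before the lemma is invoked. Your argument instead fixes the extension to be the exact ODE solution and makes $r_2$ a determined function of $\tau$; the only remaining free parameter is $\tau$ itself, and you hit the target $\gamma$ by varying $\tau$. That proves a different statement, not the lemma. Once $\tau$ is pinned down, your $\gamma(\tau)$ is a single number and there is no continuum of achievable $\gamma$.

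The freedom the paper exploits is the inequality (not equality) in \eqref{eq:20}: $\alpha$ need only descend \emph{no faster} than the ODE dictates, so in particular it may sit at the value $\tau$ (or descend slowly) for an arbitrarily long dyadic stretch before climbing to $1$. The substitution $r(s) = r_1 e^{-s/c}$, $\beta(s) := \alpha(r(s))$ turns \eqref{eq:20} into $\beta'(s) \leq \beta(2-\beta)$; any non-decreasing $\beta$ on $[0, s_2]$ with $\beta'\leq\tau(2-\tau)$, $\beta(0)=\tau$, $\beta(s_2)=1$, and $s_2$ free, is admissible. Since
\[
    \gamma \;=\; r_1\,\exp\!\left(\int_{r_1}^{r_0}\frac{\alpha(t)}{t}\,dt\right)\exp\!\left(\tfrac1c\int_0^{s_2}\bigl(\beta(s)-1\bigr)\,ds\right),
\]
holding $\tau$ fixed and sending $s_2 \to \infty$ while $\beta$ lingers near $\tau<1$ drives $\gamma \to 0^+$, whereas the choice $s_2$ minimal gives $\gamma$ close to $\delta := r_1\exp(\int_{r_1}^{r_0}\alpha/t\,dt)$; continuity gives the full range $(0,\delta)$. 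To repair your argument without discarding the explicit solution, you would need to prepend to $\alpha_{r_*}$ a plateau $\alpha\equiv\tau$ of adjustable length on $[r_1', r_1]$ (which is allowed by \eqref{eq:20}) and then take $r_2 = r_1'\bigl(\tau/(2-\tau)\bigr)^{1/(2c)}$: the free parameter $r_1'$ replaces the paper's free $s_2$ and reinstates the continuum.
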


Note that $r_2$ depends only on $c$ and $\tau$.

\begin{proof}
	We use the same transformation that was used in \cite{MicallefWang} to tackle the corresponding problem there. So set $r(s) := 
	r_1 e^{-\frac{s}{c}}$ and consider $\beta(s) := \alpha(r(s))$. Then the existence of $\alpha$ as claimed in the statement is 
	easily seen to be equivalent to the existence of a non-decreasing function $\beta : [0, s_2] \to [\tau, 1]$ satisfying
	\begin{equation}
		\label{eq:40}
		\beta'(s) \leq \beta(s)(2- \beta(s)),
	\end{equation}
	$\beta$ being constant on a neighborhood of $0$ with value $\tau$, and $\rline{\beta}{[s_2, \infty)} \equiv 1$, where $s_2 := s(r_2)$.
	\par
	The construction of $\beta$ is easily accomplished: Take any non-decreasing function $\beta$ matching the named side conditions together with the 
	grow restriction $\beta'(s) \leq \tau (2 - \tau)$. Then \eqref{eq:40} automatically holds simply because $x \mapsto 
	x(2-x)$, restricted to $x \in [\tau, 1]$, has a minimum at $x = \tau$.
	\par
	For the second statement observe that, since $u(r_0) = 1$, we have
	\[
		u(r) = \exp \left( - \int_{r_0}^r \frac{\alpha(t)}{t}dt\right).
	\]
	In particular for $r < r_2$
	\[
		u(r) = \frac{r_2}{r} \exp \left( \int_{r_2}^{r_1} \frac{\alpha(t)}{t} \, dt\right) \exp \left( \int_{r_1}^{r_0} 
							\frac{\alpha(t)}{t} \, dt\right)
	\]
	The identity $ \int_{r_2}^{r_1} \frac{\alpha(t)}{t}dt = \frac{1}{c} \int_0^{s_2} \beta(s) \, ds$ yields
	\[
		\gamma =  \exp \left( \int_{r_1}^{r_0} \frac{\alpha(t)}{t}dt\right) r_1 \exp \left(\frac{1}{c} \int_0^{s_2} (\beta(s) - 1) ds 
			\right)
	\]
	Thus, by suitably arranging $\beta$ (and $s_2 = s(r_2)$) any desired value for $\gamma$ in a range $(0, \delta)$, 
	$\delta := r_1 \exp \left( \int_{r_1}^{r_0} \frac{\alpha(t)}{t}dt\right)$, can be achieved.
\end{proof}


\subsection{Step 3: Smoothing of the end}

In the final step the yet unspecified parameter $\lambda \in (0, r_2)$ has to be chosen.

\begin{lemma}
	\label{lem:10}
	There exists a constant $C_2 > 0$ depending only on $(M, g)$, $V$ and $p$ so that
	\[
		\norm{\tilde{R}_M^{\lambda}(\nu, r)} \leq \frac{C_2}{\lambda}
	\]
	for $\lambda < r_0$.
\end{lemma}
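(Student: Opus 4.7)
My plan is to split the ball $V$ into three regions based on the support of the cutoff $\phi_\lambda$, estimate the curvature of $v_\lambda^2 g$ on each, and then convert to the algebraic pullback norm via bounded factors of $q$.

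On $\{r \leq \lambda/2\}$ the cutoff equals one and $v_\lambda = v$, so $v_\lambda^2 g = v^2 g$ is the flat model metric and $R_{(M, v_\lambda^2 g)} \equiv 0$. On $\{r \geq \lambda\}$ we have $\phi_\lambda = 0$, $v_\lambda = 1$, and $v_\lambda^2 g = g$, whose curvature is already uniformly bounded on the compact set $\overline{V}$. All the real work happens in the transition annulus $A_\lambda := \{\lambda/2 \leq r \leq \lambda\}$.

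In $A_\lambda$ the derivatives of $\phi_\lambda$ satisfy $\lvert \partial_r^k \phi_\lambda \rvert \leq C_k \lambda^{-k}$. A direct computation starting from $v_\lambda = \sqrt{1+\phi_\lambda(v^2-1)}$, together with the crucial fact that $v(p) = 1$ and hence $\lvert v^2 - 1 \rvert \leq C\, r$ on $V$, shows that $\lvert \nabla^g v_\lambda\rvert_g$ stays bounded and $\lvert (\nabla^g)^2 v_\lambda \rvert_g = O(1/\lambda)$ throughout $A_\lambda$: the dominant term $\partial_r^2 \phi_\lambda \cdot (v^2 - 1)$ is of size $\lambda^{-2} \cdot r \leq \lambda^{-1}$ since $r \leq \lambda$ there, and all other contributions are no worse. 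The standard conformal-change formula for curvature (a differential polynomial in $R_g$, $g^{-1}$, $\nabla \log v_\lambda$ and $\nabla^2 \log v_\lambda$), combined with $v_\lambda \geq 1/2$, then delivers $\lVert R_{(M, v_\lambda^2 g)}\rVert_g \leq C/\lambda$ on $A_\lambda$.

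Finally, since the frame $q^{-1}B(\nu,r)$ used to define the pullback $\tilde R^\lambda_M$ differs from the $g$-orthonormal frame $B(\nu,r)$ only by the bounded factor $q^{-1} \in [1/4, 4]$, the algebraic norm $\lVert \tilde R_M^\lambda(\nu, r)\rVert$ in $\algcurvop{\reals^n}$ is controlled by a fixed multiple of $\lVert R_{(M, v_\lambda^2 g)}\rVert_g$ at $\exp^{v^2g}_p(r\nu)$. Combining the three regional estimates yields the claim, with $C_2$ depending only on the geometry of $(M,g)$ near $p$ and on the choice of $V$ (which determines $v$ and the background constants). The main obstacle is producing a $1/\lambda$ bound rather than the naive $1/\lambda^2$ that one reads off from $\partial^2\phi_\lambda$ alone; this hinges on exchanging one power of $1/\lambda$ for a factor $r \leq \lambda$ coming from $v^2 - 1$, a cancellation made possible precisely by the normalization $v(p) = 1$.
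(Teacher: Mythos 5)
Your proof is correct and rests on the same decisive cancellation as the paper: the potentially dangerous contribution $\partial_r^2\phi_\lambda = \BigO(\lambda^{-2})$ is compensated by the factor $|v^2-1|\leq C r\leq C\lambda$, which is available precisely because $v(p)=1$. The one genuine technical difference is the choice of base metric for the conformal change: you scale $g$ by $v_\lambda^2$ and invoke the general conformal-curvature formula (so the intrinsic curvature $R_g$ appears and must be bounded), whereas the paper writes $v_\lambda^2 g = q^2\,(v^2g)$ as a conformal change of the \emph{flat} model metric $v^2g$, so Besse's formula $1.159\mathrm{b}$ produces an expression containing only $q$ and its derivatives, with no background curvature term at all. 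The paper's choice is slightly cleaner (no $R_g$ term, and the first-derivative bound on $q$ was already established in the proof of Proposition \ref{prop:10}), but your version is equally valid since $R_g$ is uniformly bounded on $\overline{V}$. Your explicit three-region decomposition $\{r\leq\lambda/2\}$, $A_\lambda$, $\{r\geq\lambda\}$ is also a harmless expansion of the paper's remark that only $\{r\leq\lambda\}$ needs work. One small slip to fix: $B(\nu,r)$ is $v^2g$-orthonormal, not $g$-orthonormal (it is $v\cdot B$ that is $g$-orthonormal, and $q^{-1}B$ that is $v_\lambda^2 g$-orthonormal); but since $q^{-1}B$ is $v_\lambda^2g$-orthonormal, the pullback $\tilde R^\lambda_M$ is exactly the intrinsic operator norm of $R_{(M,v_\lambda^2g)}$, which is comparable to the $g$-operator norm through the uniform bounds $\tfrac12\leq v_\lambda\leq 2$, so your final comparison step goes through unchanged.
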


\begin{proof}
	We only need to consider the region $\{ r \leq \lambda \}$, because we have $R_{(M, v_{\lambda}^2g)} = R_{(M,g)}$ outside this 
	region. Applying Theorem $1.159b$ of \cite{Besse} we get
	\[
		R_{(M, v_{\lambda}^2g)} = - q^2 \left[2v^2g \wedge \left( 
			\Hess^{v^2g}(\log(q)) - \left( d \log(q)\right)^2 + \frac{1}{2} \sqnorm{d 
			\log(q)}_{v^2g} v^2g \right) \right]
	\]
	This formula together with the rule $\norm{A \wedge B} \leq c \norm{A}\norm{B}$ for selfadjoint operators $A, B$ immediately 
	yields the desired result once we have the estimates
	\begin{equation}
		\label{eq:21}
		\norm{d v_{\lambda}}_{v^2 g} = \BigO\left(1\right), \quad
		\norm{\Hess^{v^2g} (v_{\lambda})}_{v^2g} = \BigO\left(\lambda^{-1}\right).
	\end{equation}
	The first estimate was already derived in the proof of Proposition \ref{prop:10}. For the second one we make use of the 
	identity $\Hess^{v^2g} (v_{\lambda})(X,Y) = X\left(Yv_{\lambda}\right) - \nabla^{v^2g}_XY \left( v_{\lambda}\right)$ and compute
	\begin{align*}
		\partial_r^2v_{\lambda} &= - \frac{1}{2v_{\lambda}^2} \partial_r v_{\lambda} \left( (\partial_r \phi_{\lambda}) 
						(v^2-1) + \phi_{\lambda} \partial_r (v^2-1) \right) + \\
				&\ghost{=} \, \frac{1}{2v_{\lambda}} \left( \partial_r^2 \phi_{\lambda} (v^2-1) + 2 \partial_r \phi_{\lambda} \partial_r
						(v^2-1) + \phi_{\lambda} \partial_r^2(v^2-1) \right).
	\end{align*}
	Because of $v^2-1 = \BigO(r)$, $r \leq \lambda$ and $\partial_r^k\phi_{\lambda} = \BigO\left(\lambda^{-k}\right)$ we get
	\[
		\partial^2_r v_{\lambda} = \BigO\left(\lambda^{-1}\right).
	\]
	Similarly, if $X \bot \partial_r$, by recalling that $\phi_{\lambda}$ is a radial function we obtain
	\begin{align*}
		\partial_r \left(Xv_{\lambda}\right) &= -\frac{1}{2v_{\lambda}^2} \partial_r v_{\lambda} \left( \phi_{\lambda} X(v^2-1) \right) +\\
				&\ghost{=} \frac{1}{2v_{\lambda}} \left( \partial_r \phi_{\lambda} X(v^2-1) + \phi_{\lambda} \partial_r X(v^2-1) \right).
	\end{align*}
	This gives us $\partial_r \left(Xv_{\lambda}\right) = \BigO\left(\lambda^{-1}\right)$ by the same reasoning. Finally, for $X, Y \bot 
	\partial_r$, we easily get $X\left(Yv_{\lambda}\right) = \BigO(1)$. This proves the hessian estimate in \eqref{eq:21}.
\end{proof}

For $r \leq r_2$ Proposition \ref{prop:10} yields
\[
	\tilde{R}_D(\nu, r) = u^{-2} \tilde{R}_M^{\lambda}(\nu, r) + \left(uq\right)^{-2} \left[ \frac{1}{r^2} R_{S^{n-1} \times 
			\reals} + \frac{2}{r} E^{\lambda}(\nu, r) \right],
\]
since $\alpha|_{(0, r_2]} \equiv 1$. For $r \in [\lambda, r_2]$, $\tilde{R}_D \in C$ continues to hold by Lemma \ref{lem:7}. For $r < \lambda$, again it is enough to guarantee the inclusion condition
\[
	\tilde{R}_D(\nu, r) \in \ball{\rho \frac{1}{r^2} \left(uq\right)^{-2}}
		{\frac{1}{u^2}\left(\tilde{R}_M(\nu, r) + \frac{1}{r^2q^2} R_{S^{n-1} 
			\times \reals}\right)}
\]
in order to keep the curvature tensor inside $C$. Thus by Lemma \ref{lem:10} the analogous estimate reads
\begin{align*}
	\norm{\tilde{R}_D(\nu, r) - \frac{1}{u^2} \left(\tilde{R}_M + \frac{1}{r^2 q^2} R_{S^{n-1} \times \reals}\right)} \leq
			\frac{1}{u^2} \left(\norm{\tilde{R}_M} + \frac{2C_1}{q^2r} + \frac{C_2}{\lambda} \right).
\end{align*}
This is smaller than $\rho \frac{1}{(ruq)^2}$ provided
\[
	\lambda < \min\left\{ r_2, \rho^{\frac{1}{2}}\left(48 \max \norm{\tilde{R}_M}\right)^{-\frac{1}{2}},
		\frac{\rho}{6 C_1}, \frac{\rho}{48C_2} \right\}.
\]
Choosing $\lambda$ such that these constraints are met is possible since none of the expressions of the right side depend on $\lambda$. By doing so, the third step of the deformation construction is completed, and so is the proof of Theorem \ref{thm:8}.

\end{document}